\begin{document}

\newtheorem{theorem}{Theorem}
\newtheorem{lemma}{Lemma}
\newtheorem{proposition}{Proposition}
\newtheorem{rmk}{Remark}
\newtheorem{example}{Example}
\newtheorem{exercise}{Exercise}
\newtheorem{definition}{Definition}
\newtheorem{corollary}{Corollary}
\newtheorem{notation}{Notation}
\newtheorem{claim}{Claim}

\newtheorem{dif}{Definition}

\newtheorem{thm}{Theorem}[section]
\newtheorem{cor}[thm]{Corollary}
\newtheorem{lem}[thm]{Lemma}
\newtheorem{prop}[thm]{Proposition}
\theoremstyle{definition}
\newtheorem{defn}[thm]{Definition}
\theoremstyle{remark}
\newtheorem{rem}[thm]{Remark}
\newtheorem*{ex}{Example}
\numberwithin{equation}{section}

\newcommand{\vertiii}[1]{{\left\vert\kern-0.25ex\left\vert\kern-0.25ex\left\vert #1
			\right\vert\kern-0.25ex\right\vert\kern-0.25ex\right\vert}}

\newcommand{\R}{{\mathbb R}}
\newcommand{\C}{{\mathbb C}}
\newcommand{\U}{{\mathcal U}}
\newcommand{\norm}[1]{\left\|#1\right\|}
\renewcommand{\(}{\left(}
\renewcommand{\)}{\right)}
\renewcommand{\[}{\left[}
\renewcommand{\]}{\right]}
\newcommand{\f}[2]{\frac{#1}{#2}}
\newcommand{\im}{i}
\newcommand{\cl}{{\mathcal L}}
\newcommand{\ck}{{\mathcal K}}

\newcommand{\al}{\alpha}
\newcommand{\be}{\beta}
\newcommand{\wh}[1]{\widehat{#1}}
\newcommand{\ga}{\gamma}
\newcommand{\Ga}{\Gamma}
\newcommand{\de}{\delta}
\newcommand{\ben}{\beta_n}
\newcommand{\De}{\Delta}
\newcommand{\ve}{\varepsilon}
\newcommand{\ze}{\zeta}
\newcommand{\Th}{\Theta}
\newcommand{\ka}{\kappa}
\newcommand{\la}{\lambda}
\newcommand{\laj}{\lambda_j}
\newcommand{\lak}{\lambda_k}
\newcommand{\La}{\Lambda}
\newcommand{\si}{\sigma}
\newcommand{\Si}{\Sigma}
\newcommand{\vp}{\varphi}
\newcommand{\om}{\omega}
\newcommand{\Om}{\Omega}
\newcommand{\ra}{\rightarrow}

\newcommand{\ro}{{\mathbf R}}
\newcommand{\rn}{{\mathbf R}^n}
\newcommand{\rd}{{\mathbf R}^d}
\newcommand{\rmm}{{\mathbf R}^m}
\newcommand{\rone}{\mathbb R}
\newcommand{\rtwo}{\mathbf R^2}
\newcommand{\rthree}{\mathbf R^3}
\newcommand{\rfour}{\mathbf R^4}
\newcommand{\ronen}{{\mathbf R}^{n+1}}
\newcommand{\ku}{\mathbf u}
\newcommand{\kw}{\mathbf w}
\newcommand{\kf}{\mathbf f}
\newcommand{\kz}{\mathbf z}

\newcommand{\N}{\mathbf N}

\newcommand{\tn}{\mathbf T^n}
\newcommand{\tone}{\mathbf T^1}
\newcommand{\ttwo}{\mathbf T^2}
\newcommand{\tthree}{\mathbf T^3}
\newcommand{\tfour}{\mathbf T^4}

\newcommand{\zn}{\mathbf Z^n}
\newcommand{\zp}{\mathbf Z^+}
\newcommand{\zone}{\mathbf Z^1}
\newcommand{\zz}{\mathbf Z}
\newcommand{\ztwo}{\mathbf Z^2}
\newcommand{\zthree}{\mathbf Z^3}
\newcommand{\zfour}{\mathbf Z^4}

\newcommand{\hn}{\mathbf H^n}
\newcommand{\hone}{\mathbf H^1}
\newcommand{\htwo}{\mathbf H^2}
\newcommand{\hthree}{\mathbf H^3}
\newcommand{\hfour}{\mathbf H^4}

\newcommand{\cone}{\mathbf C^1}
\newcommand{\ctwo}{\mathbf C^2}
\newcommand{\cthree}{\mathbf C^3}
\newcommand{\cfour}{\mathbf C^4}
\newcommand{\dpr}[2]{\langle #1,#2 \rangle}

\newcommand{\sn}{\mathbf S^{n-1}}
\newcommand{\sone}{\mathbf S^1}
\newcommand{\stwo}{\mathbf S^2}
\newcommand{\sthree}{\mathbf S^3}
\newcommand{\sfour}{\mathbf S^4}

\newcommand{\lp}{L^{p}}
\newcommand{\lppr}{L^{p'}}
\newcommand{\lqq}{L^{q}}
\newcommand{\lr}{L^{r}}
\newcommand{\echi}{(1-\chi(x/M))}
\newcommand{\chip}{\chi'(x/M)}

\newcommand{\wlp}{L^{p,\infty}}
\newcommand{\wlq}{L^{q,\infty}}
\newcommand{\wlr}{L^{r,\infty}}
\newcommand{\wlo}{L^{1,\infty}}

\newcommand{\lprn}{L^{p}(\rn)}
\newcommand{\lptn}{L^{p}(\tn)}
\newcommand{\lpzn}{L^{p}(\zn)}
\newcommand{\lpcn}{L^{p}(\cn)}
\newcommand{\lphn}{L^{p}(\cn)}

\newcommand{\lprone}{L^{p}(\rone)}
\newcommand{\lptone}{L^{p}(\tone)}
\newcommand{\lpzone}{L^{p}(\zone)}
\newcommand{\lpcone}{L^{p}(\cone)}
\newcommand{\lphone}{L^{p}(\hone)}

\newcommand{\lqrn}{L^{q}(\rn)}
\newcommand{\lqtn}{L^{q}(\tn)}
\newcommand{\lqzn}{L^{q}(\zn)}
\newcommand{\lqcn}{L^{q}(\cn)}
\newcommand{\lqhn}{L^{q}(\hn)}

\newcommand{\lo}{L^{1}}
\newcommand{\lt}{L^{2}}
\newcommand{\li}{L^{\infty}}
\newcommand{\beqn}{\begin{eqnarray*}}
		\newcommand{\eeqn}{\end{eqnarray*}}
\newcommand{\pplus}{P_{Ker[\cl_+]^\perp}}

\newcommand{\co}{C^{1}}
\newcommand{\coi}{C_0^{\infty}}

\newcommand{\ca}{\mathcal A}
\newcommand{\cs}{\mathcal S}
\newcommand{\cg}{\mathcal G}
\newcommand{\cm}{\mathcal M}
\newcommand{\cf}{\mathcal F}
\newcommand{\cb}{\mathcal B}
\newcommand{\ce}{\mathcal E}
\newcommand{\cd}{\mathcal D}
\newcommand{\cn}{\mathcal N}
\newcommand{\cz}{\mathcal Z}
\newcommand{\crr}{\mathbf R}
\newcommand{\cc}{\mathcal C}
\newcommand{\ch}{\mathcal H}
\newcommand{\cq}{\mathcal Q}
\newcommand{\cp}{\mathcal P}
\newcommand{\cx}{\mathcal X}
\newcommand{\ci}{\mathcal I}
\newcommand{\cj}{\mathcal J}
\newcommand{\eps}{\epsilon}

\newcommand{\pv}{\textup{p.v.}\,}
\newcommand{\loc}{\textup{loc}}
\newcommand{\intl}{\int\limits}
\newcommand{\iintl}{\iint\limits}
\newcommand{\dint}{\displaystyle\int}
\newcommand{\diint}{\displaystyle\iint}
\newcommand{\dintl}{\displaystyle\intl}
\newcommand{\diintl}{\displaystyle\iintl}
\newcommand{\liml}{\lim\limits}
\newcommand{\suml}{\sum\limits}
\newcommand{\ltwo}{L^{2}}
\newcommand{\supl}{\sup\limits}
\newcommand{\df}{\displaystyle\frac}
\newcommand{\p}{\partial}
\newcommand{\Ar}{\textup{Arg}}
\newcommand{\abssigk}{\widehat{|\si_k|}}
\newcommand{\ed}{(1-\p_x^2)^{-1}}
\newcommand{\tT}{\tilde{T}}
\newcommand{\tV}{\tilde{V}}
\newcommand{\wt}{\widetilde}
\newcommand{\Qvi}{Q_{\nu,i}}
\newcommand{\sjv}{a_{j,\nu}}
\newcommand{\sj}{a_j}
\newcommand{\pvs}{P_\nu^s}
\newcommand{\pva}{P_1^s}
\newcommand{\cjk}{c_{j,k}^{m,s}}
\newcommand{\Bjsnu}{B_{j-s,\nu}}
\newcommand{\Bjs}{B_{j-s}}
\newcommand{\Ly}{L_i^y}
\newcommand{\dd}[1]{\f{\partial}{\partial #1}}
\newcommand{\czz}{Calder\'on-Zygmund}
\newcommand{\chh}{\mathcal H}

\newcommand{\lbl}{\label}
\newcommand{\beq}{\begin{equation}}
		\newcommand{\eeq}{\end{equation}}
\newcommand{\beqna}{\begin{eqnarray*}}
		\newcommand{\eeqna}{\end{eqnarray*}}
\newcommand{\bp}{\begin{proof}}
		\newcommand{\ep}{\end{proof}}
\newcommand{\bprop}{\begin{proposition}}
		\newcommand{\eprop}{\end{proposition}}
\newcommand{\bt}{\begin{theorem}}
		\newcommand{\et}{\end{theorem}}
\newcommand{\bex}{\begin{Example}}
		\newcommand{\eex}{\end{Example}}
\newcommand{\bc}{\begin{corollary}}
		\newcommand{\ec}{\end{corollary}}
\newcommand{\bcl}{\begin{claim}}
		\newcommand{\ecl}{\end{claim}}
\newcommand{\bl}{\begin{lemma}}
		\newcommand{\el}{\end{lemma}}
\newcommand{\dea}{(-\De)^\be}
\newcommand{\naa}{|\nabla|^\be}

\title[Ground states for the Ostrovskyi equation ]
{On the ground states of the   Ostrovskyi equation and their stability}

\author[Iurii  Posukhovskyi]{Iurii Posukhovskyi }
\address{ Department of Mathematics,
	University of Kansas,
	1460 Jayhawk Boulevard,  Lawrence KS 66045--7523, USA}

\email{i.posukhovskyi@ku.edu}

\thanks{ Posukhovskyi is partially supported from a graduate fellowship by NSF-DMS under grant  \# 1614734.   Stefanov    is partially  supported by  NSF-DMS under grants  \# 1614734 and \# 1908626.}

\author[Atanas Stefanov]{\sc Atanas Stefanov}
\address{ Department of Mathematics,
	University of Kansas,
	1460 Jayhawk Boulevard,  Lawrence KS 66045--7523, USA}
\email{stefanov@ku.edu}

\subjclass[2010]{Primary 35Q55, 35 Q51, 35 L 60}

\keywords{ground states, Ostrovskyi equation, stability}

\date{\today}

\begin{abstract}
	The  Ostrovskyi (Ostrovskyi-Vakhnenko/short pulse)  equations are ubiquitous models  in mathematical physics. They describe water waves under the action of a Coriolis force as well as the amplitude of a ``short'' pulse in an optical fiber.

	In this paper, we rigorously construct  ground  traveling waves for  these models as  minimizers of  the Hamiltonian functional for any  fixed $L^2$ norm. The existence argument proceeds via the method of compensated compactness, but it requires  surprisingly detailed  Fourier analysis arguments to rule out the non-vanishing  of the limits of the minimizing sequences. We show that  all  of these  waves are  weakly non-degenerate and  spectrally stable. 
\end{abstract}

\maketitle
\section{ Introduction and main results}

The Ostrovskyi model, which is ubiquitous in the modern water waves theory,    is given by, 
\begin{equation}
	\label{20}
	(u_t - u_{xxx}-(u^2)_x)_x=u.
\end{equation}
The related, generalized Ostrovskyi/Vakhnenko/short pulse equation  is the corresponding equation with cubic nonlinearity
\begin{equation}
	\label{30}
	(u_t - u_{xxx}-(u^3)_x)_x=u.
\end{equation}
These models have attracted a lot of attention in the last thirty years,  as models of water waves under the action of a Coriolis force, \cite{O78, OS90, GOS},  as well as  the amplitude of a ``short'' pulse in an optical fiber, \cite{SW}.
In this paper, we shall be interested in the dynamics of a family of problems, which contains these two. More specifically, we consider the following generalized Ostrovskyi models
\begin{eqnarray}
	\label{ostrovskis}
	\left( u_t -  u_{xxx} - (|u|^p)_x\right)_x = u\\
	\label{10}
	\left( u_t -  u_{xxx} - (|u|^{p-1}u)_x\right)_x = u
\end{eqnarray}
Clearly, \eqref{ostrovskis}, in the case $p=2$ is nothing but \eqref{20}, while \eqref{10}, for $p=3$ is \eqref{30}. 
Let us comment on  the seemingly more general form of the equations that appear in other publications, 
\begin{equation}
\label{r:10}
 \left( u_t -  \be u_{xxx} - \si (|u|^p)_x\right)_x = \ga u, \  \ (x,t) \in  \rone \times \rone 
\end{equation}
Using the scaling transformations $t\to a t, x\to b x, u\to c u$, we obtain the equivalent problem 
$$
 \left( u_t -  \f{\be  b^3}{a}  u_{xxx} - \f{\si  b |c|^{p-2}c }{a} (|u|^p)_x\right)_x =  \f{\ga}{a b} u
$$
which means that by choosing $a,b,c$ appropriately, we may scale all the coefficients to plus or minus 
one, just as in \eqref{ostrovskis}. In addition, by a judicious choice of  the signs of $a,b,c$, one concludes that all systems 
in the form \eqref{r:10} reduce to   
$$
(u_t - \eps_1 sgn(\be)  u_{xxx} - \eps_2   (|u|^p)_x)_x= sgn(\ga) \eps_1 u
$$
 where  $\eps_1, \eps_2  \in \{-1,1\}$.   In this  work, we stick to  the case\footnote{It is well-known that solitary waves do not exists in the case when $sgn(\be)\neq sgn(\ga)$}   $sgn(\be)= sgn(\ga)$. In this case, an appropriate further rescale leads us to 
\begin{equation}
\label{r:20} 
(u_t -   u_{xxx} -  \eps (|u|^p)_x)_x=  u.
\end{equation}
 Thus, our model, \eqref{ostrovskis},  covers the cases for which $\eps=1$.  
 
Let us record another, mostly equivalent formulation of \eqref{ostrovskis} and \eqref{10}. Using $ u = v_x $ in \eqref{ostrovskis} and integrating once (by tacitly assuming that $v, v_x$ vanishes at $\pm \infty$), we  get
\begin{align}\label{o_vx}
	\begin{split}
		(v_t -  v_{xxx} - (|v_x|^p))_x =  v,\\
		(v_t -  v_{xxx} - (|v_x|^{p-1}v_x))_x =  v.
	\end{split}
\end{align}
Regarding local and global  well-posedness for these models, most of the theory has been developed for standard  quadratic and cubic models \eqref{20}, \cite{GL, SW, Ts, VL, LM, Psak}.  Extensive further references to earlier works can be found  in \cite{Ts, VL}. Break up in finite time is shown in various situations in \cite{LPS}. 

The main purpose of this paper is the study of traveling wave solutions, namely functions  in the form $ \phi(x - \om t) $. More specifically, plugging in this ansatz in \eqref{o_vx}   turns it into the profile equation
\begin{align}
	\label{o_profile}
	\begin{split}
		\phi''''+\om \phi''  + \phi + (|\phi'|^p)' = 0, \\
		\phi'''' +  \om \phi''+   \phi+(|\phi'|^{p-1}\phi')' = 0.
	\end{split}
\end{align}
These are fourth order nonlinear ODE's, for which there  is not very well-developed theory. In particular, for non-integer values of $p$, existence has been proved by variational methods, \cite{LL6, LL7, L12}, so that  \eqref{o_profile} is an Euler-Lagrange equation for these constrained minimizers.  Regarding uniqueness, which is well-known to be a hard issue\footnote{even for second order problems of this type}  is only known in the case $p=2$.
This is the main result of  \cite{ZL}, where  it is shown that  localized solutions are unique, together with some asymptotic decay properties of $\phi$ and its derivatives.  Note that the result obtained there rely heavily on the quadratic nonlinearity as well as the precise structure of the equation.   We provide an independent analysis of the elliptic profile equations, \eqref{o_profile} and we also compute, what we believe,  are the sharp spatial exponential rate of decay, see Proposition \ref{prop:exp} below.

Our approach to \eqref{o_profile} is variational, but rather different than the works \cite{L12, LL6, LL7}. More precisely, Levandosky and Liu construct their waves as minimizers of energy, subject to a fixed $L^{p+1}$ norm.
This method  allows for a construction of waves for   any  power of $p >1$.  As shown therein, some of these waves, for large enough $p$,  are spectrally unstable. On the other hand, our goal is to construct the so-called normalized waves - that is, we construct the waves to minimize energy, by keeping their $L^2$ norm fixed.   As we show later in the paper, see Theorem \ref{theo:5}, this imposes restrictions on $p$, but the result is that all of these waves are necessarily spectrally stable. We state our results below, starting with the
existence, and then proceeding onto the stability.
\subsection{Existence of the normalized waves}
Before we present our set up for the existence results, we introduce some basic notations. We use the standard definition of norms in $L^p$  spaces. The Fourier transform and its inverse will be in the form
$$
\hat{u}(\xi)=\f{1}{\sqrt{2\pi}} \int_{\rone} u(x) e^{- i x\xi} dx, \ \ u(x)=\f{1}{\sqrt{2\pi}} \int_{\rone} \hat{u}(\xi) e^{ i x\xi} d\xi.
$$
Consequently, we define all Sobolev norms via $\|f\|_{H^k}=\sum_{j=0}^k \|\p_x^j f\|_{L^2}$ 
for integers $k$, while for values $\al\in {\mathbf R}$ (where $\al$ is not necessarily positive either), we may take the formula
$$
\|f\|_{H^\al}=\left(\int_{\rone} (1+|\xi|^2)^\al |\hat{f}(\xi)|^2d\xi\right)^{1/2}.
$$
Note that for integer $\al$ the two formulations are equivalent. We shall also need the  homogeneous versions of $\dot{H}^\al$, which are defined via the semi-norms
$$
\|f\|_{\dot{H}^\al}=\left(\int_{\rone} |\xi|^{2\al} |\hat{f}(\xi)|^2d\xi\right)^{1/2}. 
$$
Alternatively, a space that will play a role in the arguments below is $\dot{H}^{-1}$, which may be defined as a subspace  of distributions via 
$$
\dot{H}^{-1}=\{f\in \cs'(\rone): f=u_x, \|f\|_{\dot{H}^{-1}}=\|u\|_{L^2}\}.
$$
Similarly, $\dot{H}^{-2}=\{f=u_{xx}: \|f\|_{H^{-2}}=\|u\|_{L^2}\}$ and so on. 
Let us now   introduce the  concrete functionals that we work with, namely
$$
	I[u] = \frac{1}{2} \int_\rone  |u''|^2 +|u|^2dx -\frac{1}{p+1}\int_\rone |u_x|^p u_x dx; \
	J[v] = \frac{1}{2} \int_\rone  |v_{xx}|^2 +|v|^2dx -\frac{1}{p+1}\int_\rone |v_x|^{p+1} dx,
$$
and their variants
$$
	\ci[u] = \frac{1}{2} \int_\rone  |u'|^2 +|\p_x^{-1} u|^2dx -\frac{1}{p+1}\int_\rone |u|^p u dx; \
	\cj[v] = \frac{1}{2} \int_\rone  |v_{x}|^2 +|\p_x^{-1} v|^2 dx -\frac{1}{p+1} \int_\rone |v|^{p+1} dx,
$$
Note $I[u]=\ci[u']$ and $J[v]=\cj[v']$.  For every $\la>0$, we consider the variational problems
\begin{equation}
	\label{40}
	\left\{
	\begin{split}
		I [u]=\frac{1}{2} \int_\rone  |u''|^2 +|u|^2dx -\frac{1}{p+1}\int_\rone |u_x|^p u_x dx \to \min \\
		\int_{\rone} |u'(x)|^2 dx=\la
	\end{split}
	\right.
\end{equation}
\begin{equation}
	\label{41}
	\left\{
	\begin{split}
		\ci [u]=\frac{1}{2} \int_\rone  |u'|^2 +|\p_x^{-1} u|^2dx -\frac{1}{p+1}\int_\rone |u|^p u dx   \to \min \\
		u\in \dot{H}^{-1}, \ \   \int_{\rone} |u(x)|^2 dx=\la
	\end{split}
	\right.
\end{equation}
and
\begin{equation}
	\label{50}
	\left\{
	\begin{split}
		J[v] = \frac{1}{2} \int_\rone  |v''|^2 +|v|^2dx -\frac{1}{p+1}\int_\rone |v_x|^{p+1} dx \to \min \\
		\int_{\rone} |v'(x)|^2 dx=\la
	\end{split}
	\right.
\end{equation}
\begin{equation}
	\label{51}
	\left\{
	\begin{split}
		\cj[v] =  \frac{1}{2} \int_\rone  |v_{x}|^2 +|\p_x^{-1} v|^2 dx -\frac{1}{p+1} \int_\rone |v|^{p+1} dx\to \min \\
		v\in \dot{H}^{-1},\ \  \int_{\rone} |v(x)|^2 dx=\la
	\end{split}
	\right.
\end{equation}
Our existence results are as follows.
\begin{theorem}[Existence of solitary waves]
	\label{theo:5}
	Let $\la>0$. Then,
	\begin{itemize}
		\item For $1<p<3$,   the constrained minimization problems \eqref{40} and \eqref{41}  have  solutions $\vp_\la$ and $\phi_\la$.  In addition, $\phi_\la\in H^2\cap \dot{H}^{-2}(\rone), \vp_\la\in H^4(\rone):   \vp_\la'=\phi_\la$  and they satisfy, for some $\om=\om_\la \in (-\infty, 2)$, 
		      \begin{eqnarray*}
			      & & 	\p_x^2 \phi_\la+\p_x^{-2} \phi_\la+\om \phi_\la+|\phi_\la|^{p}=0, \\
			      & & 	\vp_\la''''+\om \vp_\la''+\vp_\la+(|\vp_\la'|^p)'=0.
		      \end{eqnarray*}
		      respectively.   The waves $\vp_\la, \phi_\la$ are exponentially decaying, together with their derivatives, in fact
		      \begin{equation}
			      \label{dec}
			      |\vp_\la(x)|+|\phi_\la(x)|+|\phi'_\la(x)|\leq C e^{-\ka_{\om} |x|}, k_{\om}:=\left\{\begin{array}{cc}
				      \f{\sqrt{2-\om}}{2}                   & \om \in (-2,2), \\
				      \sqrt{\f{-\om-\sqrt{\om^2-4}}{2}} & \om <-2
			      \end{array}
			      \right.
		      \end{equation}

		\item For $1<p<5$, the minimization problems \eqref{50} and \eqref{51} and  have constrained minimizers, $\vp_\la\in H^4(\rone), \phi_\la\in H^2\cap \dot{H}^{-2}(\rone):\vp_\la'=\phi_\la$,   which satisfy the
		      \begin{eqnarray*}
			      & & 	\p_x^2 \phi_\la+\p_x^{-2} \phi_\la+\om \phi_\la+|\phi_\la|^{p-1} \phi_\la=0, \\
			      & & 	\vp_\la''''+\om \vp_\la''+\vp_\la+(|\vp_\la'|^{p-1} \vp'_\la)'=0.
		      \end{eqnarray*}
		      The waves $\vp_\la, \phi_\la$ satisfy \eqref{dec} and similar statements hold for the waves $U$. 
	\end{itemize}

\end{theorem}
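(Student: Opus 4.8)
The plan is to obtain all the waves as constrained minimizers via concentration--compactness, after reducing the four problems to two. Since $I[u]=\ci[u']$ and $J[v]=\cj[v']$, the map $u\leftrightarrow\partial_x u$ identifies \eqref{40} with \eqref{41} and \eqref{50} with \eqref{51}: a feasible $u\in H^2$ for \eqref{40} has $\phi:=u'\in H^1$ and, since $u\in L^2$, also $\phi\in\dot H^{-1}$, while conversely any $\phi$ in $X:=\dot H^1\cap\dot H^{-1}$ lifts to such a $u$; note $X\hookrightarrow H^1$ (split at $|\xi|=1$). So I would work at the ``$\phi$--level'', minimizing $\ce[\phi]:=\tfrac12\big(\norm{\phi'}_{L^2}^2+\norm{\phi}_{\dot H^{-1}}^2\big)-\tfrac1{p+1}\int_\R N(\phi)\,dx$ over $\phi\in X$ with $\norm{\phi}_{L^2}^2=\la$, where $N(\phi)=|\phi|^p\phi$ (the ``$(|u|^p)_x$'' family, $1<p<3$) or $N(\phi)=|\phi|^{p+1}$ (the ``$(|u|^{p-1}u)_x$'' family, $1<p<5$). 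Two elementary inputs are used throughout: the identity $\norm{\phi'}_{L^2}^2+\norm{\phi}_{\dot H^{-1}}^2=\int_\R(\xi^2+\xi^{-2})|\wh\phi|^2\,d\xi\ge 2\norm{\phi}_{L^2}^2=2\la$, and Gagliardo--Nirenberg $\int_\R|\phi|^{p+1}\,dx\le C\norm{\phi'}_{L^2}^{(p-1)/2}\norm{\phi}_{L^2}^{(p+3)/2}$. Since $|N(\phi)|\le|\phi|^{p+1}$ and $(p-1)/2<2$ when $p<5$, the latter lets the nonlinear term be absorbed by $\tfrac12\norm{\phi'}_{L^2}^2$ up to an additive constant, so $m(\la):=\inf\ce>-\infty$.

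Next I would take a minimizing sequence $\{\phi_n\}\subset X$ with $\norm{\phi_n}_{L^2}^2=\la$, which by the above is bounded in $X\subset H^1$, and feed $|\phi_n|^2$ into Lions' dichotomy. Splitting of mass is ruled out by strict subadditivity $m(\la)<m(\al)+m(\la-\al)$ for $0<\al<\la$, which one reads off from the behaviour of $\ce$ along the $L^2$--preserving dilations $\phi\mapsto\mu^{1/2}\phi(\mu\cdot)$ once it is known that $m(\la)<\la$ for all $\la>0$. The essential point is to exclude \emph{vanishing}: if $\sup_y\int_{|x-y|\le1}|\phi_n|^2\,dx\to0$ then, by boundedness in $H^1$, $\phi_n\to0$ in $L^{p+1}$, so $\ce[\phi_n]\to\liminf\tfrac12\norm{\phi_n}_X^2\ge\la$ and $m(\la)\ge\la$. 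Thus everything turns on the strict inequality $m(\la)<\la$, i.e.\ on exhibiting a feasible $\phi$ with $\ce[\phi]<\la$.

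This is the step I expect to be the main obstacle, and it is where the two families diverge. For the ``$|\phi|^{p+1}$'' family it is easy: a slowly modulated carrier $\phi(x)=c_\ve\cos(x)\Phi(\ve x)$, with $c_\ve$ fixed by $\norm{\phi}_{L^2}^2=\la$, has $\wh\phi$ concentrated at $\xi=\pm1$ so that $\norm{\phi}_X^2=2\la+O(\la\ve^2)$, while $\int_\R|\phi|^{p+1}\sim\la^{(p+1)/2}\ve^{(p-1)/2}$ because $\int_0^{2\pi}|\cos\theta|^{p+1}\,d\theta>0$; hence $\ce[\phi]-\la\sim C\la\ve^2-C'\la^{(p+1)/2}\ve^{(p-1)/2}<0$ for $\ve$ small, using $(p-1)/2<2$, i.e.\ $p<5$. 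For the ``$|\phi|^p\phi$'' family that carrier is useless, since $\int_0^{2\pi}|\cos\theta|^p\cos\theta\,d\theta=0$ renders $\int_\R|\phi|^p\phi$ negligible compared with the loss $\norm{\phi}_X^2-2\la=O(\la\ve^2)$; one is therefore forced to build a genuinely multi--scale competitor --- e.g.\ a superposition of two modulated carriers at resonant frequencies, or a carrier together with a carefully tuned low--frequency $X$--correction --- and to estimate the resulting quadratic form against the nonlinear gain in frequency variables. This is the delicate Fourier analysis of the introduction, and keeping the gain above the loss for \emph{all} $\la>0$ is precisely what restricts this family to $1<p<3$.

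Once vanishing and dichotomy are excluded, the compactness alternative gives (after translations) $\phi_n\to\phi_\la$ strongly in $L^2$ and weakly in $X$; interpolation with the $H^1$ bound promotes this to $L^{p+1}$--convergence, so $N(\phi_n)\to N(\phi_\la)$, and weak lower semicontinuity of $\norm{\cdot}_X^2$ together with $\norm{\phi_\la}_{L^2}^2=\la$ give $\ce[\phi_\la]\le m(\la)$, whence $\phi_\la$ is a minimizer. The Lagrange multiplier rule then produces $\phi_\la''+\partial_x^{-2}\phi_\la+\om\phi_\la+|\phi_\la|^{p}=0$ (resp.\ with $|\phi_\la|^{p-1}\phi_\la$); pairing with $\phi_\la$ and eliminating the nonlinear integral via $\ce[\phi_\la]=m(\la)$ gives $\om\la=(p+1)m(\la)-\tfrac{p-1}{2}\norm{\phi_\la}_X^2<(p+1)m(\la)-(p-1)\la\le2\la$, using $\norm{\phi_\la}_X^2>2\la$ and $m(\la)\le\la$, so $\om<2$. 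Setting $\vp_\la:=\partial_x^{-1}\phi_\la$ and differentiating recovers the fourth--order profile equation. Finally, rewriting the profile equation as $(\xi^4-\om\xi^2+1)\wh{\phi_\la}(\xi)=\xi^2\,\wh{|\phi_\la|^{p}}(\xi)$ (resp.\ with $|\phi_\la|^{p-1}\phi_\la$) and observing that $t\mapsto t^2-\om t+1>0$ on $t\ge0$ exactly when $\om<2$, so the multiplier $\xi^2/(\xi^4-\om\xi^2+1)$ is bounded and $O(\min(\xi^2,\xi^{-2}))$, a short bootstrap yields $\phi_\la\in H^2\cap\dot H^{-2}$ and $\vp_\la\in H^4$; and the Green's function of $\partial_x^4+\om\partial_x^2+1$, whose decay exponent is the distance from $i\R$ to the zero set of $\kappa^4+\om\kappa^2+1$ --- namely $\kappa_\om$ in \eqref{dec} --- yields, after a further bootstrap of the integral equation, the claimed exponential decay of $\phi_\la,\phi_\la'$ and $\vp_\la$. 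The second bullet ($1<p<5$) is exactly the ``easy'' branch of the argument above, and the remaining assertions follow the same scheme.
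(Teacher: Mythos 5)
Your architecture is essentially the paper's: reduce to the $\phi$-level via $u\leftrightarrow\partial_x u$, get coercivity from Gagliardo--Nirenberg, run concentration--compactness, rule out vanishing by showing $m(\la)<\la$, rule out dichotomy by strict subadditivity, derive the Euler--Lagrange equation, get $\om<2$ from pairing with $\phi_\la$ (the paper uses the Pohozaev identities here, but your direct pairing is equivalent), and obtain regularity and exponential decay from the Green's function of $\partial_x^4+\om\partial_x^2+1$. However, there is one genuine gap, and it sits exactly at the step you yourself flag as ``the main obstacle'': for the sign-indefinite nonlinearity $N(\phi)=|\phi|^p\phi$ (the $1<p<3$ branch, i.e.\ problems \eqref{40}--\eqref{41}), you never actually produce a competitor with $\ce[\phi]<\la$. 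You correctly observe that the single carrier $\cos(x)\,\chi(\eps x)$ fails because $\int_0^{2\pi}|\cos\theta|^p\cos\theta\,d\theta=0$, and you correctly guess that a two-frequency competitor is needed, but you stop there. Since the exclusion of vanishing, the strict subadditivity, \emph{and} the inequality $\om<2$ all hinge on $m_\ci(\la)<\la$, the entire first bullet of the theorem is unproven as written.

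For the record, the paper's resolution is the ansatz $v_{I,\eps}=2\sqrt{\eps}\,\chi(\eps x)\bigl(\cos x+\eps^{\al}\cos 2x\bigr)$ with $\max\bigl(\f{p-1}{2},\f{2}{p+1}\bigr)<\al<1$ (possible only for $p<3$, which is where that restriction enters). The quadratic loss is then $O(\eps^{2\al})$, and the point is to show the nonlinear gain is $\gtrsim\eps^{\f{p-1}{2}+\al}\ll\eps^{2\al}$ in the favorable direction; this reduces, after expanding $|\cos x+\eps^\al\cos 2x|^p$ away from the zeros of $\cos$, to proving a lower bound $\int\chi^{p+1}(\eps x)\cos(2x)|\cos x|^p\,dx\geq C\eps^{-1}$, which the paper obtains by pairing the eight subintervals of each period so that $\cos(2x)(|\cos x|^p-|\sin x|^p)\geq 0$ and then invoking the sampling Lemma \ref{portion}. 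None of this is routine, and it cannot be waved at: if one tries, say, two carriers at non-resonant frequencies, or a generic low-frequency correction, the cross term integrates to zero at leading order and the argument collapses. A secondary, smaller issue: strict subadditivity cannot be ``read off from the $L^2$-preserving dilations $\phi\mapsto\mu^{1/2}\phi(\mu\cdot)$'', since those preserve the constraint and so cannot relate $m(\al)$ to $m(\la)$; the paper instead uses amplitude scaling $u\mapsto cu$ to show $m(\la)/\la$ is strictly decreasing, and the strictness again requires knowing that the nonlinear term stays bounded away from zero along minimizing sequences --- which is the same missing estimate.
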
 
\begin{itemize}
	\item The waves  $\vp_\la, \phi_\la$, which  are constructed to  satisfy the Euler-Lagrange equation  in a weak sense, are actually smoother solutions, see Proposition  \ref{prop:nm} below.
	\item The waves satisfy the decay bounds \eqref{dec} hold whenever one has a weak solution of \eqref{o_profile}, see Proposition \ref{prop:exp}. This result matches the results in Zhang-Liu, \cite{ZL}, see Lemma 3.2 and Remark 3.1, p. 824,   for the case $\om<-2$.  For the case $\om \in (-2,2)$, the  new bound \eqref{dec} provides the sharp rate of decay for the solitary waves.   
	\item Our method does not provide a definite answer to the following important question - does there exist a wave for each value of $\om \in (-\infty,2)$? Recall that this is due to the fact that we give ourselves a $\la$, which in turn produces $\om_\la\in (-\infty,2)$. It remains unclear though whether any arbitrary $\om\in (-\infty,2)$ corresponds to such $\la>0$.  
	\item If one knows {\it a priori} that the Euler-Lagrange equation has unique solution, for a fixed value of $\om=\om_\la$, then this solution is of course exactly the limit wave at that $\la$.  As we have mentioned above, the uniqueness is only known   for the case $p=2$ in \cite{ZL}.
\end{itemize}
Let us again point out that in \cite{LL6, LL7, L12}, the authors have constructed traveling waves for values of $p$ beyond the range of Theorem \ref{theo:5}, due to the  use an alternative variational approach.

\subsection{Stability results}
We start by describing in detail the state of the art, regarding the stability of the Ostrovsky  waves. For the reduced Ostrovsky case, that is the model without the dispersive term $u_{xxx}$ and with quadratic or cubic non-linearities, much is known, as the model is in some sense completely integrable. A full description of its  periodic waves as well their stability can be found in the recent papers, \cite{GP, JP}.  

For the full Ostrovsky model under consideration,   Liu and Ohta, \cite{LO8} and by a slightly different method, Liu, \cite{LO7} have established the orbital stability for the classical Ostrovsky's equation (i.e. $p=2$) for large  speeds.    Another,  set stability  result, sometimes referred to as weak orbital stability,   is given in \cite{LV04}.
In the works, \cite{LL6}, \cite{LL7}, Levandosky and Liu have constructed the waves for the generalized problems and they have shown that their orbital stability is reduced to the convexity of certain scalar functions, a la Grillakis-Shatah-Strauss. In  \cite{L12}, Levandosky obtained rigorously the orbital stability of the waves near some bifurcation points. In addition, he has launched an impressive numerical study, which was our main motivation for this work.

In order to state our stability results, we need to introduce the linearized operators as well. Namely, for a traveling wave $\phi$, solving either one of the elliptic equations in \eqref{o_profile}, set
$ u(t,x) = \phi(x-\om t) + v(t,x-\om t) $ into  \eqref{o_vx}. After  ignoring  $ O(v^2) $ terms we get
\begin{equation}\label{perturb}
	(v_x)_t  -v_{xxxx}-\om v_{xx} -v - p(|\phi'|^{p-2}\phi' v_x)_x = 0.
\end{equation}
Setting the stability ansatz $ v(t,x) = e^{t \mu} z(x) $ in  \eqref{perturb},  we obtain the eigenvalue problem  in the form
\begin{equation}
	\label{318}
	L_+ z =\mu  \p_x z, \ \  L_+=\p_{xxxx}+\om \p_{xx}+1 + p \p_x(|\phi'|^{p-2}\phi'\p_x(\cdot)).
\end{equation}
Clearly, $(L_+, D(L_+))=H^4(\rone)$ is unbounded, but self-adjoint operator on $L^2(\rone)$. Spectral instability here is understood as the existence of a non-trivial pair $(\mu, z): \Re \mu>0, z\neq 0, z\in D(L_+)$, so that \eqref{318} is satisfied. Spectral stability means non-existence of such pair.

The eigenvalue problem \eqref{318} is a non-standard one, although problems in this form were  recently considered  in the literature. An equivalent formulation, which is technically more convenient for our approach is the following: write $L_+=-\p_x \cl_+ \p_x$, where $ \cl_+ = -\p_x^{2} - \om- \p_x^{-2}   - p |\phi'|^{p-2}\phi' $,   $D(\cl_+)=H^2(\rone)\cap \dot{H}^{-2}(\rone)$. 

In terms of the new operator $\mu z_x=-\p_x \cl_+ z_x$. Since the function spaces require vanishing at both infinities, this is equivalent to $\mu z=- \cl_+ \p_x z$ or $\mu$ is an eigenvalue for $-\cl_+ \p_x$. Equivalently, $\bar{\mu}$ is an eigenvalue for the adjoint $\p_x \cl_+=(-\cl_+\p_x)^*$ or $\p_x \cl_+ z=\bar{\mu} z$. Clearly, since $\Re\mu=\Re\bar{\mu}$, the spectral stability of the traveling wave $\phi(x-\om t)$ is equivalent to  the following eigenvalue problem 
\begin{equation}
\label{319}
\p_x \cl_+ z=\la z
\end{equation}
not having non-trivial solutions $(\la, z), \Re \la>0, z\neq 0$. 

 For future reference, we introduce two notions of non-degeneracy of $\phi$, which are generically expected to hold. 
\begin{definition}
	\label{defi:ol} We say that the wave $\phi$ is non-degenerate, if $Ker[\cl_+]=span[\phi']$. 
We say that the wave $\phi$ is weakly non-degenerate, if $\phi\perp Ker[\cl_+]$. 
\end{definition}
\noindent {\bf Remark:} Clearly a non-degenerate wave is weakly non-degenerate, but not vice versa. For weakly non-degenerate waves, the important quantity $\cl_+^{-1} \phi$ is well-defined. 
\begin{theorem}
	\label{theo:10}
	Let  $1< p<3$ and  $\la>0$.  Then, the constrained minimizers  $\phi_\la$ for \eqref{40}, constructed in Theorem \ref{theo:5}  are weakly non-degenerate in the sense of Definition \ref{defi:ol}. In particular, $\cl_+^{-1} \phi_\la$ is well-defined. Under the condition 
		\begin{equation}
		\label{nond} 
		\dpr{\cl_+^{-1} \phi}{\phi}\neq 0, 
		\end{equation}
the waves $\phi_\la$ are   spectrally stable.  That is, the eigenvalue problem 
 \eqref{319} does not support  non-trivial solutions $(\la, z): \Re \la>0, z\neq 0$. 
 Similar result holds for limit waves of \eqref{40}.

	For $1< p<5$ and $\la>0$,  let $\phi_\la$ be a constrained minimizer for  \eqref{51}. 
	Then, again $\phi_\la$ are weakly non-degenerate and  under \eqref{nond}, they are spectrally stable as well.  Same statement applies to the limit waves. 
\end{theorem}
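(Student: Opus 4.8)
The plan is to reduce everything to the spectral theory of the self-adjoint operator $\cl_+$, exploiting that $\phi_\la$ is a constrained \emph{minimizer}, not just a critical point. First I record the structural facts about $\cl_+=-\p_x^2-\om-\p_x^{-2}-p|\phi_\la|^{p-2}\phi_\la$ (for \eqref{40}/\eqref{41}; in the \eqref{50}/\eqref{51} problems the last term is $-p|\phi_\la|^{p-1}$). Differentiating the profile equation in $x$ gives $\cl_+\phi_\la'=0$, so $\phi_\la'\in Ker[\cl_+]$; pairing the profile equation with $\phi_\la$ gives $\cl_+\phi_\la=(1-p)|\phi_\la|^{p-1}\phi_\la$ in the \eqref{51} case and $(1-p)|\phi_\la|^{p}$ in the \eqref{40} case, so $\dpr{\cl_+\phi_\la}{\phi_\la}<0$ for $p>1$ — in the \eqref{40} case one first fixes the sign of $\phi_\la$ so that $\int_\rone|\phi_\la|^p\phi_\la\,dx>0$, which minimality forces because the minimum value is negative. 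By \eqref{dec} the wave decays exponentially, so Weyl's theorem gives $\sigma_{ess}(\cl_+)=[2-\om,\infty)\subset(0,\infty)$ (here $\om<2$ by Theorem \ref{theo:5}); thus $\cl_+$ is bounded below, $0$ is isolated in its spectrum, and $\cl_+$ has only finitely many non-positive eigenvalues, each of finite multiplicity. Finally the second-order necessary condition for the constrained minimization — which, after $u=v'$ in the \eqref{40}/\eqref{50} formulations, is minimization of $\ci$ (resp.\ $\cj$) at fixed $\|u\|_{L^2}^2=\la$ — reads $\dpr{\cl_+ h}{h}\ge0$ for all $h\perp\phi_\la$; with $\dpr{\cl_+\phi_\la}{\phi_\la}<0$ this pins $n(\cl_+)=1$.

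For weak non-degeneracy, take $\psi\in Ker[\cl_+]$ and restrict the quadratic form $u\mapsto\dpr{\cl_+ u}{u}$ to $V=\mathrm{span}\{\phi_\la,\psi\}$: since $\dpr{\cl_+\phi_\la}{\psi}=\dpr{\phi_\la}{\cl_+\psi}=0$, on $a\phi_\la+b\psi$ the form equals $a^2\dpr{\cl_+\phi_\la}{\phi_\la}$, hence is negative off $\mathrm{span}\{\psi\}$. If $\dpr{\psi}{\phi_\la}\ne0$, the line $\{h\in V:\,h\perp\phi_\la\}$ meets $\mathrm{span}\{\psi\}$ only at $0$, so the form is strictly negative somewhere on $\{\phi_\la\}^\perp$, contradicting the second-order inequality. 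Hence $\phi_\la\perp Ker[\cl_+]$, and since $0$ is isolated, $\cl_+^{-1}\phi_\la$ is well-defined on $(Ker[\cl_+])^\perp$.

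For spectral stability, write \eqref{319} as $\p_x\cl_+ z=\la z$ with $J=\p_x$ skew-symmetric. An eigenfunction $z$ with $\Re\la>0$ satisfies $\dpr{z}{\phi_\la}=0$, because $\phi_\la$ lies in the kernel of $(\p_x\cl_+)^*=-\cl_+\p_x$ (as $\cl_+\p_x\phi_\la=\cl_+\phi_\la'=0$), so $\la\dpr{z}{\phi_\la}=\dpr{\p_x\cl_+ z}{\phi_\la}=0$. Hence $\dpr{\cl_+ z}{z}\ge0$. Pairing $\p_x\cl_+ z=\la z$ with $\cl_+ z$ and using that $\dpr{\p_x w}{w}$ is purely imaginary gives $\la\dpr{\cl_+ z}{z}\in i\rone$; since $\dpr{\cl_+ z}{z}\ge0$ is real and $\Re\la>0$, necessarily $\dpr{\cl_+ z}{z}=0$. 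As the form is non-negative on $\{\phi_\la\}^\perp$, its null vector $z$ satisfies $\cl_+ z\in\mathrm{span}\{\phi_\la\}$, so $\la z=\p_x\cl_+ z\in\mathrm{span}\{\phi_\la'\}$; thus $z\in\mathrm{span}\{\phi_\la'\}$ (as $\la\ne0$), and then $\p_x\cl_+ z=0$ forces $\la z=0$, i.e.\ $z=0$ — a contradiction, so \eqref{319} has no eigenvalue with $\Re\la>0$. Hypothesis \eqref{nond} is what one needs if instead proceeding through the Hamiltonian--Krein index count: under \eqref{nond}, $\dpr{\cl_+^{-1}\phi_\la}{\phi_\la}$ is exactly the obstruction to a length-$3$ Jordan chain at $\la=0$, so the generalized kernel of $\p_x\cl_+$ is the single size-$2$ block $\mathrm{span}\{\phi_\la',\cl_+^{-1}\phi_\la\}$, and the index identity $k_r+2k_c+2k_i^-=n(\cl_+)-n(D)=1-n(D)$ with $D=[\dpr{\cl_+^{-1}\phi_\la}{\phi_\la}]$ invertible, combined with orbital stability of the minimizer (Cazenave--Lions, via the pre-compactness established for Theorem \ref{theo:5}) to exclude $k_r\ge1$, forces $n(D)=1$ and $k_r=k_c=k_i^-=0$.

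The range $1<p<5$ for \eqref{50}/\eqref{51} is treated identically, subcriticality being precisely what makes the constrained functional bounded below with pre-compact minimizing sequences; $\cl_+\phi_\la'=0$, $\dpr{\cl_+\phi_\la}{\phi_\la}=(1-p)\|\phi_\la\|_{L^{p+1}}^{p+1}<0$ and the second-order inequality all persist, and the limit waves $U$ inherit these relations by passing to limits in Theorem \ref{theo:5}, so the previous two steps apply verbatim. I expect the genuinely delicate points to be: the spectral setup in this nonlocal regime — $\sigma_{ess}(\cl_+)\subset(0,\infty)$ and isolation of $0$ (using \eqref{dec}), and the precise form of the constrained second variation over the tangent space $\{\phi_\la\}^\perp$ — and, on the index-theoretic route, verifying the hypotheses of the index count for the skew, non-boundedly-invertible $J=\p_x$ and the $\p_x^{-2}$-nonlocal $\cl_+$, together with the upgrade $n(D)=1$, equivalently the slope statement $d''(\om)=-\dpr{\cl_+^{-1}\phi_\la}{\phi_\la}>0$.
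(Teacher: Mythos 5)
Your proposal is essentially correct, and while your weak non-degeneracy step is the same two-dimensional quadratic-form computation the paper performs in Section 4.2 (pair a kernel element $\psi$ with $\phi_\la$, use $\dpr{\cl_+\phi_\la}{\phi_\la}<0$ from \eqref{lplus} and the constrained positivity from Proposition \ref{prop:30}), your spectral stability step takes a genuinely different route. The paper feeds $n^-(\cl_+)=1$, weak non-degeneracy, and the sign $\dpr{\cl_+^{-1}\phi_\la}{\phi_\la}<0$ --- obtained from the abstract Lemma \ref{stab:10} together with hypothesis \eqref{nond} --- into the Lin--Zeng index bound $k_{unstable}\leq n^-(\cl_+)-k_0^{\leq 0}(\cl_+)$ of Corollary \ref{cor:lo98}. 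You instead argue directly on the eigenvalue problem \eqref{319}: any eigenfunction with $\Re\la>0$ is $L^2$-orthogonal to $\phi_\la$ because $\phi_\la\in Ker[(\p_x\cl_+)^*]$, skew-symmetry of $\p_x$ then forces $\dpr{\cl_+ z}{z}=0$, and Cauchy--Schwarz for the semi-definite form on $\{\phi_\la\}^\perp$ pins $\cl_+ z\in \mathrm{span}\{\phi_\la\}$, hence $z\in\mathrm{span}\{\phi_\la'\}$ and finally $z=0$. This is a legitimate and more economical argument for the exact statement of the theorem (absence of point spectrum in $\Re\la>0$), and, as you observe, it does not use \eqref{nond} at all, so it reaches the conclusion under weaker hypotheses. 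What the paper's index-theoretic route buys in exchange is stronger spectral information: $\si(\p_x\cl_+)\subset i\rone$ and, via Proposition \ref{prop:lk}, the exact algebraic multiplicity of the zero eigenvalue, which is precisely where \eqref{nond} enters. Two small repairs to your write-up: the positivity $\int_\rone|\phi_\la|^p\phi_\la\,dx>0$ should be justified not by ``the minimum value is negative'' but by $m_\ci(\la)<\la$ combined with the Fourier-side bound $\frac12\int_\rone|u'|^2+|\p_x^{-1}u|^2\,dx\geq\|u\|_{L^2}^2$, exactly as in Lemma \ref{nonv} and Corollary \ref{cor:2}; and your closing Hamiltonian--Krein paragraph is the one non-self-contained piece (excluding $k_r\geq 1$ by appealing to orbital stability is circular for a spectral claim, and $n(D)=1$ is asserted rather than proved), but since it is offered only as an alternative to your direct argument, nothing essential is lost.
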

{\bf Remark:} The condition \eqref{nond} is a commonly imposed technical  condition in the literature. In fact, it guarantees that the zero eigenvalue for $\p_x \cl_+$ has an algebraic multiplicity of exactly two, see Proposition \ref{prop:lk} below.

 The paper is organized as follows. In Section \ref{sec:2}, we state some preliminary results and background information - most  of these are well-known or standard, yet others, like Proposition \ref{prop:exp} require quite a bit of analysis.  Section \ref{sec:3} contains the main variational construction, which proceeds through  compensated compactness arguments. The argument here requires a surprising detailed, hands on Fourier analytic methods in order to verify the subadditivity of the cost functional.  In addition, Euler-Lagrange equations are derived and some basic spectral properties of the linearized operators are verified as well. In Section \ref{sec:4}, we present the basics of the instability index count theory, along with some corollaries, which fit our situation well. In addition, we establish a general framework, which yields the spectral stability of the waves, Lemma \ref{stab:10}. This is then easily applied to the previously waves to infer their spectral stability. 
\section{Preliminaries}
\label{sec:2}

\subsection{Weak solutions and bootstrapping regularity}
In our considerations, we will need to  rely, at least initially,  on a weak solution formulations of certain elliptic PDE's, specifically  \eqref{o_profile}. More concretely, 
\begin{definition}
	\label{defi:10}
	We say that $g\in H^2(\rone)$ is a weak solution of the equation
	\begin{equation}
		\label{515}
		g''''+\om  g''+g+(F(g'))'=0,
	\end{equation}
	if the non-linearity satisfies $F(f')\in L^2$, whenever $f\in H^2$ and for every $h\in H^2$, we have the relation
	$\dpr{g''}{h''}+  \dpr{\om g''+g}{h} - \dpr{F(g')}{h'}=0$.
\end{definition}
A simple observation is that if $g$ is a weak solution of \eqref{515}, in the sense of Definition \ref{defi:10}, then we can bootstrap its smoothness, namely $g\in H^3(\rone)$. Indeed, since the operator $\p_x^4+1$ is invertible on $L^2(\rone)$, introduce $\tilde{g}:=(\p_x^4+1)^{-1}[-\om g''+\p_x(F(g'))]\in L^2(\rone)$. Of course, this is the {\it formal}  solution  of \eqref{515}, which should mean that $\tilde{g}=g$, which we will prove momentarily. Before that, let us observe that due to the smoothing nature of $(\p_x^4+1)^{-1}:L^2\to H^4$, we can immediately see that\footnote{which can be improved further to $\tilde{g} \in H^4$, once we impose the  mild extra smoothness assumption  $F(g)\in H^1$. This  will not be necessary for our purposes.}  $\tilde{g} \in H^3(\rone)$. Now, for every test function $h$, we have that\footnote{understood as pairing between an element of the distribution space $H^{-2}$ and $H^2$}
$$
	\dpr{(1+\p_x^4) g}{h}=-\om \dpr{g''}{h}-\dpr{F(g')}{h'}=\dpr{(1+\p_x^4) \tilde{g}}{h}
$$
It follows that $\dpr{ g}{(1+\p_x^4)h}=\dpr{\tilde{g}}{(1+\p_x^4)h}$ for all $h$, whence $g=\tilde{g}$. In particular, we have shown the extra regularity $g\in H^3(\rone)$. One can immediately bootstrap this to $g\in H^4(\rone)$ by taking into account the representation  $g =(\p_x^4+1)^{-1}[-\om g''+\p_x(F(g'))]$, if $\p_x F(g')\in L^2$. This is the case for the   profile equations \eqref{o_profile}.   Thus, we have shown
\begin{proposition}
	\label{prop:nm}
	The weak solution $g$ of \eqref{515} is in fact $g\in H^3(\rone)$. For non-linearities in the form
	$F(z)=|z|^p, |z|^{p-1} z$, this can be further improved to $g \in H^4(\rone)$, whence the weak solutions of \eqref{515}   in fact satisfy \eqref{515} as $L^2$ functions.
\end{proposition}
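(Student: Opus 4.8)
The plan is to make precise the elliptic-regularity-by-duality argument sketched in the paragraph preceding the statement, in three steps, and then to run one more bootstrap for the two concrete nonlinearities. \textbf{Step 1.} From \eqref{515} one has, as an identity of distributions, $(\p_x^4+1)g=-\om g''-(F(g'))'$, so I would introduce the \emph{formal} solution $\tilde g:=(\p_x^4+1)^{-1}\bigl[-\om g''-(F(g'))'\bigr]$. Since $g\in H^2(\rone)$ we have $g''\in L^2$, and by the standing hypothesis of Definition \ref{defi:10} we have $F(g')\in L^2$, hence $(F(g'))'\in H^{-1}$; thus the bracket is an element of $H^{-1}(\rone)$. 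Because the Fourier multiplier $(\p_x^4+1)^{-1}$ maps $H^{\al}$ boundedly into $H^{\al+4}$ for every $\al\in\rone$, it follows that $\tilde g\in H^3(\rone)$.

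\textbf{Step 2: $g=\tilde g$.} Testing the weak identity of Definition \ref{defi:10} against an arbitrary $h\in\coi(\rone)$ and integrating by parts (legitimate since $g\in H^2$ and $h$ is smooth with compact support), I would rewrite it as $\dpr{g}{h''''+\om h''+h}+\dpr{(F(g'))'}{h}=0$, and hence, using also $\dpr{g}{h''}=\dpr{g''}{h}$ and the definition of $\tilde g$,
\begin{equation*}
\dpr{g}{(1+\p_x^4)h}=\dpr{-\om g''-(F(g'))'}{h}=\dpr{(1+\p_x^4)\tilde g}{h}=\dpr{\tilde g}{(1+\p_x^4)h}.
\end{equation*}
Thus $g-\tilde g\in L^2(\rone)$ is orthogonal to $(1+\p_x^4)h$ for every $h\in\coi(\rone)$. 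Since $(1+\p_x^4)$ is an isomorphism of $H^4(\rone)$ onto $L^2(\rone)$ and $\coi(\rone)$ is dense in $H^4(\rone)$, the set $\{(1+\p_x^4)h:h\in\coi(\rone)\}$ is dense in $L^2(\rone)$; therefore $g=\tilde g$, and in particular $g\in H^3(\rone)$.

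\textbf{Step 3: upgrade to $H^4$ for $F(z)=|z|^p$ and $F(z)=|z|^{p-1}z$.} Now that $g\in H^3(\rone)$, we have $g'\in H^2(\rone)\hookrightarrow C^1(\rone)\cap L^\infty(\rone)$ and $g''\in L^2(\rone)$. For $p>1$ the nonlinearity $F$ is $C^1$ with $|F'(z)|=p|z|^{p-1}$, which is bounded on the (bounded) range of $g'$; the chain rule then gives $(F(g'))'=F'(g')\,g''$, a product of an $L^\infty$ function and an $L^2$ function, hence an element of $L^2(\rone)$. Consequently $-\om g''-(F(g'))'\in L^2(\rone)$, and applying the smoothing bound $(\p_x^4+1)^{-1}:L^2\to H^4$ to the representation $g=(\p_x^4+1)^{-1}\bigl[-\om g''-(F(g'))'\bigr]$ (valid by Step 2) yields $g\in H^4(\rone)$. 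With $g\in H^4(\rone)$ every term of \eqref{515} is an honest $L^2$ function, so the equation holds pointwise a.e.

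I expect the only genuinely delicate point to be the verification $(F(g'))'\in L^2$ in Step 3: this is exactly where the restriction $p>1$ enters — it makes $F'$ continuous (hence bounded near $0$) rather than singular there, so that $F'(g')$ is a bounded function; for $p\le 1$ this route to $H^4$ is unavailable. The two remaining points I would state rather than merely assert — the density of $(1+\p_x^4)\coi(\rone)$ in $L^2(\rone)$ used in Step 2, and the integrations by parts — are routine, the former following from the mapping properties of $1+\p_x^4$ and the latter from one-dimensional Sobolev embeddings.
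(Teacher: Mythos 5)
Your proposal is correct and follows essentially the same route as the paper: define the formal solution $\tilde g=(\p_x^4+1)^{-1}[-\om g''-(F(g'))']\in H^3$, identify $g=\tilde g$ by testing the weak formulation against $(1+\p_x^4)h$, and then bootstrap to $H^4$ once $\p_x F(g')\in L^2$ is verified via the chain rule for the two concrete nonlinearities with $p>1$. The only additions are the (routine) density and chain-rule justifications that the paper leaves implicit, and you correct a harmless sign typo in the paper's formula for $\tilde g$.
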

Due to this result, we will henceforth not make the distinction between weak and strong(er) solutions of our profile equations.
\subsection{Exponential decay of the waves and  eigenfunctions}
In this section, we show that the solutions to the elliptic profile equations \eqref{o_profile} have exponential decay at $\pm \infty$, and in fact we are able to compute explicitly the leading order terms. Similar result holds for any element in the kernels of  the linearized operators $\cl_+, L_+$. The precise result is as follows.
\begin{proposition}
	\label{prop:exp}
	Let $\phi\in H^4$ solves either of the  fourth order    profile equations  \eqref{o_profile}, with $\om<2$. Then, $\phi, \phi'$ both have  exponential decay at $\pm \infty$ and in fact,
	\begin{equation}
		\label{d:exp}
		|\phi(x)|+|\phi'(x)|\leq C e^{-k_\om|x|}, k_{\om}:=\left\{\begin{array}{cc}
			\f{\sqrt{2-\om}}{2}                   & \om \in (-2,2), \\
			\sqrt{\f{-\om-\sqrt{\om^2-4}}{2}} & \om <-2
		\end{array}
		\right.
	\end{equation}

	In addition, every eigenfunction of $L_+ \Psi=0$ has the same exponential decay.
	Similarly, let $\phi\in H^2\cap \dot{H}^{-2}$ solves, for $\om<2$,
	$$
		\p_x^2 \phi +\p_x^{-2} \phi +\om \phi+|\phi_\la|^{p}=0,
	$$
	Then, $\phi$ has the same exponential decay as in \eqref{d:exp}, together with the eigenfunctions corresponding to zero eigenvalues for $\cl_+$.
\end{proposition}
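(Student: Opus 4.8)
The plan is to treat \eqref{515} as a constant-coefficient fourth-order ODE with an exponentially decaying perturbation, and read off the decay from the roots of the characteristic polynomial. For the equation $\phi''''+\om\phi''+\phi+(F(\phi'))'=0$ with $F(z)=|z|^p$ or $|z|^{p-1}z$ and $p>1$, the first step is to record that a weak solution is in $H^4$ by Proposition \ref{prop:nm}, hence $\phi,\phi'\in L^\infty$ with $\phi(x),\phi'(x)\to 0$ as $|x|\to\infty$ (by Sobolev embedding and the Riemann–Lebesgue/decay-at-infinity property of $H^1$ functions). Consequently the nonlinear term $g:=(F(\phi'))'=F'(\phi')\phi''=p|\phi'|^{p-2}\phi'\,\phi''$ satisfies $|g(x)|\le C|\phi'(x)|^{p-1}|\phi''(x)|$, and since $p-1>0$ and $\phi'\to 0$, $g$ is ``small at infinity'' relative to $\phi$; more precisely $g\in L^2$ and $|g(x)|\to 0$. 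Writing the equation as $(\p_x^4+\om\p_x^2+1)\phi=-g$, I invert the constant-coefficient operator via its Green's function.

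The core computation is the characteristic equation $r^4+\om r^2+1=0$, i.e. $r^2=\frac{-\om\pm\sqrt{\om^2-4}}{2}$. For $\om\in(-2,2)$ the two values of $r^2$ are complex conjugates of modulus $1$; writing $r^2=e^{\pm i\theta}$ one finds the four roots have real parts $\pm\cos(\theta/2)=\pm\sqrt{\frac{1+(-\om/2)}{2}}=\pm\frac{\sqrt{2-\om}}{2}$, which is exactly $k_\om$ in \eqref{d:exp}. For $\om<-2$, both roots $r^2=\frac{-\om\mp\sqrt{\om^2-4}}{2}$ are real and positive, and the smallest $|r|$ among the four real roots is $\sqrt{\frac{-\om-\sqrt{\om^2-4}}{2}}$, again matching $k_\om$. (The borderline $\om=-2$ is excluded; note $\om<2$ guarantees $1$ is not a root issue and all roots are off the imaginary axis, so genuine exponential dichotomy holds.) The Green's function $G$ of $\p_x^4+\om\p_x^2+1$ on $\R$ therefore decays like $e^{-k_\om|x|}$ (times at most a polynomial when roots collide, which can be absorbed by shrinking the rate slightly, or handled exactly since for $\om<-2$ roots are simple and for $\om\in(-2,2)$ they are simple as well unless $\om^2=4$).

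From $\phi=-G*g$ I run a bootstrap/continuity argument: fix any $\mu<k_\om$; since $|g(x)|\le C|\phi'(x)|^{p-1}|\phi''(x)|$ and $\phi,\phi',\phi''$ are bounded and tend to $0$, a standard argument (e.g. a Gronwall-type or fixed-point estimate on weighted spaces $e^{\mu|x|}L^\infty$, using that $g$ is superlinearly small) shows first that $\phi$ decays faster than any polynomial, then that $e^{\mu|x|}\phi\in L^\infty$ for every $\mu<k_\om$, and finally, feeding this back, that $e^{k_\om|x|}(|\phi|+|\phi'|)\in L^\infty$ — the superlinear gain $p-1>0$ in $g\sim|\phi'|^{p-1}\phi''$ is what lets the convolution $G*g$ inherit the full rate $k_\om$ of $G$ rather than degrading it. I would phrase this cleanly by setting $w(x)=\sup_{|y|\ge|x|}(|\phi(y)|+|\phi'(y)|+|\phi''(y)|)$, deriving an integral inequality $w(x)\lesssim \int e^{-k_\om|x-y|}w(y)^p\,dy$ and iterating.

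The eigenfunction statements are then essentially corollaries. For $L_+\Psi=0$, i.e. $\Psi''''+\om\Psi''+\Psi=-p\,\p_x(|\phi'|^{p-2}\phi'\,\Psi')$, the right-hand side is $\le C(|\phi'|^{p-1}|\Psi''|+|\phi'|^{p-2}|\phi''||\Psi'|)$, which is $o(1)$ times $|\Psi''|+|\Psi'|$ since $\phi'\to0$; the same Green's-function/weighted-space argument gives $|\Psi(x)|+|\Psi'(x)|\le Ce^{-k_\om|x|}$. For the $\dot H^2\cap\dot H^{-2}$ formulation $\p_x^2\phi+\p_x^{-2}\phi+\om\phi+|\phi|^p=0$, I apply $\p_x^2$ and use $\phi=\p_x v$ (with $v\in H^2$, as in Theorem \ref{theo:5}) to convert it to exactly an equation of the form \eqref{515} for $v$, transferring the decay of $v,v'$ back to $\phi=v'$ and $\phi'=v''$; the kernel-of-$\cl_+$ functions are handled the same way.

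\textbf{Main obstacle.} The only genuinely delicate point is showing the convolution with the Green's function does not lose the rate — i.e. that the ``small coefficient'' nonlinearity, which decays at an \emph{a priori} unknown rate, does not merely give decay at some rate strictly below $k_\om$ but recovers $k_\om$ exactly. This is where the superlinearity $p>1$ must be used: $|g|\lesssim |\phi'|^{p-1}(|\phi'|+|\phi''|)$ means that once $\phi$ has \emph{any} exponential decay rate $\mu_0>0$, $g$ decays at rate $p\mu_0>\mu_0$, so one iteration pushes the rate up to $\min(k_\om,p\mu_0)$, and finitely many iterations reach $k_\om^-$; the endpoint $k_\om$ itself then follows because $G*g$ with $g$ decaying strictly faster than $k_\om$ decays at the full rate $k_\om$ of $G$. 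Making this iteration rigorous (rather than hand-wavy) — in particular bootstrapping from ``no a priori decay'' to ``some decay'' — requires the initial observation that $\phi,\phi'\to0$ so that the effective coefficient is $\le\varepsilon$ outside a large interval, which linearizes the problem at infinity and gives the first exponential rate via the standard ODE dichotomy lemma.
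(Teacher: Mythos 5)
Your strategy coincides with the paper's: invert $\p_x^4+\om\p_x^2+1$ (bounded inverse on $L^2$ since $\xi^4-\om\xi^2+1>0$ for $\om<2$), read the rate $k_\om$ off the roots of the bi-quadratic $\ka^4-\om\ka^2+1=0$, and close a fixed-point estimate in exponentially weighted spaces using that the effective potential is smaller than $\eps$ outside a large interval $[-N,N]$. Two execution points are worth comparing. First, the paper never differentiates the nonlinearity: setting $g=\phi'$, it writes $g=-\p_x(\p_x^4+\om\p_x^2+1)^{-1}[Vg]$ with $V=|g|^{p-1}$ or $|g|^{p-1}\mathrm{sgn}(g)$, so the forcing is a bounded, vanishing-at-infinity potential times $g$ itself. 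Your form $g=F'(\phi')\phi''$ drags $\phi''$ into the estimates, and in the linearized equation the differentiated coefficient produces a term with the factor $|\phi'|^{p-2}$, which is unbounded near zeros of $\phi'$ when $1<p<2$ --- harmless but requiring care, and entirely sidestepped by keeping $\p_x$ on the Green's function. Second, the ``main obstacle'' you flag (recovering the endpoint rate via a superlinear iteration $\mu_0\to p\mu_0\to\cdots$, then passing to the limit $k_\om$) is dissolved in the paper's argument: after splitting the integral equation at $|y|=N$, the operator $\cg g=\chi_{|x|>N}\int_{|y|>N}G'_\om(\cdot-y)V g\,dy$ has norm at most $1/2$ on \emph{every} weighted space $Y_m$, $0\le m\le k_\om$, uniformly in $m$, while the remaining source term $\int_{|y|\le N}G'_\om(\cdot-y)Vg\,dy$ already lies in $Y_{k_\om}$ because it is a convolution of $G'_\om$ with a compactly supported function; the von Neumann series then yields $g\in Y_{k_\om}$ --- the full, endpoint rate --- in a single step, with no iteration on the exponent. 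Your proposal is correct in substance, but adopting the paper's undifferentiated formulation would both remove the $|\phi'|^{p-2}$ wrinkle and make the rate-upgrading iteration unnecessary.
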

 \begin{proof} 
 We work with the fourth order waves, namely the solutions of \eqref{o_profile}. Noting that $\xi^4-\om \xi^2+1>0$, for every $\xi\in\rone$, since $\om<2$, we have that $(\p_x^4+\om \p_x^2+1)^{-1}$ is a bounded operator on $L^2$, so we have the  representation
 \begin{eqnarray*}
 	\phi &=& -(\p_x^4+\om \p_x^2+1)^{-1}[|\phi'|^{p-1} \phi'], \\
 	\phi &=& -(\p_x^4+\om \p_x^2+1)^{-1}[|\phi'|^{p}],
 \end{eqnarray*}
 Take a derivative in this last equation and denote $g:=\phi'$, so
 \begin{eqnarray}
 \label{app:10}
 g &=& -\p_x (\p_x^4+\om \p_x^2+1)^{-1}[|g|^{p-1} g] \\
 \label{app:11}
 g &=& -\p_x (\p_x^4+\om \p_x^2+1)^{-1}[|g|^{p}]
 \end{eqnarray}
 Clearly, it is enough to show the desired exponential decay for $g$, whence since $\phi$ vanishes at $\pm \infty$, one can conclude from the representations
 $\phi(x)=-\int_x^\infty \phi'(y) dy=\int_{-\infty}^x \phi'(y) dy$, that $\phi$ vanishes at the same rate at $\pm \infty$.
 Let  $V(x):=|g(x)|^{p-1}$ or   $V(x):=|g(x)|^{p-1} sgn(g(x))$, depending on whether we consider \eqref{app:10} or \eqref{app:11}. Either way, we consider
 \begin{equation}
 \label{app:15}
 g= -\p_x (\p_x^4+\om \p_x^2+1)^{-1}[V g]
 \end{equation}
 where $\lim_{|x|\to \infty} V(x)=0$, since $g\in H^4\subset C_0(\rone)$.
 
 Let us now comment on the operator
 $(\p_x^4+\om \p_x^2+1)^{-1}$. Clearly
 $$
 (\p_x^4+\om \p_x^2+1)^{-1} f(x)=\int_{-\infty}^\infty G_\om(x-y) f(y) dy,
 $$
 where $\widehat{G}_\om(\xi)=\f{1}{\xi^4-\om \xi^2 +1}$. Note that since the roots of the bi-quadratic equation $\ka^4-\om_\la \ka^2+1=0$ are given by
 \begin{equation}
 \label{k:10}
 k^2= \f{\om \pm \sqrt{\om^2-4}}{2}.
 \end{equation}
 By the formula  $\widehat{(\xi^2+k^2)^{-1}}=\f{\pi}{k} e^{-k |\cdot|}$, we see that $G$ is a linear combination of two exponential functions.  In fact, taking into account $\om<2$ and after some elementary analysis,
 we conclude that the solutions of \eqref{k:10}, have $\Re k=\f{\sqrt{2-\om}}{2}$ when $\om \in (-2,2)$ and
 $ \Re k=\sqrt{\f{-\om-\sqrt{\om^2-4}}{2}}$ for $\om<-2$. It follows that
 \begin{eqnarray*}
 	& & |G_\om(x)|+|G'_\om(x)|\leq C_\om  e^{- k_\om |x|}, k_\om:=\left\{\begin{array}{cc}
 		\f{\sqrt{2-\om}}{2}               & \om \in (-2,2), \\
 		\sqrt{\f{-\om-\sqrt{\om^2-4}}{2}} & \om<-2
 	\end{array}
 	\right.
 \end{eqnarray*}
 We are now ready to analyze \eqref{app:15}. To this end,  let $\eps=\eps_\om>0$, to be selected momentarily. Let $N$ be so large that $|V(x)|<\eps$, so long as $|x|>N$. We now rewrite \eqref{app:15} as
 \begin{equation}
 \label{app:20}
 g+\int_{|y|>N} G'_\om(x-y) V(y) g(y) dy  = -\int_{|y|\leq N} G'_\om(x-y) V(y) g(y) dy
 \end{equation}
 We can view this as an integral equation in $X_N:=L^\infty(|\cdot|>N)$, with \\ $\cg g(x) =\chi_{|x|>N} \int_{|y|>N} G'_\om(x-y) V(y) g(y) dy$ acting boundedly on $X_N$. In fact, for every $m: 0\leq m\leq k_\om$ and every $g\in Y_m: \|g\|_{Y_m}:=\sup_{|x|>N} |g(x)| e^{m |x|}<\infty$, we have\footnote{Here, we use the fact that for $0<a<b$, $\int_{-\infty}^\infty e^{-a|y|} e^{-b|x-y|} dy\leq
 	C_{b} e^{-a|x|}$}
 \begin{eqnarray*}
 	|\cg g (x)|  &\leq &  C_\om \eps \|g\|_{Y_m} \int_{|y|>N} e^{-k_\om|x-y|} e^{-m|y|} dy \leq C_\om \eps \|g\|_{Y_m}  \int_{-\infty}^\infty
 	e^{-k_\om|x-y|} e^{-m|y|} dy\leq \\
 	&\leq & \eps D_\om  \|g\|_{Y_m} e^{-m |x|},
 \end{eqnarray*}
 whence $\cg:Y_m\to Y_m$ with $\|\cg\|_{B(Y_m)}\leq \eps D_\om$. Thus, select $\eps(\om): \eps D_\om=\f{1}{2}$.
 
 In the particular case $m=0$, we can use von Neumann series to resolve \eqref{app:20}
 \begin{equation}
 \label{app:40}
 g=\sum_{k=0}^\infty (-1)^{k+1}  \cg^k[\int_{|y|\leq N} G'_\om(\cdot-y) V(y) g(y) dy].
 \end{equation}
 Using the representation \eqref{app:40}, the fact that $|\int_{|y|\leq N} G'_\om(x-y) V(y) g(y) dy|\leq C_\om  e^{-k_\om|x|}$,  and by the mapping properties of $\cg$, we conclude that $g\in Y_{k_\om}$. That is,
 $
 \sup_{|x|>N} |g(x)|\leq C e^{-k_\om|x|},
 $
 which by the boundedness of $g$ can be extended to $\sup_x |g(x)|\leq C e^{-k_\om|x|}$.
 
 Regarding the eigenfunctions, we employ the same strategy, namely if $L_+ \Psi=0$, this means that for $g=\Psi'$,
 \begin{equation}
 \label{eq:g}
 g(x)= - p \int_{-\infty}^\infty G_\om'(x-y)[V(y) g(y)] dy,
 \end{equation}
 with the same $V$ as above. Due to the fact that $|V|\sim |\phi'(x)|^{p-1}$ has exponential decay now, clearly \eqref{eq:g} can be bootstrapped to produce decay for $g$ matching the decay of $G_\om'$, that is $e^{-k_\om |x|}$. Finally, $\Psi(x)= - \int_x^\infty g(y) dy=\int_{-\infty}^x g(y) dy$ recovers the same exponential decay for $\Psi$ as for $g$.
\end{proof}
\subsection{Pohozaev identities}
\begin{lemma}
	\label{le:poh1}
	Suppose $\phi \in H^2(\rone)$ is a weak solution of
	\begin{equation}
		\phi''''+\om \phi''+\phi+\partial_{x}(|\phi'|^{p-1}\phi')=0.
		\label{110}
	\end{equation}
	More concretely,  for every test function $h\in H^2(\rone)$, $\dpr{\phi''}{h''}+
		\om	\dpr{ \phi''}{h}-\dpr{|\phi'|^{p-1}\phi'}{\partial_{x}h}=0$.
	Then,  the following identities hold
	\begin{equation}
		\begin{split}\int_{\rone}\left|\phi''\right|^{2}dx & =\int_{\rone}\left|\phi\right|^{2}dx+\frac{p-1}{2(p+1)}\int\left|\phi'\right|^{p+1}dx,\\
			\omega \int\left|\phi'\right|^{2}dx & =2\int_{\rone}\left|\phi\right|^{2}dx-\frac{p+3}{2(p+1)}\int\left|\phi'\right|^{p+1}dx.
		\end{split}
		\label{120}
	\end{equation}
	Similarly, suppose  $\phi\in H^2(\rone)$ is a weak solution of
	\begin{equation}
		\phi''''+\om \phi''+\phi+\partial_{x}(|\phi'|^{p})=0,\label{130}
	\end{equation}
	then
	\begin{equation}
		\begin{split}\int_{\rone}\left|\phi''\right|^{2}dx & =\int_{\rone}\left|\phi\right|^{2}dx+\frac{p-1}{2(p+1)}\int\left|\phi'\right|^{p}\phi'dx,\\
			\omega \int\left|\phi'\right|^{2}dx & =2\int_{\rone}\left|\phi\right|^{2}dx-\frac{p+3}{2(p+1)}\int\left|\phi'\right|^{p}\phi'dx.
		\end{split}
		\label{140}
	\end{equation}
\end{lemma}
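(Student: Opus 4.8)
The plan is to derive the two identities by testing the weak formulation against two well-chosen test functions, namely $h=\phi$ and $h=x\phi'$ (the dilation generator). Since the lemma is about weak solutions in $H^2(\rone)$, a little care is needed: $x\phi'$ is not literally in $H^2$, so I would first record that the solution is in fact smooth and exponentially decaying — by Proposition \ref{prop:nm} we have $\phi\in H^4(\rone)$, and by Proposition \ref{prop:exp} both $\phi$ and $\phi'$ decay exponentially together with all the derivatives appearing in the computation. Hence all the integrals below converge absolutely and all integrations by parts are justified, and one may work directly with the strong form $\phi''''+\om\phi''+\phi+(|\phi'|^{p-1}\phi')'=0$ (resp. with $|\phi'|^p$), multiplying by the chosen function and integrating over $\rone$.

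\emph{First identity (energy/Nehari-type).} Multiply \eqref{110} by $\phi$ and integrate. The term $\int \phi''''\phi = \int|\phi''|^2$ after two integrations by parts; $\int \om\phi''\phi = -\om\int|\phi'|^2$; $\int \phi\cdot\phi = \int|\phi|^2$; and $\int \partial_x(|\phi'|^{p-1}\phi')\,\phi = -\int |\phi'|^{p-1}\phi'\,\phi' = -\int|\phi'|^{p+1}$. This gives the relation
$$
\int|\phi''|^2 - \om\int|\phi'|^2 + \int|\phi|^2 - \int|\phi'|^{p+1}=0.
$$
\emph{Second identity (dilation/Pohozaev).} Multiply \eqref{110} by $x\phi'$ and integrate. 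Each term is integrated by parts, repeatedly shifting derivatives off $x\phi'$ and using $\int f' (x f') = \int f'(xf)' - \text{(lower order)}$ type manipulations; the standard outcome is $\int \phi''''\,x\phi' = \tfrac{3}{2}\int|\phi''|^2$, $\int \om\phi''\,x\phi' = -\tfrac{\om}{2}\int|\phi'|^2$, $\int \phi\,x\phi' = -\tfrac12\int|\phi|^2$, and $\int \partial_x(|\phi'|^{p-1}\phi')\,x\phi' = -\tfrac{1}{p+1}\int|\phi'|^{p+1}$ (the last via $\partial_x(x|\phi'|^{p+1}) = |\phi'|^{p+1} + x\,\partial_x(|\phi'|^{p+1})$ and one integration by parts). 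Collecting,
$$
\tfrac{3}{2}\int|\phi''|^2 - \tfrac{\om}{2}\int|\phi'|^2 - \tfrac12\int|\phi|^2 - \tfrac{1}{p+1}\int|\phi'|^{p+1}=0.
$$
Now I would treat these two scalar relations as a linear system in the unknowns $\int|\phi''|^2$ and $\om\int|\phi'|^2$, with $\int|\phi|^2$ and $\int|\phi'|^{p+1}$ as parameters; eliminating appropriately reproduces exactly \eqref{120}. The case of \eqref{130} is identical, the only change being that the nonlinear term $\int \partial_x(|\phi'|^p)\,\phi = -\int|\phi'|^p\phi'$ and $\int \partial_x(|\phi'|^p)\,x\phi'$ is handled by the same integration-by-parts identity, producing $\int|\phi'|^p\phi'$ in place of $\int|\phi'|^{p+1}$, which yields \eqref{140}.

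The only genuinely delicate point is the justification of using $h=x\phi'$ as a "test function," since it lies outside $H^2(\rone)$; the honest way is to truncate, i.e. test against $\chi(x/R)\,x\phi'$ with a smooth cutoff $\chi$, carry out all the integrations by parts, and then let $R\to\infty$, using the exponential decay from Proposition \ref{prop:exp} to kill every boundary and commutator term. This bookkeeping is routine but is where all the care must go; the algebraic extraction of \eqref{120} and \eqref{140} from the two weighted identities is then immediate.
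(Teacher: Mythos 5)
Your strategy is exactly the paper's: test the equation against $\phi$ and against the dilation field $x\phi'$, then solve the resulting $2\times 2$ linear system for $\int|\phi''|^2$ and $\om\int|\phi'|^2$. The regularity/decay discussion (bootstrapping to $H^4$, exponential decay, truncating $x\phi'$ and passing to the limit) is fine and, if anything, more careful than what the paper records. However, there is a concrete computational error in the dilation identity that breaks the derivation: you claim
\[
\int_{\rone} \partial_x\bigl(|\phi'|^{p-1}\phi'\bigr)\, x\phi'\,dx \;=\; -\tfrac{1}{p+1}\int_{\rone}|\phi'|^{p+1}\,dx,
\]
but the correct value is $-\tfrac{p}{p+1}\int|\phi'|^{p+1}\,dx$. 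Indeed, one integration by parts gives $-\int|\phi'|^{p+1}-\int x\,|\phi'|^{p-1}\phi'\phi''$, and since $|\phi'|^{p-1}\phi'\phi''=\tfrac{1}{p+1}\partial_x\bigl(|\phi'|^{p+1}\bigr)$, the second integral equals $-\tfrac{1}{p+1}\int|\phi'|^{p+1}$, leaving $-\bigl(1-\tfrac{1}{p+1}\bigr)\int|\phi'|^{p+1}=-\tfrac{p}{p+1}\int|\phi'|^{p+1}$. Equivalently, $\partial_x(|\phi'|^{p-1}\phi')\cdot\phi'=p|\phi'|^{p-1}\phi'\phi''=\tfrac{p}{p+1}\partial_x(|\phi'|^{p+1})$, not $\tfrac{1}{p+1}\partial_x(|\phi'|^{p+1})$ as your shortcut implicitly assumes.

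This is not a harmless slip: feeding your coefficient into the elimination yields $\int|\phi''|^2=\int|\phi|^2-\tfrac{p-1}{2(p+1)}\int|\phi'|^{p+1}$, i.e.\ the wrong sign on the nonlinear term, so the system you wrote does \emph{not} ``reproduce exactly \eqref{120}'' as asserted. With the corrected dilation relation
\[
\tfrac{3}{2}\int|\phi''|^2-\tfrac{\om}{2}\int|\phi'|^2-\tfrac12\int|\phi|^2-\tfrac{p}{p+1}\int|\phi'|^{p+1}=0,
\]
combined with your (correct) first relation, the elimination does give \eqref{120}, and the same fix applies verbatim to the $|\phi'|^p$ case for \eqref{140}. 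So the proof is repaired by a one-line recomputation of that single term; everything else stands.
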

\begin{proof}
	Multiplying \eqref{110}  by $\phi$ and integrating over
	$\rone$ we get
	\begin{equation}
		\int_{\rone}\left|\phi''\right|^{2}dx-\omega \int\left|\phi'\right|^{2}dx+
		\int_{\rone}\left|\phi\right|^{2}dx-\int\left|\phi'\right|^{p+1}dx=0.\label{150}
	\end{equation}
	Now, multiplying \eqref{110}  by $x\phi'$ (recall that according to Proposition \ref{prop:exp} this function has exponential decay) and integrating
	over $\rone$ we get
	\begin{equation}
		\frac{3}{2}\int_{\rone}\left|\phi''\right|^{2}dx-
		\frac{\omega}{2}\int_{\rone}\left|\phi'\right|^{2}dx-\frac{1}{2}\int_{\rone}\left|\phi\right|^{2}dx-\frac{p}{p+1}\int_{\rone}\left|\phi'\right|^{p+1}dx=0.
		\label{160}
	\end{equation}
	Solving \eqref{150}  and \eqref{160}  for $\int_{\rone}\left|\phi''\right|^{2}dx$
	and $\omega \int\left|\phi'\right|^{2}dx$ we get \eqref{120}. Finally, the proof of \eqref{140} follows similar path.

\end{proof}
\noindent An easy corollary of Lemma \ref{le:poh1} is the following lemma.
\begin{lemma}
	\label{le:poh2}
	Suppose $\phi \in H^1(\rone)\cap \dot{H}^{-1}(\rone)$ is a weak solution of
	\begin{equation}
		\phi''+\p_x^{-2} \phi+\om \phi+ |\phi|^{p-1}\phi=0.
		\label{1101}
	\end{equation}
	Then,  the following identities hold
	\begin{equation}
		\begin{split}\int_{\rone}\left|\phi'\right|^{2}dx & =\int_{\rone}\left|\p_x^{-1} \phi\right|^{2}dx+\frac{p-1}{2(p+1)}\int\left|\phi\right|^{p+1}dx,\\
			\omega \int\left|\phi\right|^{2}dx & =2\int_{\rone}\left|\p_x^{-1} \phi\right|^{2}dx-\frac{p+3}{2(p+1)}\int\left|\phi\right|^{p+1}dx.
		\end{split}
		\label{1201}
	\end{equation}
	Similarly, suppose  $\phi\in H^1(\rone)\cap \dot{H}^{-1}(\rone)$ is a weak solution of
	\begin{equation}
		\label{1301}
		\phi''+\p_x^{-2} \phi+\om \phi+\partial_{x}(|\phi|^{p})=0,
	\end{equation}
	then
	\begin{equation}
		\begin{split}\int_{\rone}\left|\phi'\right|^{2}dx & =\int_{\rone}\left|\p_x^{-1} \phi\right|^{2}dx+\frac{p-1}{2(p+1)}\int\left|\phi\right|^{p}\phi dx,\\
			\omega \int\left|\phi\right|^{2}dx & =2\int_{\rone}\left|\p_x^{-1} \phi\right|^{2}dx-\frac{p+3}{2(p+1)}\int\left|\phi\right|^{p}\phi dx.
		\end{split}
		\label{1401}
	\end{equation}
\end{lemma}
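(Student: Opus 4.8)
\emph{Proof strategy.} The plan is to reduce Lemma~\ref{le:poh2} to Lemma~\ref{le:poh1} via the antiderivative substitution, i.e.\ the same relation $\vp_\la'=\phi_\la$ that links the two profile equations in Theorem~\ref{theo:5}; this is presumably what ``easy corollary'' refers to. Suppose $\phi\in H^1(\rone)\cap\dot H^{-1}(\rone)$ is a weak solution of \eqref{1101}. Since $\phi\in\dot H^{-1}(\rone)$ means $\phi=u_x$ for some $u\in L^2$, set $\vp:=u=\p_x^{-1}\phi$; then $\vp\in L^2$, $\vp'=\phi\in L^2$, $\vp''=\phi'\in L^2$, so $\vp\in H^2(\rone)$. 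Inserting $\phi=\vp'$ and using $\vp'''=\phi''$, $\p_x^{-1}\vp=\p_x^{-2}\phi$, $\p_x^{-1}\p_x(|\vp'|^{p-1}\vp')=|\phi|^{p-1}\phi$ one obtains the identity
\[
\phi''+\p_x^{-2}\phi+\om\phi+|\phi|^{p-1}\phi=\p_x^{-1}\bigl(\vp''''+\om\vp''+\vp+\p_x(|\vp'|^{p-1}\vp')\bigr),
\]
so that \eqref{1101} is precisely the statement that $\vp$ solves the fourth-order equation \eqref{110} weakly; the identical computation turns the hypothesis \eqref{1301} into \eqref{130}.

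Next I would confirm that the weak formulation of \eqref{1101} for $\phi\in H^1\cap\dot H^{-1}$ translates, via the displayed identity, into the weak formulation of \eqref{110} for $\vp\in H^2$ in the exact sense demanded by Lemma~\ref{le:poh1} — this is a short integration by parts (test \eqref{1101} against $\p_x h$ with $h\in H^2(\rone)$). Granting this, Proposition~\ref{prop:nm} upgrades $\vp$ to $H^4(\rone)$ and Proposition~\ref{prop:exp} gives the exponential decay of $\vp$ and $\vp'$, so Lemma~\ref{le:poh1} applies to $\vp$ verbatim and produces the identities \eqref{120}.

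Finally I would read \eqref{120} back through $\vp'=\phi$, $\vp''=\phi'$, $\vp=\p_x^{-1}\phi$ and $|\vp'|^{p+1}=|\phi|^{p+1}$, which gives exactly \eqref{1201}; repeating the whole argument with \eqref{1301}, \eqref{130} and \eqref{140} in place of \eqref{1101}, \eqref{110} and \eqref{120}, together with $|\vp'|^{p}\vp'=|\phi|^{p}\phi$, yields \eqref{1401}. I do not expect a genuine obstacle: the only step that is not a purely mechanical substitution is the equivalence of the two weak formulations in the second paragraph (equivalently, the regularity bootstrap that makes $\vp\in H^4$ with exponential decay, so that the multipliers $\vp$ and $x\vp'$ used inside the proof of Lemma~\ref{le:poh1} are admissible), and that is entirely routine.
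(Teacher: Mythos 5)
Your proof is correct and is essentially the paper's own argument: the paper simply sets $\phi=g'$ with $g=\p_x^{-1}\phi\in H^2$, notes that $g$ then solves \eqref{110} (resp.\ \eqref{130}), and applies Lemma~\ref{le:poh1}. The extra care you take with the equivalence of the weak formulations and the admissibility of the multipliers is a reasonable elaboration of the same route, not a different one.
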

\begin{proof}
	Just apply Lemma \ref{le:poh1}  to the function $g$, where $\phi=g'$. Note that $g\in H^2$ solves \eqref{110} or \eqref{130}.
\end{proof}

\subsection{Sampling a $W^{1,1}$ function}
We have the following elementary  lemma, which may be of independent interest.
\begin{lemma}\label{portion}
	Let $ N> 1 $ be an integer and $ f:\rone\to \rone $,  $f\in W^{1,1}(\rone)$. Then
	\begin{equation*}
		\sum_{n=-\infty }^{\infty}\int_{n\varepsilon}^{n\varepsilon + \frac{\varepsilon}{N}}f(x)dx = \frac{1}{N} \int_{\rone} f(x)dx +O(\varepsilon)
	\end{equation*}
	as $ \varepsilon \rightarrow 0+ $. More precisely,
	$$
		\left| \sum_{n=-\infty }^{\infty}\int_{n\varepsilon}^{n\varepsilon + \frac{\varepsilon}{N}}f(x)dx - \frac{1}{N} \int_{\rone} f(x)dx\right|\leq  \f{\eps}{N}  \int_{\rone} |f'(y)| dy
	$$
\end{lemma}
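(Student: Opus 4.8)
The plan is to prove the estimate directly by comparing, on each sampling subinterval, the integral of $f$ against the ``expected'' contribution $\frac{1}{N}\int_{n\eps}^{(n+1)\eps} f(x)\,dx$, and then summing the errors. The point is that over the full period $[n\eps,(n+1)\eps)$ the subinterval $[n\eps, n\eps+\eps/N)$ is exactly a $\frac1N$-fraction, so the sum $\sum_n \frac1N\int_{n\eps}^{(n+1)\eps} f = \frac1N\int_\rone f$ telescopes to the main term exactly. Hence the whole error is $\sum_n \bigl(\int_{n\eps}^{n\eps+\eps/N} f(x)\,dx - \frac1N\int_{n\eps}^{(n+1)\eps} f(x)\,dx\bigr)$, and it suffices to bound each summand by $\frac1N\int_{n\eps}^{(n+1)\eps}|f'(y)|\,dy$.

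For a fixed $n$, write $a=n\eps$ and use the fundamental theorem of calculus in the form $f(x) = f(t) + \int_t^x f'(s)\,ds$ for $x,t$ in the period. First I would express both terms over a common normalization: $\int_a^{a+\eps/N} f(x)\,dx - \frac1N\int_a^{a+\eps}f(x)\,dx = \frac1N\int_a^{a+\eps}\bigl(N\,\mathbf{1}_{[a,a+\eps/N)}(x)-1\bigr)f(x)\,dx$. The kernel $N\,\mathbf{1}_{[a,a+\eps/N)}(x)-1$ has mean zero over $[a,a+\eps)$, so I can replace $f(x)$ by $f(x)-c$ for any constant $c$; choosing $c=f(a)$ (or an average) and using $|f(x)-f(a)|\le \int_a^{a+\eps}|f'(s)|\,ds$ for all $x\in[a,a+\eps)$ gives
\begin{equation*}
\left|\int_a^{a+\eps/N} f(x)\,dx - \frac1N\int_a^{a+\eps}f(x)\,dx\right| \le \frac1N\int_a^{a+\eps}\bigl|N\,\mathbf{1}_{[a,a+\eps/N)}(x)-1\bigr|\cdot\frac1\eps\,dx \cdot \eps \int_a^{a+\eps}|f'(s)|\,ds,
\end{equation*}
and since $\int_a^{a+\eps}|N\mathbf{1}-1| = 2(\eps - \eps/N) \le 2\eps$, one gets a clean bound of the shape $C\frac{\eps}{N}\int_a^{a+\eps}|f'|$; a slightly more careful bookkeeping (splitting the kernel into its positive part $N-1$ on $[a,a+\eps/N)$ and $-1$ on the rest, and noting both pieces have total mass $\eps(1-1/N)$) sharpens the constant so that the summand is at most $\frac1N\int_a^{a+\eps}|f'(s)|\,ds$.

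Summing over $n\in\zz$ and using that the intervals $[n\eps,(n+1)\eps)$ tile $\rone$ yields $\sum_n \frac1N\int_{n\eps}^{(n+1)\eps}|f'(s)|\,ds = \frac1N\int_\rone|f'(s)|\,ds$, which is the asserted bound; the $O(\eps)$ statement follows since $\|f'\|_{L^1}<\infty$. Throughout one should note that $f\in W^{1,1}(\rone)$ guarantees $f$ has an absolutely continuous representative with $f'\in L^1$, so the fundamental theorem of calculus and Fubini manipulations are all justified; absolute convergence of the various sums is automatic because everything is dominated by $\|f'\|_{L^1}$ plus, for the individual term estimates before telescoping, $\|f\|_{L^1}$. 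The only mild subtlety — the ``main obstacle,'' though it is minor — is getting the constant exactly $1/N$ rather than something like $2/N$; this is handled by exploiting the mean-zero property of the sampling kernel to subtract off $f(n\eps)$ before estimating, rather than bounding the two integrals separately.
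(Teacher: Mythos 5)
Your argument is correct and is essentially the paper's proof in different packaging: the paper compares the sampled subinterval $[n\eps,n\eps+\eps/N)$ to each of the $N-1$ translated copies tiling the rest of the period and bounds each difference by $\f{\eps}{N}\int_{n\eps}^{(n+1)\eps}|f'|$, which is exactly your mean-zero-kernel/equal-mass pairing, and both proofs then telescope the main term and sum the local errors to $\f{\eps}{N}\int_{\rone}|f'|$. The one step you defer to ``careful bookkeeping'' --- obtaining $1/N$ rather than $2/N$ --- does go through as you indicate: the positive and negative parts of the kernel each have mass $(N-1)\eps/N$, so the difference of the two normalized averages is bounded by the oscillation of $f$ over the period, namely $\int_{n\eps}^{(n+1)\eps}|f'(s)|\,ds$, and the resulting factor $(N-1)\eps/N^2\leq \eps/N$ gives the stated constant (note that subtracting $f(n\eps)$ alone and estimating the two pieces separately would only give $2(N-1)\eps/N^2$, which exceeds $\eps/N$ for $N\geq 3$).
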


\begin{proof}
	Let's split each interval $ [n\varepsilon + \frac{\varepsilon}{N}, (n+1)\varepsilon) $ into $ N-1 $ equal intervals and compare  one of them with the integral over the interval $ [n\varepsilon,n\varepsilon +\frac{\varepsilon}{N}) $. We have
	\begin{eqnarray*}
		& & 	\left |\int_{n\varepsilon}^{n\varepsilon +\frac{\varepsilon}{N}}f(x)dx - \int_{n\varepsilon+\frac{m\varepsilon}{N}}^{n\varepsilon +\frac{(m+1)\varepsilon}{N}}f(x)dx\right |
		= \left |\int_{n\varepsilon}^{n\varepsilon +\frac{\varepsilon}{N}}f(x)- f\left (x+\frac{m\varepsilon}{N}\right )dx \right |\\
		&\leq & \int_{n\varepsilon}^{n\varepsilon +\frac{\varepsilon}{N}}\int_{x}^{x+\frac{m\varepsilon}{N}}|f'(y)|dydx \leq  \int_{n\varepsilon}^{n\varepsilon +\frac{\varepsilon}{N}}\int_{n\varepsilon}^{n\varepsilon+\frac{(m+1)\varepsilon}{N}}|f'(y)|dydx\\
		&\leq & \frac{\varepsilon}{N} \int_{n\varepsilon}^{(n+1)\varepsilon}|f'(y)|dy
	\end{eqnarray*}
	for all $ m = 1,\ldots,N-1 $.

	Now, using this last estimate, we get, after adding and subtracting $\sum_{m=1}^{N-1}\int_{n\varepsilon+\frac{m\varepsilon}{N}}^{n\varepsilon +\frac{(m+1)\varepsilon}{N}}f(x)dx$,
	\begin{eqnarray*}
		& & 	N\sum_{n=-\infty }^{\infty}\int_{n\varepsilon}^{n\varepsilon + \frac{\varepsilon}{N}}f(x)dx
		= \sum_{n=-\infty }^{\infty} \left(\int_{n\varepsilon}^{n\varepsilon + \frac{\varepsilon}{N}}f(x)dx +  \sum_{m=1}^{N-1}\int_{n\varepsilon+\frac{m\varepsilon}{N}}^{n\varepsilon +\frac{(m+1)\varepsilon}{N}}f(x)dx\right)\\
		& + & \sum_{n=-\infty }^{\infty} \left((N-1)\int_{n\varepsilon}^{n\varepsilon + \frac{\varepsilon}{N}}f(x)dx - \sum_{m=1}^{N-1}\int_{n\varepsilon+\frac{m\varepsilon}{N}}^{n\varepsilon +\frac{(m+1)\varepsilon}{N}}f(x)dx\right)= \\
		&=& \sum_{n=-\infty }^{\infty} \int_{n\varepsilon}^{(n+1)\varepsilon }f(x)dx +
		\sum_{n=-\infty }^{\infty} \sum_{m=1}^{N-1} \left( \int_{n\varepsilon}^{n\varepsilon +\frac{\varepsilon}{N}}f(x)dx - \int_{n\varepsilon+\frac{m\varepsilon}{N}}^{n\varepsilon +\frac{(m+1)\varepsilon}{N}}f(x)dx\right) =\\
		&=& \int_{\rone}  f(x) dx +
		\sum_{n=-\infty }^{\infty} \sum_{m=1}^{N-1} \left( \int_{n\varepsilon}^{n\varepsilon +\frac{\varepsilon}{N}}f(x)dx - \int_{n\varepsilon+\frac{m\varepsilon}{N}}^{n\varepsilon +\frac{(m+1)\varepsilon}{N}}f(x)dx\right).
	\end{eqnarray*}
	Rearranging terms and using the estimate for  $\left |\int_{n\varepsilon}^{n\varepsilon +\frac{\varepsilon}{N}}f(x)dx - \int_{n\varepsilon+\frac{m\varepsilon}{N}}^{n\varepsilon +\frac{(m+1)\varepsilon}{N}}f(x)dx\right |$, implies
	\begin{eqnarray*}
		& & |N\sum_{n=-\infty }^{\infty}\int_{n\varepsilon}^{n\varepsilon + \frac{\varepsilon}{N}}f(x)dx - \int_{\rone}  f(x) dx|\leq \sum_{n=-\infty }^{\infty} \sum_{m=1}^{N-1} \left| \int_{n\varepsilon}^{n\varepsilon +\frac{\varepsilon}{N}}f(x)dx - \int_{n\varepsilon+\frac{m\varepsilon}{N}}^{n\varepsilon +\frac{(m+1)\varepsilon}{N}}f(x)dx\right|\leq \\
		&\leq & \eps \sum_{n=-\infty }^{\infty} \int_{n\varepsilon}^{(n+1)\varepsilon}|f'(y)|dy=\eps \int_{\rone} |f'(y)| dy.
	\end{eqnarray*}
	Dividing by $N$ yields the claim.
\end{proof}

\section{Variational construction}
\label{sec:3}
In this section, we provide the variational construction of the waves.
It turns out that for some aspects of the construction, it is more beneficial to look at the following alternative
$\ci, \cj$ defined in the beginning.  Introduce  the following functions, which are the corresponding infimums, if they exists, of the constrained minimization problems
\begin{align}
	 & m_I(\lambda) = \inf_{u\in H^{2},\norm{u_{x}}^{2}=\la } \left \{\frac{1}{2} \int_\rone  |u_{xx}|^2 +|u|^2dx -\frac{1}{p+1}\int_\rone |u_x|^p u_x dx\right \},\label{minI}                       \\
	 & m_J(\lambda) = \inf_{v\in H^{2},\norm{v_{x}}^{2}=\la } \left \{\frac{1}{2} \int_\rone  |v_{xx}|^2 +|v|^2dx -\frac{1}{p+1}\int_\rone |v_x|^{p+1} dx\right \},\label{minJ}                       \\
	 & m_\ci(\lambda) = \inf_{U\in H^{1}\cap \dot{H}^{-1},\norm{U}^{2}=\la } \left \{\frac{1}{2} \int_\rone  |U_x|^2 +|\p^{-1}_x U|^2dx -\frac{1}{p+1}\int_\rone |U|^p U dx\right \},\label{minIp}    \\
	 & m_\cj(\lambda) = \inf_{V\in H^{1}\cap \dot{H}^{-1},\norm{V}^{2}=\la  } \left \{\frac{1}{2} \int_\rone  |V_x|^2 +|\p^{-1}_x V|^2dx -\frac{1}{p+1}\int_\rone |V|^{p+1} dx\right \}.\label{minJp}
\end{align}
These are usually referred to as cost functions.
We have the following sequence of lemmas, that establishes some important properties of the functionals and the $m$ functions.
\subsection{The variational problems are well-posed and equivalent}
Regarding well-posedness,  we have the following result.
\begin{lemma}
	\label{le:20}
	For $1<p<5$, 	$ m_I, m_J>-\infty $. That is, the problems \eqref{40} and \eqref{50} are well-posed.
\end{lemma}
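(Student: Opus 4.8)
The plan is to show that the functionals $I$ and $J$ are bounded below on the constraint manifold $\{u \in H^2 : \|u_x\|_{L^2}^2 = \la\}$. The quadratic part $\frac12\int |u_{xx}|^2 + |u|^2\,dx$ is manifestly nonnegative, so the only danger comes from the nonlinear term $-\frac{1}{p+1}\int |u_x|^p u_x\,dx$ (respectively $-\frac{1}{p+1}\int |u_x|^{p+1}\,dx$), which I must control by a small multiple of the quadratic part plus a constant depending on $\la$. Since $|\int |u_x|^p u_x\,dx| \le \|u_x\|_{L^{p+1}}^{p+1}$ in both cases, it suffices to bound $\|u_x\|_{L^{p+1}}^{p+1}$.

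First I would apply the Gagliardo--Nirenberg interpolation inequality on $\rone$ to the function $w := u_x$. Writing $\|w\|_{L^{p+1}} \lesssim \|w'\|_{L^2}^{\theta}\|w\|_{L^2}^{1-\theta}$ with the scaling-determined exponent $\theta = \frac12 - \frac{1}{p+1} = \frac{p-1}{2(p+1)}$, I get
\[
\|u_x\|_{L^{p+1}}^{p+1} \lesssim \|u_{xx}\|_{L^2}^{(p+1)\theta}\,\|u_x\|_{L^2}^{(p+1)(1-\theta)} = \|u_{xx}\|_{L^2}^{(p-1)/2}\,\la^{(p+1)(1-\theta)/2},
\]
using the constraint $\|u_x\|_{L^2}^2 = \la$. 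The key point is that the exponent $(p-1)/2$ on $\|u_{xx}\|_{L^2}$ is strictly less than $2$ precisely when $p < 5$. Hence by Young's inequality $ab \le \delta a^{q} + C_\delta b^{q'}$ with $q = 4/(p-1) > 1$, for any $\delta > 0$,
\[
\|u_x\|_{L^{p+1}}^{p+1} \le \delta \|u_{xx}\|_{L^2}^2 + C_\delta\, \la^{\beta}
\]
for an appropriate $\beta = \beta(p) > 0$. Choosing $\delta$ small (say $\delta = \frac{p+1}{4}$ so the term is absorbed into $\frac12\|u_{xx}\|_{L^2}^2$) gives
\[
I[u] \ge \frac14 \int_\rone |u_{xx}|^2\,dx + \frac12\int_\rone |u|^2\,dx - C_p\,\la^{\beta} \ge -C_p\,\la^{\beta} > -\infty,
\]
and identically for $J$ since the estimate only used $|\int |u_x|^p u_x\,dx| \le \|u_x\|_{L^{p+1}}^{p+1}$, which also bounds the $J$-nonlinearity $\int |u_x|^{p+1}\,dx$ exactly.

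The only real obstacle is making sure the Gagliardo--Nirenberg inequality is available in the form needed and that the borderline arithmetic is correct: one must verify that $(p+1)\theta = (p-1)/2 < 2 \iff p < 5$, which is exactly the stated range, so the case $p = 5$ is genuinely critical (Young's inequality degenerates) and $p > 5$ fails. I would also remark that $u \in H^2(\rone)$ guarantees $u_x \in H^1(\rone) \hookrightarrow L^{p+1}(\rone)$ for all $p$, so all the integrals are finite and the manipulations are justified; the content of the lemma is the \emph{uniform} lower bound, not mere finiteness. This completes the proof that $m_I, m_J > -\infty$.
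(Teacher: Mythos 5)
Your proof is correct and follows essentially the same route as the paper: both bound the nonlinear term by $\|u_x\|_{L^{p+1}}^{p+1}$, apply the one-dimensional Gagliardo--Nirenberg inequality to get $\|u_x\|_{L^{p+1}}^{p+1}\lesssim \|u_x\|_{L^2}^{(p+3)/2}\|u_{xx}\|_{L^2}^{(p-1)/2}$, and absorb the $\|u_{xx}\|_{L^2}^{(p-1)/2}$ factor via Young's inequality using $(p-1)/2<2$ for $p<5$. Your additional remarks on why $p=5$ is critical and on the finiteness of the integrals are accurate but not needed beyond what the paper records.
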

\begin{proof}
	Indeed, it is simple to see that $m_I\geq m_J$.  From the GNS inequality,
	$$
		\| u_x\|_{L^{p+1}}^{p+1}\leq C \|u_x\|_{\dot{H}^{\f{p-1}{2}}}^{p+1}\leq C \norm{u_x}^{\frac{p+3}{2}}_2 \norm{u_{xx}}^{\frac{p-1}{2}}_2,
	$$
	we have
	\begin{align*}
		I[u]\geq J[u]\geq  \frac{1}{2} \int_\rone |u_{xx}|^2dx - C \norm{u_x}^{\frac{p+3}{2}}_2 \norm{u_{xx}}^{\frac{p-1}{2}}_2,
	\end{align*}
	Clearly, if $p\in (1,5)$, $\f{p-1}{2}<2$, so we can use Young's and absorb
	$\norm{u_{xx}}^{\frac{p-1}{2}}_{L^2}$. Thus, we get a bound
	$$
		I[u]\geq J[u]\geq C_\la.
	$$
\end{proof}
The next result is about the equivalence of $m_I, m_\ci$, and $m_J, m_\cj$ respectively.
\begin{lemma}
	\label{le:3}
	For $ 1<p<5  $ we have that $ m_I(\la)=m_\ci(\la) $ and $ m_J(\la) = m_\cj(\la) $. Moreover, if $ \varphi_\la $ is a minimizer for $ m_I(\la) $ ($ m_J(\la) $ respectively), then $ \phi_\la = \varphi_\la' $ is a minimizer for $ m_\ci(\la) $($ m_\cj(\la) $ respectively).
\end{lemma}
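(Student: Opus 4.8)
The plan is to establish the two claimed equalities by a double-inequality argument, exploiting the bijection $u \leftrightarrow u_x$ between the constraint sets. The key observation, already noted in the text, is the identity $I[u] = \ci[u_x]$ and $J[v] = \cj[v_x]$, so the only real content is that the minimization over $H^2$ with the constraint $\|u_x\|^2 = \la$ matches the minimization over $H^1 \cap \dot H^{-1}$ with the constraint $\|U\|^2 = \la$. I would do the case of $m_I, m_\ci$ in detail; the case $m_J, m_\cj$ is identical word for word, replacing $|u_x|^p u_x$ with $|u_x|^{p+1}$ and the nonlinear term is even handled more easily since it does not see the sign.

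First I would prove $m_\ci(\la) \le m_I(\la)$. Given any admissible $u \in H^2$ with $\|u_x\|^2 = \la$, set $U := u_x$. Then $U \in H^1$, and $U \in \dot H^{-1}$ with $\p_x^{-1} U = u$ (here one uses that $u \in H^2 \subset L^2$, so $u$ is genuinely the $\dot H^{-1}$-antiderivative of $U$ in the sense of the definition given in the preliminaries). The constraint becomes $\|U\|^2 = \la$, and $\ci[U] = \ci[u_x] = I[u]$. Taking the infimum over all such $u$ gives $m_\ci(\la) \le m_I(\la)$. Conversely, for $m_I(\la) \le m_\ci(\la)$: given admissible $U \in H^1 \cap \dot H^{-1}$ with $\|U\|^2 = \la$, set $u := \p_x^{-1} U$, which lies in $L^2$ by the definition of $\dot H^{-1}$; since $u_x = U \in H^1$ we get $u \in H^2$, and $u$ is admissible for $m_I$ with $\|u_x\|^2 = \|U\|^2 = \la$, while $I[u] = \ci[u_x] = \ci[U]$. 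Taking infima gives the reverse inequality, hence $m_I(\la) = m_\ci(\la)$, and the same two maps show the minimizer correspondence: if $\vp_\la$ achieves $m_I(\la)$ then $\phi_\la := \vp_\la'$ is admissible for $m_\ci$ and attains the value $m_\ci(\la)$, so it is a minimizer; conversely a minimizer of $m_\ci$ pulls back to one of $m_I$.

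The one point requiring a little care — and I expect this to be the only genuine obstacle — is making sure the antiderivative maps are well-defined and inverse to each other on the nose, i.e. that for $u\in H^2$ one really has $u = \p_x^{-1}(u_x)$ as elements of $L^2$, not merely up to an additive constant. This is where the requirement $U \in \dot H^{-1}$ (equivalently $u \in L^2$, not just $u' \in L^2$) is essential: an $H^2(\rone)$ function automatically vanishes at $\pm\infty$, so its derivative has a unique $L^2$-antiderivative, namely $u$ itself; and conversely the definition $\dot H^{-1} = \{f = w_x : \|f\|_{\dot H^{-1}} = \|w\|_{L^2}\}$ singles out that same antiderivative. I would spell this out in one or two sentences, citing the $\dot H^{-1}$ definition from Section \ref{sec:2}, and then the rest is the bookkeeping above. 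The range $1 < p < 5$ enters only through Lemma \ref{le:20}, which guarantees all four infima are finite (hence the equalities are between real numbers rather than vacuously $-\infty$); no further restriction on $p$ is needed for this lemma.
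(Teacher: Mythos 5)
Your proof is correct, and it takes a cleaner route than the paper's. Both arguments hinge on the same identity $I[u]=\ci[u_x]$, but they match the two constraint sets differently. The paper works with the dense subclass $A_\la$ of Schwartz functions whose Fourier transform vanishes near the origin (so that $\p_x^{-1}$ is manifestly defined there), proves $\ci[\phi]=I[\p_x^{-1}\phi]\geq m_I(\la)$ on $A_\la$, and then passes to the infimum by density; the reverse inequality is obtained by restricting the infimum defining $m_\ci$ to the subset of derivatives. You instead observe that the map $u\mapsto u_x$ is an exact bijection from $\{u\in H^2:\|u_x\|^2=\la\}$ onto $\{U\in H^1\cap\dot H^{-1}:\|U\|^2=\la\}$ — the definition of $\dot H^{-1}$ already supplies the unique $L^2$ antiderivative, and $U\in H^1\cap\dot H^{-1}$ forces $\p_x^{-1}U\in H^2$ — so the two infima coincide with no approximation step at all. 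This buys you something concrete: the paper's density claim is stated in the $H^1$ topology, whereas the functional $\ci$ also contains the term $\|\p_x^{-1}\cdot\|_{L^2}^2$, which is not controlled by $H^1$ convergence, so the paper's limiting step would need an additional justification that your bijection makes unnecessary. Your identification of the ``only genuine obstacle'' (that $u=\p_x^{-1}(u_x)$ on the nose, via uniqueness of the $L^2$ antiderivative) is exactly the right point to isolate, and your remark that $1<p<5$ enters only through the finiteness of the infima is accurate.
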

\begin{proof}
	Let $ \phi$ be a Schwartz function, so that $\hat{\phi}$ is compactly supported, with the property that there exists a $ \de >0 $, so that $ \widehat{\phi} (\xi ) = 0 $ for all $ |\xi|<\de $ and $ \norm{\phi}^2 = \la $. Note that for such functions, $\p_x^{-1} \phi$ is well-defined.

	Denote the set of all such $ \phi$ as $ A_\la $, noting that $A_\la$  is dense in the set $\{u\in H^1: \|u\|_{L^2}^2=\la\}$. For such a $ \phi $
	\begin{equation}\label{equiv1}
		\ci[\phi]  = I [\p_x^{-1}\phi]\geq m_I(\la).
	\end{equation}
	Taking the infimum over all $ \phi\in A_\la $ gives us $ m_\ci(\la)\geq m_I(\la) $. On the other hand,
	\begin{equation*}
		m_\ci(\la) = \inf_{u\in A_\la}\ci[u]\leq\inf_{u\in A_\la, u= v_x\in H^2}\ci[u] = m_I(\la).
	\end{equation*}
	So, $ m_I(\la) = m_\ci(\la) $.  Now, suppose $ \varphi_\la $ is a minimizer for \eqref{minI}, then, clearly, for $ \phi_\la := \varphi_\la' $ we have
	$ I[\varphi_\la] = \ci[\phi_\la] $.
\end{proof}
\subsection{Minimizing sequences produce non-trivial limits}

Now that we know that the minimization problems with cost functions $m_I$ and $m_\ci$ are equivalent,  suppose $\{u_k\}_{k=1}^{\infty } $ is a minimizing for $\ci$, subject to the constraint $\norm{u}_{L^2}^2=\la$.  That is
\begin{equation}
	\label{70}
	\lim\limits_{k\rightarrow\infty }\ci[u_k] = m_I,\quad \norm{u_k}_{L^2}^2=\lambda
\end{equation}
(similarly for $ J $). Clearly, there exists a subsequence, renamed to $\{u_k\} _{k=1}^{\infty }$, such that
\begin{equation}
	\label{80}
	\int_{\rone}|\p_x u_k|^2 dx \rightarrow I_1,\quad \int_{\rone}|\p_x^{-1} u_k|^2 dx \rightarrow I_2, \quad \int_{\rone}|u_k|^{p} u_k dx \rightarrow I_3,
\end{equation}
or
\begin{equation}
	\label{90}
	\int_{\rone}|\p_x u_k|^2 dx \rightarrow J_1,\quad \int_{\rone}|\p_x^{-1} u_k|^2 dx \rightarrow J_2, \quad \int_{\rone}|u_k|^{p+1} dx \rightarrow J_3,
\end{equation}
for $\cj$. We have the following key lemma, that shows that such minimizing sequences can  not possibly be trivially converging to zero.
\begin{lemma}
	\label{nonv}
	For any minimizing sequence satisfying \eqref{80}(\eqref{90} respectively),
	\begin{itemize}
		\item[i)]  $ J_3> 0 $ for  $ 1<p<5 $.
		\item [ii)] $I_3> 0 $ for  $ 1<p<3 $.
	\end{itemize}
\end{lemma}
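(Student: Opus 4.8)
The plan is to get a contradiction from $J_3\le 0$ (resp.\ $I_3\le 0$) by combining the value of the cost function with the cheap lower bound coming from the uncertainty principle, and to eliminate the borderline case "$m=\la$" by exhibiting an explicit competitor.

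\emph{Soft part.} Along the renamed subsequence, coercivity (as in Lemma~\ref{le:20}) together with the boundedness of $\ci[u_k]$ (resp.\ $\cj[v_k]$) forces $I_1,I_2$ (resp.\ $J_1,J_2$) to be finite. For every $u\in H^1\cap \dot H^{-1}$ one has, by Cauchy--Schwarz in Fourier, $\norm{u}_{L^2}^2=\int_\rone|\hat u(\xi)|^2 d\xi\le \big(\int_\rone|\xi|^2|\hat u|^2\big)^{1/2}\big(\int_\rone|\xi|^{-2}|\hat u|^2\big)^{1/2}=\norm{u_x}_{L^2}\norm{\p_x^{-1}u}_{L^2}$, so letting $k\to\infty$ gives $I_1I_2\ge\la^2$, hence $\tfrac12(I_1+I_2)\ge\sqrt{I_1I_2}\ge\la$ (and likewise for $J$). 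Now if $J_3\le 0$ then $J_3=\lim\int_\rone|v_k|^{p+1}=0$, so $m_J(\la)=\lim\cj[v_k]=\tfrac12(J_1+J_2)\ge\la$; and if $I_3\le 0$ then $m_I(\la)=\tfrac12(I_1+I_2)-\tfrac1{p+1}I_3\ge\tfrac12(I_1+I_2)\ge\la$. Thus it suffices to prove the strict inequalities $m_J(\la)<\la$ for $1<p<5$ and $m_I(\la)<\la$ for $1<p<3$.

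\emph{The case $m_J(\la)<\la$.} I would use a single modulated wave packet concentrated in frequency at $|\xi|=1$: fix a real band-limited profile $\sigma$ and set $V(x)=\sqrt\delta\,\sigma(\delta x)\cos x$ with $\delta>0$ small and $\sigma$ normalized so that $\norm{V}_{L^2}^2=\la$. Writing $\cj[V]=\la+\mathcal{E}[V]-\tfrac1{p+1}\int_\rone|V|^{p+1}$, where $\mathcal{E}[V]:=\tfrac12\norm{V_x}_{L^2}^2+\tfrac12\norm{\p_x^{-1}V}_{L^2}^2-\la=\int_\rone\tfrac12(|\xi|-|\xi|^{-1})^2|\hat V(\xi)|^2d\xi$, one checks that the concentration of $\hat V$ near $\pm1$ gives $\mathcal{E}[V]\asymp\delta^2$, while averaging the carrier over a period (using $\frac1{2\pi}\int_0^{2\pi}|\cos\theta|^{p+1}d\theta>0$) gives $\int_\rone|V|^{p+1}\asymp\delta^{(p-1)/2}$. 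Since $\tfrac{p-1}2<2$ when $p<5$, for $\delta$ small $\cj[V]<\la$, hence $m_J(\la)<\la$.

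\emph{The case $m_I(\la)<\la$ --- the main obstacle.} The same packet fails here because the relevant period average $\frac1{2\pi}\int_0^{2\pi}|\cos\theta|^p\cos\theta\,d\theta$ vanishes (substitute $\theta\mapsto\theta+\pi$), so $\int_\rone|V|^pV=O(\delta^\infty)$ is swamped by $\mathcal{E}[V]\asymp\delta^2$. I would instead take a two-harmonic competitor $U=V_1\pm V_2$ with $V_1(x)=\sqrt\delta\,\sigma_1(\delta x)\cos x$ and $V_2(x)=\sqrt\delta\,\eps\,\sigma_2(\delta x)\cos 2x$, where $\sigma_1,\sigma_2>0$ are fixed band-limited profiles, $\eps>0$ is a second small parameter, and the normalization $\norm{U}_{L^2}^2=\la$ (the two pieces are $L^2$-orthogonal, having disjoint Fourier supports) forces $\norm{V_1}_{L^2}^2\approx\la$. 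For the quadratic part, disjointness kills the cross term, and since $V_2$ lives where $\tfrac12(|\xi|-|\xi|^{-1})^2=\tfrac12(2-\tfrac12)^2\ne0$ one gets $\mathcal{E}[U]=\mathcal{E}[V_1]+\mathcal{E}[V_2]\asymp\delta^2+\eps^2$. For the nonlinear part, the leading near-zero Fourier content of $|U|^pU$ is captured by the period average $\frac1{2\pi}\int_0^{2\pi}|a\cos\theta+b\cos 2\theta|^p(a\cos\theta+b\cos2\theta)\,d\theta=(p+1)c_p\,a^p b+O(b^2)$ with $c_p:=\frac1{2\pi}\int_0^{2\pi}|\cos\theta|^p\cos2\theta\,d\theta\ne0$; choosing the sign of $V_2$ so that $c_p$ contributes positively, one obtains $\int_\rone|U|^pU\asymp\eps\,\delta^{(p-1)/2}$. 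The actual content of this step --- and the ``surprisingly detailed Fourier analysis'' referred to in the introduction --- is verifying that the higher harmonics of $|U|^pU$, the slow-modulation corrections, and the $O(b^2)$ terms are all of strictly lower order (this is where $1<p<3$ enters, e.g.\ through integrability of the second $b$-derivative). Putting it together, $\ci[U]=\la+\mathcal{E}[U]-\tfrac1{p+1}\int_\rone|U|^pU<\la$ as soon as $\eps\,\delta^{(p-1)/2}\gtrsim\delta^2+\eps^2$; the two constraints $\eps\gg\delta^{(5-p)/2}$ and $\eps\ll\delta^{(p-1)/2}$ admit a common solution for small $\delta$ precisely when $\tfrac{5-p}2>\tfrac{p-1}2$, i.e.\ $1<p<3$, and then (taking e.g.\ $\eps=\delta^{(p-1)/2+\eta}$ with $\eta>0$ small) we get $m_I(\la)\le\ci[U]<\la$, which is the contradiction sought.
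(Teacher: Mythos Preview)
Your proposal is correct and follows essentially the same route as the paper. The soft part is a mild rephrasing: where the paper computes $\inf_{\|u\|^2=\la}\tfrac12\int(|u_x|^2+|\p_x^{-1}u|^2)=\la$ directly in Fourier, you reach the same conclusion via Cauchy--Schwarz and AM--GM on $I_1,I_2$; this is slightly cleaner since it handles $I_3<0$ and $I_3=0$ uniformly without the sign-flip trick. Your competitors are exactly the paper's: a single packet at frequency $\pm1$ for $\cj$, and a two-tone packet at $\pm1,\pm2$ for $\ci$. The only difference is bookkeeping---you run two independent parameters $(\de,\eps)$ and discover the compatibility window $\de^{(5-p)/2}\ll\eps\ll\de^{(p-1)/2}$ (hence $p<3$), whereas the paper couples them by writing the second harmonic amplitude as $\eps^{\al}$ with $\al$ in a fixed range; taking $\eps=\de^{\al}$ in your scheme recovers the paper's choice. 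The step you flag as the hard one (controlling the remainder beyond the first-order term $(p+1)c_p a^p b$, and the slow-modulation errors) is precisely what the paper spends most of its effort on: it splits according to $|\cos x|\gtrless \eps^{1/2-\ga}$, Taylor-expands on the large set, and uses the sampling Lemma~\ref{portion} together with explicit pairings of sub-intervals to extract the positive main contribution from $\int\chi^{p+1}(\eps x)\cos 2x\,|\cos x|^p\,dx$. Your sketch does not misidentify where the difficulty lies.
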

\begin{proof}
	First of all, clearly, $ I_3\geq 0,  J_3\geq 0 $.  Let $\la>0$. We need to show that strict inequality holds in both cases. We treat them  separately. \\
	\\
	{\bf Proof of $J_3>0$} Suppose for contradiction that $ J_3  = 0 $. Then we can estimate the  infimum explicitly
	\begin{align}\label{211}
		\begin{split}
			m_\cj(\la)& = \inf_{\norm{u}_2^2=\lambda}\{\frac{1}{2} \int_\rone  |u_x|^2 +|\p_x^{-1} u|^2dx \}\\
			& =\inf_{\norm{u}_2^2=\lambda}\{\frac{1}{2}\int_\rone  \frac{ (\xi^2 - 1 )^2}{\xi^2}|\widehat{u}(\xi )|^2d\xi +\int_{\rone}|\widehat{u}(\xi)|^2d\xi\}\geq  \lambda.
		\end{split}
	\end{align}
	In fact, there is an equality above, as it suffices to take a function, whose Fourier transform is   highly localized around say $\xi=1$. The point is that this infimum is actually strictly smaller than $\la$, which would give us  the contradiction sought in this case.

	To see this, let $\chi_1$ be a Schwartz function, whose Fourier transform $\widehat{\chi}_1$ is  an  even bump $C^\infty$ function, supported  in the interval $ (-\f{1}{100}, \f{1}{100})$. Consider then $\chi:=\chi_1^2$, so that $\widehat{\chi}=\widehat{\chi_1}*\widehat{\chi_1}$.
	It has essentially the same properties as $\chi_1$, except it is in addition a positive function. That is, $\chi\geq 0$ and $supp\  \widehat{\chi}\subset  (-\f{1}{50}, \f{1}{50})$. Multiplication by a constant will help us  to achieve
	$ \norm{\chi}^2_2=\la/2$, which we assume henceforth.

	Next, consider the function
	\begin{equation*}
		\widehat{v_{J,\varepsilon}}(\xi) =\frac{1}{\sqrt{\varepsilon}}\left ( \widehat{\chi}(\frac{\xi -1 }{\varepsilon}) + \widehat{\chi}(\frac{\xi +1}{\varepsilon})\right ),
	\end{equation*}
	By the support properties of $\chi$ and $\norm{\chi}^2_2=\la/2$, it is clear that for small $\eps$,  $\|v_{J,\eps}\|_{L^2}^2=\la$. Since $ \widehat{\chi} $ is even, we have that the function
	\begin{equation*}
		v_{J,\varepsilon}(x) = \sqrt{\varepsilon} \chi(\varepsilon x)(e^{ ix}+ e^{- ix}) = 2\sqrt{\varepsilon} \chi(\varepsilon x)\cos( x)
	\end{equation*}
	is  real.
	Next, using the fact that $ \widehat{\chi}(\frac{\xi-1}{\varepsilon}) $ and $ \widehat{\chi}(\frac{\xi+1}{\varepsilon}) $ have disjoint support and change of variables
	\begin{align}\label{est1}
		\begin{split}
			\frac{1}{2}\int_\rone  \frac{ (\xi^2 - 1)^2}{\xi^2}|\widehat{v_{J,\varepsilon}}(\xi )|^2d\xi
			&= \frac{1}{2}\int_\rone  \frac{ (\varepsilon\xi)^2(\varepsilon\xi + 2)^2}{(\varepsilon\xi+1)^2}|\widehat{\chi}(\xi )|^2d\xi + \frac{1}{2}\int_\rone  \frac{ (\varepsilon \xi - 2)^2(\varepsilon\xi)^2}{(\varepsilon\xi-1)^2}|\widehat{\chi}(\xi )|^2d\xi  \\
			&=O(\varepsilon^2)\\
		\end{split}
	\end{align}
	Note that the denominators above are never problematic, as they vanish away from the support of $\widehat{\chi}$.
	On the other hand, using lemma \ref{portion} and the non-negativity of $\chi$,  we get
	\begin{align}\label{est2}
		\begin{split}
			\int_{\rone}|v_{J,\varepsilon}(x)|^{p+1} dx
			&= 2^{p+1} \varepsilon ^ {\frac{p-1}{2}}\int_\rone \chi^{p+1}(x)|\cos\left (\frac{x}{\varepsilon}\right )|^{p+1}dx\\
			&\geq 2^{p+1} \varepsilon ^ {\frac{p-1}{2}} \frac{\sqrt{2}}{2}\sum_{n=-\infty }^{\infty} \int_{\varepsilon(2\pi n) }^{\varepsilon(2\pi n +\pi/4)}\chi^{p+1}(x)dx\\
			&\geq C \varepsilon^{\frac{p-1}{2}} \int \chi^{p+1}(x)dx+O(\eps^{\f{p+1}{2}}).
		\end{split}
	\end{align}
	Combining \eqref{est1} and \eqref{est2} we obtain
	\begin{equation*}
		\cj[v_{J,\varepsilon}] =  O(\varepsilon^2) + \la - C \varepsilon ^{\frac{p-1}{2}},
	\end{equation*}
	which implies that for $p<5$, $ m_\cj < \lambda $ and  this is a contradiction with  \eqref{211}. Thus, $J_3>0$. \\
	\\
	{\bf Proof of $I_3>0$} The considerations in this case are considerably more involved.

	Similarly to \eqref{211}, we first establish  that $ m_\ci \geq  \lambda $ in this case.
	There is a slight twist that the quantity $\int_{\rone}|u|^{p} u dx$ is not necessarily non-negative anymore. However, since the other two quantities in the definition of $\ci$ are positive definite, we can (by switching $u\to -u$ if necessary) to assume that the infimum is taken over $u$, with the property
	$\int_{\rone}|u|^{p} u dx\geq 0$. This will give a better (i.e. smaller or equal)   $m_\ci$, which is what needs to happen anyway  as $m_\ci$ is the infimum. Then, it is clear that
	$$
		m_\ci(\la)\leq \inf_{\norm{u}_2^2=\lambda}\{\frac{1}{2} \int_\rone  |u_x|^2 +|\p_x^{-1} u|^2dx \}.
	$$
	On the other hand, our assumption that $I_3=0$, means that the opposite inequality also holds true as
	$$
		m_\ci=\lim_k \left(\frac{1}{2}  \int_\rone  |\p_x u_k|^2 +|\p_x^{-1} u_k|^2dx -
		\f{1}{p+1} \int_{\rone}|u_k|^{p} u_k dx\right)\geq  \inf_{\norm{u}_2^2=\lambda} \frac{1}{2} \int_\rone  |u_x|^2 +|\p_x^{-1} u|^2dx.
	$$
	This means, in particular  that $m_\ci\geq \la$, as we have argued before.  We will show that this is contradictory. To that end, consider
	\begin{equation*}
		\widehat{v_{I,\varepsilon}}(\xi) =\frac{1}{\sqrt{\varepsilon}}\left( \widehat{\chi}(\frac{\xi -1 }{\varepsilon}) + \widehat{\chi}(\frac{\xi +1}{\varepsilon}) +\varepsilon^\alpha
		\bigg(\widehat{\chi}(\frac{\xi -2 }{\varepsilon}) + \widehat{\chi}(\frac{\xi +2}{\varepsilon})\bigg)  \right),
	\end{equation*}
	with $ \chi  $ as before  and $ \max(\frac{p-1}{2},\frac{2}{p+1})<\alpha<1 $. This is possible, due to the assumption $1<p<3$.  Note that $\al>\frac{2}{p+1}>\f{1}{2}$, due to the same assumption.  Then the function
	\begin{equation*}
		v_{I,\varepsilon}(x) = \sqrt{\varepsilon} \chi(\varepsilon x)(e^{ ix}+ e^{- ix} +\varepsilon^{\alpha}(e^{ 2ix}+ e^{- 2ix})) = 2\sqrt{\varepsilon} \chi(\varepsilon x)(\cos( x) + \varepsilon^{\alpha} \cos(2x))
	\end{equation*}
	is  real and even. Similarly to \eqref{est1} we get
	\begin{equation*}
		\int_\rone  \frac{\left (\xi^2 - 1\right )^2}{\xi^2}|\widehat{v_{I,\varepsilon}}(\xi )|^2d\xi= O(\varepsilon^{2\alpha})
	\end{equation*}
	for all $ \varepsilon $ small enough. Indeed, all terms in $V_{J, \eps}$  have disjoint Fourier support, due to the properties of $\chi$. However,  the dominant terms, due to the choice of $\al$,  are those with $\eps^\al$ in front of it, whence the bound $O(\varepsilon^{2\alpha})$.

	Now, we are going to show that
	\begin{equation}
		\label{100}
		\int_\rone |v_{I,\varepsilon}(x)|^p v_{I,\varepsilon}(x)dx \geq C \varepsilon^{\frac{p-1}{2}+\alpha},
	\end{equation}
	which will finish the proof of lemma, since $\f{p-1}{2}+\al<2\al$.

	To this end, let $ \gamma>0  $ be such that $ (p+1)(\frac{1}{2}-\gamma)=1$ (or $\ga:=\f{p-1}{2(p+1)}\in (0, \f{1}{2})$) and split the integral as follows
	\begin{align*}
		 & \varepsilon ^{\frac{p+1}{2}}\int_\rone \chi^{p+1}(\varepsilon x)(\cos(x)+\varepsilon^\alpha \cos(2x))|\cos(x)+\varepsilon^\alpha \cos(2x)|^pdx                                      \\
		 & =\varepsilon^{\frac{p+1}{2}}\int_{|\cos(x)|\leq\varepsilon^{1/2-\gamma}}\chi^{p+1}(\varepsilon x)(\cos(x)+\varepsilon^{\alpha} \cos(2x))|\cos(x)+\varepsilon^{\alpha} \cos(2x)|^pdx \\
		 & + \varepsilon^{\frac{p+1}{2}}\int_{|\cos(x)|>\varepsilon^{1/2-\gamma}}\chi^{p+1}(\varepsilon x)(\cos(x)+\varepsilon^{\alpha} \cos(2x))|\cos(x)+\varepsilon^{\alpha} \cos(2x)|^pdx
	\end{align*}
	For the first term we have
	\begin{align*}
		 & \left |\varepsilon^{\frac{p+1}{2}}\int_{|\cos(x)|\leq\varepsilon^{1/2-\gamma}}\chi^{p+1}(\varepsilon x)(\cos(x)+\varepsilon^{\alpha} \cos(2x))|\cos(x)+\varepsilon^\alpha \cos(2x)|^pdx\right |                                             \\
		 & \leq \varepsilon^{\frac{p+1}{2}}\int_{\rone}\chi^{p+1}(\varepsilon x) \cdot \varepsilon^{\frac{1}{2}-\gamma }\cdot \varepsilon^{p(\frac{1}{2}-\gamma)}dx\leq C \varepsilon^{\frac{p-1}{2} + (p+1)(\frac{1}{2}-\gamma)}=C \eps^{\f{p+1}{2}}.
	\end{align*}
	as $ \varepsilon\rightarrow0+ $.

	Now we show that the second term is bounded below by $  C \varepsilon^{\frac{p-1}{2}+\alpha}$, and hence is dominant. In order to prepare the calculation, note that for $x: |\cos(x)|>\varepsilon^{1/2-\gamma}$, and $\eps<<1$,
	\begin{eqnarray*}
		|\cos(x)+\varepsilon^\al \cos(2x)|^p &=& |\cos(x)|^p\left (1+2\varepsilon^\al \frac{\cos(2x)}{\cos(x)}+\eps^{2\al} \frac{\cos^2(2x)}{\cos^2(x)}\right )^{p/2} = \\
		&=& |\cos(x)|^p \left(1+p\varepsilon^\al  \frac{\cos(2x)}{\cos(x)}\right)+O(\eps^{2\al+2\ga-1}),
	\end{eqnarray*}
	where in this calculation, we have implicitly used that $\al>\f{1}{2}, \ga>0$.

	Thus, inputting this expansion below,
	\begin{eqnarray*}
		& & \varepsilon^{\frac{p+1}{2}}\int\limits_{|\cos(x)|>\varepsilon^{1/2-\gamma}}\chi^{p+1}
		(\varepsilon x)(\cos(x)+\varepsilon^\al \cos(2x))|\cos(x)+\varepsilon^\al \cos(2x)|^pdx \\
		& = & \varepsilon^{\frac{p+1}{2}}\int\limits_{|\cos(x)|>\varepsilon^{1/2-\gamma}}\chi^{p+1}(\varepsilon x)(\cos(x)+\varepsilon^\alpha \cos(2x))|\cos(x)|^p\left (1+p\varepsilon^\al  \frac{\cos(2x)}{\cos(x)}\right )dx+ \\
		&+& O(\varepsilon^{\frac{p-1}{2}+(2 \al+ 2\gamma-1)})\\
		& = & \varepsilon^{\frac{p+1}{2}} \int\limits_{|\cos(x)|>\varepsilon^{1/2-\gamma}}\chi^{p+1}(\varepsilon x)\cos(x)|\cos(x)|^pdx +\\
		&+&  \varepsilon^{\alpha+\f{p+1}{2}}(p+1)\left(\int_{|\cos(x)|>\varepsilon^{1/2-\gamma}}\chi^{p+1}(\varepsilon x)\cos(2x)|\cos(x)|^pdx\right ) +  O(\varepsilon^{\frac{p-1}{2}+(2 \al+ 2\gamma-1)})\\
		&= &  \varepsilon^{\frac{p+1}{2}}\left (K + \varepsilon^{\al}(p+1)Q\right ) +O(\varepsilon^{\frac{p-1}{2} +(2 \al+ 2\gamma-1)})+O(\varepsilon^{\frac{p-1}{2} +2 \al}).
	\end{eqnarray*}
	where we have introduced two quantities $K,Q$. Clearly, since $2\ga<1$, the term $O(\varepsilon^{\frac{p-1}{2} +(2 \al+ 2\gamma-1)})$ is dominant over $O(\varepsilon^{\frac{p-1}{2} +2 \al})$.

	We claim that $ K=O(1)$, whereas $Q \geq C\eps^{-1}$.
	This  implies \eqref{100} and the the proof of Lemma \ref{nonv} will be complete.
	First, let us deal with $ K $.
	\begin{align*}
		K=\int_{|\cos(x)|>\varepsilon^{1/2-\gamma}}\chi^{p+1}(\varepsilon x)\cos(x)|\cos(x)|^pdx & = \int_{\rone}\chi^{p+1}(\varepsilon x)\cos(x)|\cos(x)|^pdx  + O(1).
	\end{align*}
	The change of variables $ y =  \pi/2-x $  yields
	$$
		\int_{\rone}\chi^{p+1}(\varepsilon x)\cos(x)|\cos(x)|^pdx
		=-\int_{\rone}\chi^{p+1}(\varepsilon (\pi/2-y))\sin(y)|\sin(y)|^p dx
	$$
	Observe however that for $ F(u)  := \int_{0}^{u} (z-z^2)^{p/2}dz$, $ 0<u<1 $, we have
	$$
		\sin(y)|\sin(y)|^p = 2^{p+1} \p_y[F(\sin^2(y/2))],
	$$
	and hence integrating by parts yields
	\begin{eqnarray*}
		\int_{\rone}\chi^{p+1}(\varepsilon x)\cos(x)|\cos(x)|^pdx = - 2^{p+1} \varepsilon \int_\rone \frac{\partial}{\p y}(\chi^{p+1})(\varepsilon (\pi/2-y)) F(\sin^2(y/2))dy
		=O(1),
	\end{eqnarray*}
	since $F$ is continuous function.

We now  prove the claim about $Q$. Similarly, to $K$ we can write

	\begin{align*}
		\int_{|\cos(x)|>\varepsilon^{1/2-\gamma}}\chi^{p+1}(\varepsilon x)\cos(2x)|\cos(x)|^pdx & = \int_{\rone}\chi^{p+1}(\varepsilon x)\cos(2x)|\cos(x)|^pdx  + O(\varepsilon^{p(\frac{1}{2}-\gamma)-1}).
	\end{align*}
	Noting  that $p(\frac{1}{2}-\gamma)-1>-1$, it suffices to show that the first term is bounded from below by $C \eps^{-1}$.

	Splitting each of the intervals $ [2\pi n, 2\pi (n+1))$ into eight  pieces as follows
	\begin{eqnarray*}
		& &\int_{\rone}\chi^{p+1}(\varepsilon x)\cos(2x)|\cos(x)|^pdx \\
		&=& \left (\sum_{n=-\infty}^{\infty}\int_{2 \pi n}^{2\pi n +\frac{\pi}{4}}\chi^{p+1}(\varepsilon x)\cos(2x)|\cos(x)|^pdx + \sum_{n=-\infty}^{\infty}\int_{2 \pi n +\frac{\pi}{2}}^{2\pi n+\frac{3\pi}{4}}\chi^{p+1}(\varepsilon x)\cos(2x)|\cos(x)|^pdx  \right )\\
		&+& \left (\sum_{n=-\infty}^{\infty}\int_{2 \pi n + \frac{3\pi}{4}}^{2\pi n +\pi}\chi^{p+1}(\varepsilon x)\cos(2x)|\cos(x)|^pdx + \sum_{n=-\infty}^{\infty}\int_{2 \pi n +\frac{\pi}{4}}^{2\pi n+\frac{\pi}{2}}\chi^{p+1}(\varepsilon x)\cos(2x)|\cos(x)|^pdx  \right )\\
		&+& \left (\sum_{n=-\infty}^{\infty}\int_{2 \pi n +\pi}^{2\pi n +\frac{5\pi}{4}}\chi^{p+1}(\varepsilon x)\cos(2x)|\cos(x)|^pdx + \sum_{n=-\infty}^{\infty}\int_{2 \pi n +\frac{3\pi}{2}}^{2\pi n+\frac{7\pi}{4}}\chi^{p+1}(\varepsilon x)\cos(2x)|\cos(x)|^pdx  \right )\\
		&+ & \left (\sum_{n=-\infty}^{\infty}\int_{2 \pi n +\frac{7\pi}{4}}^{2\pi n+2\pi}\chi^{p+1}(\varepsilon x)\cos(2x)|\cos(x)|^pdx+\sum_{n=-\infty}^{\infty}\int_{2 \pi n +\frac{5\pi}{4}}^{2\pi n +\frac{3\pi}{2}}\chi^{p+1}(\varepsilon x)\cos(2x)|\cos(x)|^pdx    \right )
	\end{eqnarray*}
	and then pairing them as in \figurename{ \ref{fig:circle}}
	\begin{figure}
		\centering
		\includegraphics[width=0.7\linewidth]{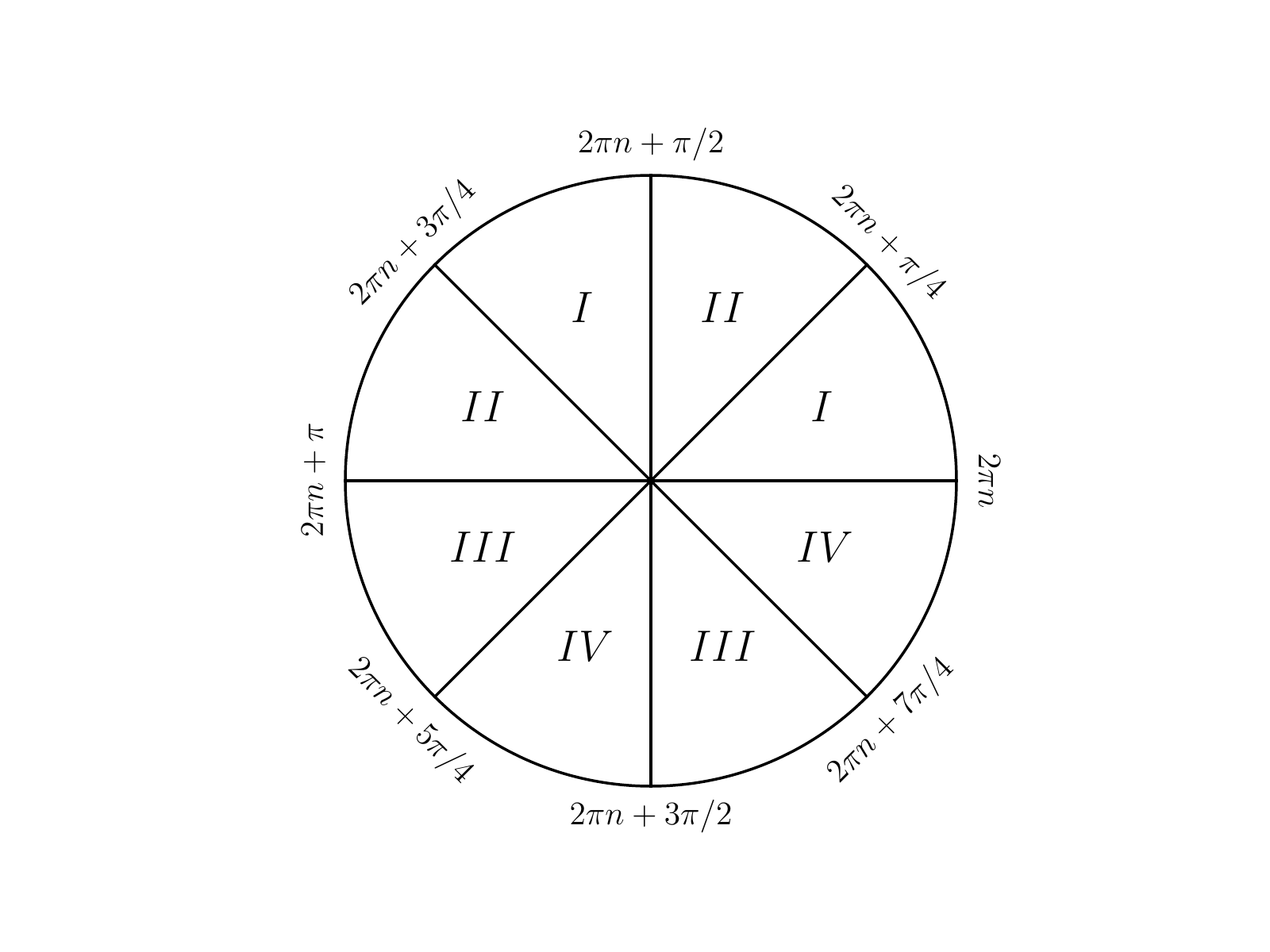}
		\caption{}
		\label{fig:circle}
	\end{figure}
	yields
	\begin{align*}
		 & \sum_{n=-\infty}^{\infty}\int_{2\pi n}^{2\pi n +\frac{\pi}{4}}\chi^{p+1}(\varepsilon x)\cos(2x)(|\cos(x)|^p-|\sin(x)|^p)dx                                                                 \\
		 & + \sum_{n=-\infty}^{\infty}\int_{2\pi n+\frac{3\pi}{4}}^{2\pi n +\pi}\chi^{p+1}(\varepsilon x)\cos(2x)(|\cos(x)|^p-|\sin(x)|^p)dx                                                          \\
		 & + \sum_{n=-\infty}^{\infty}\int_{2\pi n+\pi}^{2\pi n +\frac{5\pi}{4}}\chi^{p+1}(\varepsilon x)\cos(2x)(|\cos(x)|^p-|\sin(x)|^p)dx                                                          \\
		 & + \sum_{n=-\infty}^{\infty}\int_{2\pi n+\frac{7\pi}{4}}^{2\pi n +2\pi}\chi^{p+1}(\varepsilon x)\cos(2x)(|\cos(x)|^p-|\sin(x)|^p)dx                                                         \\
		 & + \sum_{n=-\infty}^{\infty}\int_{2\pi n}^{2\pi n+\frac{\pi}{4}}\left (\chi^{p+1}\left (\varepsilon (x+\frac{\pi}{2})\right)-\chi^{p+1}(\varepsilon x)\right )\cos(2x)|\sin(x)|^pdx         \\
		 & + \sum_{n=-\infty}^{\infty}\int_{2\pi n+\frac{3\pi}{4}}^{2\pi n +\pi}\left (\chi^{p+1}\left (\varepsilon (x-\frac{\pi}{2})\right)-\chi^{p+1}(\varepsilon x)\right )\cos(2x)|\sin(x)|^pdx   \\
		 & + \sum_{n=-\infty}^{\infty}\int_{2\pi n+\pi}^{2\pi n +\frac{5\pi}{4}}\left (\chi^{p+1}\left (\varepsilon (x+\frac{\pi}{2})\right)-\chi^{p+1}(\varepsilon x)\right )\cos(2x)|\sin(x)|^pdx   \\
		 & + \sum_{n=-\infty}^{\infty}\int_{2\pi n+\frac{7\pi}{4}}^{2\pi n +2\pi}\left (\chi^{p+1}\left (\varepsilon (x-\frac{\pi}{2})\right)-\chi^{p+1}(\varepsilon x)\right )\cos(2x)|\sin(x)|^pdx. \\
	\end{align*}
	Note that the first four terms are all positive for all values of $n$.
	In addition, taking the first term
	\begin{eqnarray*}
		& &  \sum_{n=-\infty}^{\infty}\int_{2\pi n}^{2\pi n +\frac{\pi}{4}}\chi^{p+1}(\varepsilon x)\cos(2x)(|\cos(x)|^p-|\sin(x)|^p)dx \geq  \\
		&\geq &  \sum_{n=-\infty}^{\infty}\int_{2\pi n}^{2\pi n +\frac{\pi}{6}}\chi^{p+1}(\varepsilon x)\cos(2x)(|\cos(x)|^p-|\sin(x)|^p)dx \geq c_p \sum_{n=-\infty}^{\infty}\int_{2\pi n}^{2\pi n +\frac{\pi}{6}}\chi^{p+1}(\varepsilon x) dx \\
		& \geq & \eps^{-1} \sum_{n=-\infty}^{\infty}  \int_{2\pi n \eps}^{2\pi n\eps +\frac{\pi}{6}\eps}\chi^{p+1}(y) dy  \geq d_p \eps^{-1} \int_{\rone} \chi^{p+1}(y) dy dy+O(1).
	\end{eqnarray*}
	by Lemma \ref{portion}.

	On the other hand, for the error terms we have a bound of $O(1)$, since
	\begin{eqnarray*}
		& & \sum_{n=-\infty}^{\infty}|\int_{2\pi n}^{2\pi n+\frac{\pi}{4}}\left (\chi^{p+1}\left (\varepsilon (x+\frac{\pi}{2})\right)-\chi^{p+1}(\varepsilon x)\right )\cos(2x)|\sin(x)|^pdx|\leq \\
		&\leq&  \sum_{n=-\infty}^{\infty}\int_{2\pi n}^{2\pi n+\frac{\pi}{4}}\int_{\varepsilon x}^{\varepsilon(x+\frac{\pi}{2})}|(\chi^{p+1})'(y)|dy dx \leq \sum_{n=-\infty}^{\infty}\int_{2\pi n}^{2\pi n+\frac{\pi}{4}}\int_{\varepsilon 2\pi n}^{\varepsilon(2\pi n+\frac{3\pi}{4})}|(\chi^{p+1})'(y)|dy dx \\
		&\leq & C \int_\rone |(\chi^{p+1})'\left (x\right )|dx
	\end{eqnarray*}
	and, similarly,  we estimate the  three  other error terms.
\end{proof}
\noindent We are now ready to present  the main result of this section.
\subsection{Existence of the waves}
\begin{proposition}
	\label{prop:20}
	Let $1<p<3$. Then, the minimization problem \eqref{minI} has a solution. For $1<p<5$, the minimization problem \eqref{minJ} has a solution.
\end{proposition}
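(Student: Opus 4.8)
The plan is to carry out a concentration--compactness argument in the equivalent formulations \eqref{minIp}, \eqref{minJp}, where the constraint is simply $\norm{U}_{L^2}^2=\la$. Once a minimizer $\phi_\la$ for $m_\ci(\la)$ (resp. $m_\cj(\la)$) is produced, Lemma~\ref{le:3} yields a minimizer for \eqref{minI} (resp. \eqref{minJ}): set $\varphi_\la=\p_x^{-1}\phi_\la$, which lies in $H^2(\rone)$ since $\phi_\la\in\dot H^{-1}$ forces $\varphi_\la\in L^2$ while $\varphi_\la'=\phi_\la\in H^1$, and then $\norm{\varphi_{\la,x}}_{L^2}^2=\la$, $I[\varphi_\la]=\ci[\phi_\la]=m_\ci(\la)=m_I(\la)$. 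I write the details for $m_\ci$, $1<p<3$; the case $m_\cj$, $1<p<5$, is identical and slightly easier, since $\int|V|^{p+1}\,dx\geq0$ automatically. Fix $\la>0$ and a minimizing sequence $\{u_k\}$. The Gagliardo--Nirenberg and Young estimate used in Lemma~\ref{le:20} (which only needed $\tfrac{p-1}{2}<2$) gives $\ci[u]\geq\tfrac14\norm{u_x}_{L^2}^2+\tfrac12\norm{\p_x^{-1}u}_{L^2}^2-C_\la$ on the constraint set, so $\{u_k\}$ is bounded in $H^1(\rone)\cap\dot H^{-1}(\rone)$.

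The first substantive step is \emph{strict subadditivity} of the cost function, $m_\ci(\la_1+\la_2)<m_\ci(\la_1)+m_\ci(\la_2)$ for $\la_1,\la_2>0$. It suffices to establish $m_\ci(\theta\la)<\theta\,m_\ci(\la)$ for every $\theta>1$ (write $\la_i=\tfrac{\la_i}{\la_1+\la_2}(\la_1+\la_2)$ and add the two inequalities). The crucial input is $m_\ci(\la)<\la$ for all $\la>0$: the competitor $v_{I,\varepsilon}$ constructed in the proof of Lemma~\ref{nonv} satisfies $\ci[v_{I,\varepsilon}]<\la$ once $\varepsilon$ is small, whereas the constraint $\norm{u}_{L^2}^2=\la$ forces the quadratic part $\tfrac12\bigl(\norm{u_x}_{L^2}^2+\norm{\p_x^{-1}u}_{L^2}^2\bigr)=\tfrac12\int_\rone(\xi^2+\xi^{-2})|\widehat u(\xi)|^2\,d\xi\geq\la$ by the arithmetic--geometric mean inequality. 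Hence, for a near-minimizer $u$ (arranged with $\int|u|^pu\,dx\geq0$ by a sign flip), the quantity $\tfrac1{p+1}\int|u|^pu\,dx=(\text{quadratic part})-\ci[u]\geq \la-m_\ci(\la)-\varepsilon$ is bounded below by a fixed positive number; plugging $V=\sqrt{\theta}\,u$ into $\ci$ and using $\theta^{(p-1)/2}>1$ gives $\ci[V]=\theta\,\ci[u]-\theta(\theta^{(p-1)/2}-1)\tfrac1{p+1}\int|u|^pu\,dx$, and letting $\varepsilon\to0$ yields $m_\ci(\theta\la)\leq\theta\,m_\ci(\la)-\theta(\theta^{(p-1)/2}-1)(\la-m_\ci(\la))<\theta\,m_\ci(\la)$.

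With this in hand, vanishing and dichotomy are both excluded. If $\sup_{y\in\rone}\int_{y-1}^{y+1}|u_k|^2\,dx\to0$, then Lions' lemma forces $u_k\to0$ in $L^{p+1}(\rone)$, so $\bigl|\int|u_k|^pu_k\,dx\bigr|\leq\int|u_k|^{p+1}\,dx\to0$, contradicting $I_3>0$ of Lemma~\ref{nonv}; thus, along a subsequence, there are $c>0$ and shifts $y_k$ with $\int_{y_k-1}^{y_k+1}|u_k|^2\,dx\geq c$. Then $\tilde u_k:=u_k(\cdot-y_k)$ is still bounded in $H^1\cap\dot H^{-1}$, so along a subsequence $\tilde u_k\rightharpoonup u$ in $H^1$ and $\p_x^{-1}\tilde u_k\rightharpoonup\p_x^{-1}u$ in $L^2$; by Rellich $\tilde u_k\to u$ in $L^2_{loc}$, so $\int_{-1}^1|u|^2\,dx\geq c$ and $u\neq0$. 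Writing $r_k:=\tilde u_k-u\rightharpoonup0$, the Brezis--Lieb lemma for $\int|\cdot|^p(\cdot)\,dx$ together with the Hilbert identity $\norm{a+b}^2=\norm{a}^2+\norm{b}^2+2\dpr{a}{b}$ (the cross terms $\to0$ by weak convergence, using $\p_x^{-1}r_k\rightharpoonup0$ for the nonlocal term) gives $\norm{\tilde u_k}_{L^2}^2=\norm{u}_{L^2}^2+\norm{r_k}_{L^2}^2+o(1)$ and $\ci[\tilde u_k]=\ci[u]+\ci[r_k]+o(1)$. Put $\mu:=\norm{u}_{L^2}^2\in(0,\la]$. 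If $\mu<\la$, rescale $r_k$ to have $L^2$-norm $\sqrt{\la-\mu}$ (this alters $\ci[r_k]$ by $o(1)$, since the $\ci[r_k]$ are bounded), so that $\ci[r_k]\geq m_\ci(\la-\mu)+o(1)$ and $\ci[u]\geq m_\ci(\mu)$; hence $m_\ci(\la)=\lim_k\ci[\tilde u_k]\geq m_\ci(\mu)+m_\ci(\la-\mu)$, contradicting strict subadditivity. Therefore $\mu=\la$, i.e.\ $\tilde u_k\to u$ in $L^2$; by the $H^1$ bound and $H^1(\rone)\hookrightarrow L^\infty(\rone)$ this upgrades to $\tilde u_k\to u$ in $L^{p+1}$, so $\int|\tilde u_k|^p\tilde u_k\,dx\to\int|u|^pu\,dx$, and weak lower semicontinuity of the two quadratic terms yields $\ci[u]\leq\liminf_k\ci[\tilde u_k]=m_\ci(\la)$. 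Since $\norm{u}_{L^2}^2=\la$, $u$ is a minimizer (and equality throughout forces $\tilde u_k\to u$ in $H^1\cap\dot H^{-1}$).

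The genuine difficulty is not internal to this proposition: the whole argument rests on Lemma~\ref{nonv}, whose competitor constructions supply both the non-vanishing $I_3>0$ that kills the vanishing scenario and the inequality $m_\ci(\la)<\la$ that powers strict subadditivity and thereby kills dichotomy. Modulo that lemma, the only points needing a bit of care are the $o(1)$-bookkeeping for the nonlocal term $\norm{\p_x^{-1}\cdot}_{L^2}^2$ in the Brezis--Lieb splitting (painless, since weak $L^2$ convergence handles it directly) and the sign-indefiniteness of $\int|u|^pu\,dx$ in the $\ci$-case (handled by the same sign-flip reduction already invoked in Lemma~\ref{nonv}).
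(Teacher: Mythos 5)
Your argument is correct and rests on the same two pillars as the paper's --- Lemma \ref{nonv} and strict subadditivity of the cost function --- but it implements the compactness step by a genuinely different route. The paper works in the fourth-order formulation with density $\rho_k=|\p_x u_k|^2$ and runs the classical trichotomy: vanishing is excluded by a covering argument plus Gagliardo--Nirenberg, dichotomy by explicitly splitting $u_k$ with the cutoffs $\eta_1,\eta_2$ and comparing $I[u_k]$ with $I[a_ku_{k,1}]+I[b_ku_{k,2}]$, and tightness plus Rellich--Kondrachov then yields the minimizer. You instead work throughout in the second-order formulation \eqref{minIp}, exclude vanishing via Lions' lemma applied to $|u_k|^2$, and replace the dichotomy/tightness alternative by the weak-limit-plus-Brezis--Lieb splitting $\ci[\tilde u_k]=\ci[u]+\ci[r_k]+o(1)$, $\norm{\tilde u_k}_{L^2}^2=\norm{u}_{L^2}^2+\norm{r_k}_{L^2}^2+o(1)$, so that the intermediate case $0<\norm{u}_{L^2}^2<\la$ is killed directly by subadditivity; this trades the cutoff bookkeeping for an application of Brezis--Lieb to $t\mapsto|t|^pt$ and the identification of the weak $L^2$ limit of $\p_x^{-1}\tilde u_k$ with $\p_x^{-1}u$, both of which you handle. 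Your subadditivity proof is a quantitative variant of Lemma \ref{le:40}: both use the same rescaling $u\mapsto\sqrt{\theta}\,u$, and your lower bound $\f{1}{p+1}\int|u|^pu\,dx\geq\la-m_\ci(\la)-o(1)>0$ for near-minimizers (obtained from the pointwise inequality $\f12(\xi^2+\xi^{-2})\geq 1$ together with $m_\ci(\la)<\la$, which the competitor $v_{I,\eps}$ of Lemma \ref{nonv} supplies unconditionally) carries exactly the same information as the paper's use of $I_3>0$. Two small points, neither a gap: the passage from a minimizer of \eqref{minIp} back to one of \eqref{minI} is the reverse of what Lemma \ref{le:3} literally states, but is immediate from $m_I=m_\ci$ and $\vp_\la=\p_x^{-1}\phi_\la\in H^2$; and the sign-flip reduction to $\int|u|^pu\,dx\geq0$ is legitimate because $\ci[-u]\leq\ci[u]$ precisely when $\int|u|^pu\,dx\leq0$, so near-minimizers may always be so normalized.
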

{\bf Remark:} By Lemma \ref{le:3}, this implies the existence of solutions to \eqref{minIp} and \eqref{minJp}, in the corresponding range of $p$.
The proof of Proposition \ref{prop:20} is based on the method of concentrated compactness. In the compensation compactness arguments, the sub-additivity of the function $\la\to m(\la)$ plays a pivotal role. We begin with this  lemma. Note that the technical Lemma \ref{nonv} is needed precisely in this step.
\begin{lemma}(Strict sub-additivity holds)
	\label{le:40}
	Fix  $ \lambda >0$.
	\begin{enumerate}
		\item[i)] Suppose $ 1<p<3 $ . Then for all $ 0<\alpha<\lambda $  we have that the strict sub-additivity condition holds for $ m_I $, namely,
		      \begin{equation*}
			      m_I(\lambda)< m_I(\alpha)+m_I(\lambda-\alpha).
		      \end{equation*}
		\item[ii)] Suppose $ 1<p<5 $ . Then for all $ 0<\alpha<\lambda $  we have that the strict sub-additivity condition holds for $ m_J $, namely,
		      \begin{equation*}
			      m_J(\lambda)< m_J(\alpha)+m_J(\lambda-\alpha).
		      \end{equation*}
	\end{enumerate}
\end{lemma}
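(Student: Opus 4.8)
The plan is to establish the standard scaling-type subadditivity inequality, $m(\theta\lambda) < \theta\, m(\lambda)$ for $\theta>1$, and then deduce strict subadditivity from it. First I would record the homogeneity structure of the functionals: writing $\ci[u]=\frac12(\|u_x\|_2^2+\|\partial_x^{-1}u\|_2^2)-\frac{1}{p+1}\int|u|^p u\,dx$, if $u$ runs over the constraint set $\{\|u\|_2^2=\lambda\}$ then $w:=\sqrt{t}\,u$ runs over $\{\|w\|_2^2=t\lambda\}$, and
\[
\ci[w]=t\cdot\frac12\big(\|u_x\|_2^2+\|\partial_x^{-1}u\|_2^2\big)-t^{\frac{p+1}{2}}\frac{1}{p+1}\int|u|^p u\,dx.
\]
Since by Lemma \ref{nonv} (case ii)) any minimizer (or near-minimizer) of $m_\ci=m_I$ has $\int|u|^p u\,dx\geq I_3>0$, and since $\frac{p+1}{2}>1$, for $t>1$ the nonlinear term is amplified strictly more than linearly. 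Taking $u$ to be a near-minimizer for $m_I(\lambda)$ and using $m_I=m_\ci$ (Lemma \ref{le:3}), this yields $m_I(t\lambda)\le t\, m_I(\lambda) - (t^{\frac{p+1}{2}}-t)\cdot\frac{c}{p+1}$ for a positive constant $c$ coming from the uniform lower bound on $\int|u|^pu$, hence $m_I(t\lambda)< t\, m_I(\lambda)$ for all $t>1$. The same computation with $\cj$ and case i) of Lemma \ref{nonv} gives $m_J(t\lambda)<t\,m_J(\lambda)$ for $t>1$ whenever $1<p<5$.

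Next I would pass from this homogeneity inequality to strict subadditivity by the classical argument: fix $0<\alpha<\lambda$, set $\theta=\lambda/\alpha>1$ and $\theta'=\lambda/(\lambda-\alpha)>1$, and write
\[
m_I(\lambda)=\frac{\alpha}{\lambda}m_I(\lambda)+\frac{\lambda-\alpha}{\lambda}m_I(\lambda)
= \frac{\alpha}{\lambda}m_I(\theta\alpha)+\frac{\lambda-\alpha}{\lambda}m_I(\theta'(\lambda-\alpha))
< \frac{\alpha}{\lambda}\cdot\theta\, m_I(\alpha)+\frac{\lambda-\alpha}{\lambda}\cdot\theta'\, m_I(\lambda-\alpha)
= m_I(\alpha)+m_I(\lambda-\alpha),
\]
which is exactly the claim, and identically for $m_J$.

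There is one technical point to handle carefully before the strict inequality $m_I(t\lambda)<t\,m_I(\lambda)$ is legitimate: I must ensure the constant $c>0$ is genuinely uniform, i.e. that $\int|u|^p u\,dx$ is bounded below along a minimizing sequence by a fixed positive number, not merely that the limit $I_3$ is positive. This follows from Lemma \ref{nonv} together with the fact that $I_3=\lim_k\int|u_k|^pu_k\,dx>0$, so for $k$ large $\int|u_k|^pu_k\,dx\ge I_3/2$; rescaling such a $u_k$ and letting $k\to\infty$ after taking the infimum gives the required bound. I expect this bookkeeping — making sure the gain is extracted from a near-minimizer rather than a (possibly nonexistent at this stage) exact minimizer — to be the only real subtlety; the rest is the routine convexity/homogeneity manipulation above. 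An alternative, essentially equivalent route avoiding even this point is to argue by contradiction: if $m_I(\lambda)=m_I(\alpha)+m_I(\lambda-\alpha)$, take near-minimizers, rescale them to the constraint $\lambda$, and use that the rescaling strictly lowers the energy by the superlinearity of the nonlinear term (which is nonzero by Lemma \ref{nonv}), contradicting that $m_I(\lambda)$ is the infimum.
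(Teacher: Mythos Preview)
Your proposal is correct and follows essentially the same approach as the paper: both establish that $\lambda\mapsto m(\lambda)/\lambda$ is strictly decreasing (equivalently, $m(t\lambda)<t\,m(\lambda)$ for $t>1$) via the scaling $u\mapsto\sqrt{t}\,u$ and the superlinear amplification of the nonlinear term, invoking Lemma~\ref{nonv} for strictness, and then pass to strict subadditivity by a standard manipulation. The only cosmetic differences are that the paper works directly with $I$ (constraint $\|u_x\|_2^2=\lambda$) rather than $\ci$, and deduces subadditivity via a WLOG $\alpha\ge\lambda/2$ argument instead of your symmetric two-term decomposition; these are equivalent routine variants.
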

\begin{proof}
	The proofs of i) and ii) are identical. Let us prove i). First, we claim that the function $ \frac{m_I(\la)}{\la} $ is strictly decreasing. Indeed,
	\begin{align*}
		m_I(\la) & = \inf_{\norm{u_x}_2^2=\lambda}\{\frac{1}{2}\int_{\rone}|u_{xx}|^2+|u|^2dx -\frac{1}{p+1}\int_{\rone}|u_x|^p u_x dx\}                                  \\
		         & = \frac{\la}{\al}\inf_{\norm{u_x}_2^2=\al}\{\frac{1}{2}\int_{\rone}|u_{xx}|^2+|u|^2dx -\frac{(\la/\al)^{\frac{p-1}{2}}}{p+1}\int_{\rone}|u_x|^p u_x dx \\
		         & < \frac{\la}{\al} m_I(\al),
	\end{align*}
	where the strict inequality follows from the fact that by lemma \ref{nonv} there exist a minimizing sequence $ \{u_k\}_{k=1}^{\infty} $ such that $ \lim_{k\rightarrow \infty }\int_\rone |(u_k)_x|^p (u_k)_x dx >0 $. Finally, assuming that $ \al\in [\la/2,\la) $ (otherwise we argue  with $ \la-\al $), since $ \frac{m_I(\la)}{\la} $ is decreasing,  we get
	\begin{equation*}
		m_I(\la) <\frac{\la}{\al}m_I(\al)= m_I(\al) + \frac{\la-\al}{\al}m_I(\al)\leq m_I(\al)+ m_I(\la-\al).
	\end{equation*}
\end{proof}
\begin{proof}(Proposition \ref{prop:20})

	Define $ \rho_k(x) = |\p_x u_k|^2 $. By the concentration compactness lemma at least one of the following holds:
	\begin{itemize}
		\item [i)] \textit{Tightness.} There exists $ \{y_k\}_{k=1}^{\infty} $ such that for all $ \varepsilon>0 $ there exists an $ R_\varepsilon>0 $ satisfying
		      \begin{equation*}
			      \int_{B(y_k,R_\varepsilon )}\rho_k dx \geq \int_\rone \rho_k dx -\varepsilon
		      \end{equation*}
		\item [ii)] \textit{Vanishing.} For every $ R>0 $
		      \begin{equation*}
			      \lim_{k\rightarrow \infty}\sup_{y\in \rone}\int_{B(y,R)}\rho_k dx = 0.
		      \end{equation*}
		\item [iii)] \textit{Dichotomy}. There exists an $ \alpha\in (0,\la) $ such that for every $ \varepsilon>0 $ there exist $ R $, $ R_k\rightarrow\infty $, $ y_k $  and $ k_0 $ such that for all $ k\geq k_0 $
		      \begin{equation*}
			      \left | \int_{|x-y_k|<R}\rho_k dx  - \al \right | <\varepsilon, \left | \int_{|x-y_k|>R_k}\rho_k dx  - (\la-\al) \right | <\varepsilon, \left | \int_{R<|x-y_k|<R_k}\rho_k dx  \right | <\varepsilon.
		      \end{equation*}
	\end{itemize}
	\subsubsection{Vanishing cannot occur} 	First,  suppose vanishing occurs. Let
	$ 0\leq\chi\leq 1 $ be  a smooth bump function  supported on $ (-2,2) $ with $ \chi \equiv 1$ on $  (-1,1) $.   Applying GNS iequality we get
	\begin{align}
		\begin{split}
			\int_{B(y,1)}|(u_k)_x|^p(u_k)_xdx
			&\leq \int_{\rone}|(u_k)_x\chi(x-y)|^{p+1}dx\\
			&\leq \norm{(u_k)_x\chi(x-y)}_2^{\frac{p+3}{2}} \norm{((u_k)_x\chi(x-y))_x}_2^{\frac{p-1}{2}}\\
			&\leq C \norm{(u_k)_x}_{L^2(B(y,2))}^{\frac{p+3}{2}},
			\label{pplusbound}
		\end{split}
	\end{align}
	where in the last line, we have used that $\|u_k\|_{H^2}$ is a bounded sequence.
	By the assumed vanishing, choose $ k_0  $ so large that for all $ k\geq k_0 $
	\begin{equation*}
		\int_{B(y,2)}\rho_k dx <\varepsilon
	\end{equation*}
	for all $ y\in R $. We can cover the real line with intervals
	$ \cup_{n=0}^{\infty}B(y_n,2) $ so that each $ x\in \rone  $ belongs to at most ten  intervals and  $ \cup_{n=0}^{\infty}B(y_n,1) $ still covers the whole line. Using \eqref{pplusbound}, we obtain
	\begin{align*}
		\int_{\rone}|(u_k)_x|^{p} (u_k)_xdx
		 & \leq\int_{\rone}|(u_k)_x|^{p+1}dx\leq \sum_{n=0}^{\infty}\int_{B(y_n,1)}|(u_k)_x|^{p+1} dx                                                         \\
		 & \leq C\varepsilon^{\frac{p-1}{2}} \sum_{n=0}^{\infty}  \norm{(u_k)_x}^2_{L^2{B(y_n,2)}}\leq 3 C \varepsilon^{\frac{p-1}{2}} \norm{(u_k)_x}^2_{L^2}
	\end{align*}
	which  is a contradiction, for sufficiently small $\eps>0$. Indeed, recall that $\sup_k \|u_k\|_{H^2}<\infty$, while  by Lemma \ref{nonv}, $\inf_k 	\int_{\rone}|(u_k)_x|^{p} (u_k)_xdx>0$.   Hence, vanishing cannot occur.
	\subsubsection{Dichotomy cannot occur}
	Suppose dichotomy occurs.  Let  $ \eta_1,\eta_2 \in C^\infty(\rone) $, satisfying $ 0\leq \eta_1, \eta_2\leq 1 $ and
	\begin{equation*}
		\eta_1(x) = \begin{cases}
			1, & |x|\leq1,  \\
			0, & |x|\geq 2, \\
		\end{cases}
		,
		\quad
		\eta_2(x)= \begin{cases}
			1, & |x|\geq 1,   \\
			0, & |x|\leq 1/2. \\
		\end{cases}
	\end{equation*}
	Dichotomy implies that there exists a subsequence of $ \{u_k\}_{k=1}^{\infty} $ (re-indexed to be $ \{u_k\}_{k=1}^{\infty} $ again) and  sequences $ \{R_k\}_{k=1}^{\infty} \in\rone$, with $ \lim_{k\rightarrow\infty}R_k =\infty$
	and $ \{y_k\}_{k=1}^{\infty} \in \rone$ such that

	\begin{equation*}
		\lim_{k\rightarrow\infty}\int_\rone |(u_{k,1})_x|^2 dx  = \alpha,\quad
		\lim_{k\rightarrow\infty}\int_\rone |(u_{k,2})_x|^2 dx  = \lambda - \alpha,\quad
		\int_{R_k/5\leq |x-y_k|<R_k} |(u_{k})_x|^2 dx  \leq \frac{1}{k},\quad
	\end{equation*}
	where
	\begin{equation*}
		u_{k,1}(x) = u_k(x)\eta_1\left (\frac{x-y_k}{R_k/5}\right ),\quad
		u_{k,2}(x) = u_k(x)\eta_2\left (\frac{x-y_k}{R_k}\right ).
	\end{equation*}
	Let $ \{a_k\}_{k=1}^{\infty} $ and $ \{b_k\}_{k=1}^{\infty} $ be sequences of real numbers converging to $ 1 $ such that
	\begin{equation*}
		\int_{\rone}|(a_ku_{k,1})_x|^2 dx=\al,\quad  \int_{\rone}|(b_ku_{k,1})_x|^2 dx=\la-\alpha,
	\end{equation*}
	for all $ k $. It is easy to see that the following holds
	\begin{align*}\label{difference}
		\begin{split}
			I[u_k]-I[a_k u_{k,1}] - I[b_k u_{k,1}]&= \frac{1}{2}\int_{\rone}\left (1-\eta_1^2\left (\frac{x-y_k}{R_k/5}\right )-\eta^2_2\left (\frac{x-y_k}{R_k}\right )\right )
			\left( |(u_k)_{xx}|^2+|u_k|^2\right)dx \\
			&+ O\left (\frac{1}{R_k}\right ) +O\left (\frac{1}{k}\right ) + O\left (|1-a_k^2|\right ) + O\left (|1-b_k^2|\right ).\\
		\end{split}
	\end{align*}
	It follows that
	\begin{equation*}
		I[u_k] \geq  m_I(\alpha) + m_I(\la -\al) +\be_k,
	\end{equation*}
	where $ \be_k \rightarrow 0 $. Taking the limit as $ k\rightarrow \infty $  we obtain
	\begin{equation*}
		m_I(\la) \geq m_I(\al) + m_I(\la-\al),
	\end{equation*}
	which contradicts  the strict sub-additivity condition shown in  lemma \ref{40}. Hence dichotomy is not an option.
	\subsubsection{Tightness implies existence of a minimizer}
	Now, using tightness we show existence of a minimizer. We show it only for the $ I $ functional, but the steps for the $ J $ functional are  exactly the same, if not easier.
	Define $ v_k(x) = u_k(x-y_k) $. Since $ \{v_k\}_{k=1}^{\infty} $ is bounded on $ H^2(\rone) $ there exists a weakly convergent subsequence to some $ v\in H^2(\rone) $, renamed to $ \{v_k\}_{k=1}^{\infty} $ again. From tightness it follows that   for all $ \varepsilon >0 $ there exists an $ R_\varepsilon $  satisfying
	\begin{equation}\label{tightness}
		\int_{B^c(0,R_\varepsilon)}|(v_k)_x|^2dx <\varepsilon.
	\end{equation}
	By the Rellich-Kondrachov theorem $ H^1(B(0,R_\varepsilon)) $  compactly embeds into $ L^2(B(0,R_\varepsilon)) $. So, there exists a subsequence of $ \{v_k\}_{k=1}^{\infty} $  such that $ (v_k)_x \rightarrow v_x $ strongly on $ L^2(B(0,R_\varepsilon)) $. Taking $ \varepsilon = 1/n $ and letting $ n\rightarrow\infty $  in \eqref{tightness} we can find a subsequence of $ \{v_k\}_{k=1}^{\infty} $, again renamed to be the same, so that $ (v_k)_x  \rightarrow v_x$ strongly on $ L^2(\rone) $. With this in hand, we can show that
	\begin{equation}\label{312}
		\lim_{k\rightarrow \infty }\int_{\rone}|(v_k)_x|^p(v_k)_xdx=\int_{\rone}|v_x|^pv_xdx.
	\end{equation}
	Indeed,
	\begin{align*}
		\left |\int_{\rone}|(v_k)_x|^p(v_k)_xdx-\int_{\rone}|v_x|^pv_x dx\right |
		 & \leq C \int_{\rone} |(v_k)_x -v_k|(|(v_k)_x|^p+|v_x|^p)dx                                \\
		 & \leq C \norm{(v_k)_x-v_x}_{2}\left (\norm{v_x}_2+\norm{(v_k)_x}_2\right )^p\rightarrow0,
	\end{align*}
	where we have used the inequality $ ||x|^px- |y|^py| \leq C |x-y|\left (|x|^p+|y|^p\right ) $ which holds for all real numbers $ x$ and $ y $, the Cauchy-Schwartz inequality and the fact that $ H^1(\rone) $ embeds into $ L^\infty(\rone) $ (so that the sequence $ \{(v_k)_x\} _{k=1}^{\infty}$ is  bounded in $ L^\infty (\rone)$).

	Finally, the lower semi-continuity of norms with respect to weak convergence and \eqref{312} imply that $ m_I(\la)=\lim_{k\rightarrow\infty} I[v_k] \geq I[v]  $, which means that $ I[v] = m_I(\lambda) $ and $ v $ is the minimizer. Proposition \ref{prop:20} is thus proved in full.
\end{proof}
The next order of business is to derive the Euler-Lagrange equations.
\subsection{The Euler-Lagrange equations - fourth order formulations}
\begin{proposition}
	\label{prop:25}
	\quad
	\begin{itemize}
		\item For $ 1< p < 3 $ and $ \la > 0 $ there exists a real number  $ \omega$ such that the minimizer of the constrained minimization problem \eqref{minI}  $ \phi_\la $ satisfies the Euler-Lagrange equation
		      \begin{equation}
			      \label{ELI}
			      \phi_\la'''' + \om(\la)\phi_\la''+\phi_\la + \partial_x(|\phi_\la'|^p) =0,
		      \end{equation}
		      where $ \om=\omega(\lambda, \phi_\la) = \frac{1}{\lambda}\int_\rone[|\phi_\lambda''|^2 +|\phi_\lambda|^2-|\phi_\lambda'|^p\phi_\lambda']dx $. 
		\item For $ 1< p < 5 $ and $ \la > 0  $ there exists a function $ \omega$ such that the minimizer of the constrained minimization problem \eqref{minJ}  $ \phi_\la $ satisfies the Euler-Lagrange equation
		      \begin{equation}
			      \label{ELJ}
			      \phi_\la'''' + \om(\la)\phi_\la''+\phi_\la + \partial_x(|\phi_\la'|^{p-1}\phi_\la') =0,
		      \end{equation}
		      where $ \om=\omega(\lambda, \phi_\la)\ = \frac{1}{\lambda}\int_\rone|\phi_\lambda''|^2 +|\phi_\lambda|^2-|\phi_\lambda'|^{p+1}dx $.
	\end{itemize}
\end{proposition}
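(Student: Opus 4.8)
The plan is to derive both Euler--Lagrange identities by the Lagrange multiplier method, implemented through an explicit admissible-variation (rescaling) argument so that no abstract multiplier theorem is invoked. I carry out the details for \eqref{ELI}; the derivation of \eqref{ELJ} is identical after replacing the nonlinear density $|u_x|^p u_x$ by $|u_x|^{p+1}$ and tracking the resulting change in its first variation.

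Fix a minimizer $\phi_\la$ of \eqref{minI} and set $G[u]:=\int_\rone|u_x|^2\,dx$, so $G[\phi_\la]=\la>0$. For an arbitrary test function $h\in H^2(\rone)$ consider the curve $w(t):=c(t)(\phi_\la+t h)$, where $c(t)>0$ is chosen so that $G[w(t)]=\la$ for small $t$, i.e. $c(t)^2=\la/G[\phi_\la+t h]$. Since $G$ is a smooth quadratic functional and $G[\phi_\la]=\la>0$, the function $c$ is well defined and differentiable near $t=0$, with $c(0)=1$ and, by differentiating $c(t)^2G[\phi_\la+th]=\la$, $c'(0)=-\tfrac1\la\langle\phi_\la',h'\rangle$. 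Because $w(t)$ stays on the constraint manifold and $\phi_\la$ is a minimizer, $\tfrac{d}{dt}\big|_{t=0}I[w(t)]=0$.

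It remains to unpack this identity. First, $t\mapsto I[\phi_\la+t\psi]$ is differentiable at $t=0$ for every $\psi\in H^2$: the quadratic part is trivial, while for the nonlinear part one uses that $x\mapsto|x|^px$ is $C^1$ together with $\phi_\la',\psi'\in H^1\subset L^\infty\cap L^2$ and dominated convergence (the range $1<p<3$ is more than enough), giving
\begin{equation*}
DI[\phi_\la](\psi)=\langle\phi_\la'',\psi''\rangle+\langle\phi_\la,\psi\rangle-\langle|\phi_\la'|^p,\psi'\rangle .
\end{equation*}
By the chain rule, $\tfrac{d}{dt}\big|_{t=0}I[w(t)]=DI[\phi_\la](c'(0)\phi_\la+h)$. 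Since $DI[\phi_\la](\phi_\la)=\int_\rone|\phi_\la''|^2+|\phi_\la|^2-|\phi_\la'|^p\phi_\la'\,dx=:\la\om$, plugging in $c'(0)=-\tfrac1\la\langle\phi_\la',h'\rangle$ collapses the vanishing-derivative identity to
\begin{equation*}
DI[\phi_\la](h)=\om\,\langle\phi_\la',h'\rangle\qquad\text{for all }h\in H^2(\rone).
\end{equation*}
Rewriting the right-hand side as $-\om\langle\phi_\la'',h\rangle$ and reading the resulting relation in the sense of distributions yields precisely the weak form of $\phi_\la''''+\om\phi_\la''+\phi_\la+\partial_x(|\phi_\la'|^p)=0$, with $\om=\tfrac1\la\int_\rone|\phi_\la''|^2+|\phi_\la|^2-|\phi_\la'|^p\phi_\la'\,dx$ as claimed. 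Repeating the computation with $|u_x|^{p+1}$ (whose first variation contributes $\langle|\phi_\la'|^{p-1}\phi_\la',\psi'\rangle$, hence $\om=\tfrac1\la\int_\rone|\phi_\la''|^2+|\phi_\la|^2-|\phi_\la'|^{p+1}\,dx$) gives \eqref{ELJ} for $1<p<5$.

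The only genuinely delicate point --- what I would call the main obstacle --- is the legitimacy of these admissible variations, which hinges on $DG[\phi_\la]\not\equiv0$, i.e. on being able to ``spend'' the constraint. This is immediate here: $\phi_\la'\neq0$, for otherwise $\la=\|\phi_\la'\|_{L^2}^2=0$, and it is exactly this fact that makes $c(t)$ well defined and smooth near $t=0$; no further transversality is needed. The remaining technicalities --- differentiability of the nonlinear functionals and the integrations by parts passing to the distributional formulation --- are routine given the Sobolev embedding $H^1\hookrightarrow L^\infty$ and $p>1$.
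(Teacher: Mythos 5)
Your proposal is correct and follows essentially the same route as the paper: the authors use exactly the same renormalized variation $u_\delta=\sqrt{\la}\,(\phi_\la+\delta h)/\|\phi_\la'+\delta h'\|$, expand $I[u_\delta]$ to first order in $\delta$, and conclude that the linear coefficient vanishes, yielding the same weak form of \eqref{ELI}--\eqref{ELJ} with the same multiplier $\om$. Your version merely packages the computation via the chain rule and $c'(0)=-\la^{-1}\langle\phi_\la',h'\rangle$ and adds the (routine) justification of differentiability of the nonlinear term, which the paper leaves implicit.
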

\begin{proof}
	Consider $ u_\delta =  \frac{\phi_\lambda + \delta h}{\norm{\phi_\lambda' + \delta h'}}\sqrt{\lambda}$, where $h$ is a test function.
	Clearly, $u_\delta$ satisfies the constraint and expanding $I[u_\delta]$ in $\delta$ we get
	$$	I[u_\delta] = m_I(\lambda)  + \delta \left( \int_\rone \phi_\lambda''h''dx +   \phi_\lambda h + |\phi_\lambda'|^ph' - \frac{1}{\lambda}(\int_{\rone}  |\phi_\lambda''|^2 +|\phi_\lambda|^2-|\phi_\lambda'|^p\phi_\lambda'dx) \int_{\rone}\phi_\la'h'dx\right)\\
		+	O(\de^2).
	$$
	Since $ I[u_\delta] \geq m_I[\la]$ for all $ \de\in \rone $    we  conclude that

	\begin{equation*}
		\dpr{ \phi_\lambda''''+\phi_\la  +\omega(\lambda) \phi_\lambda'' + (|\phi_\lambda'|^p)'}{h}=0
	\end{equation*}
	with $ \omega = \frac{1}{\lambda}\int_\rone|\phi_\lambda''|^2 +|\phi_\lambda|^2-|\phi_\lambda'|^p\phi_\lambda'dx $, holds for all $ h $, i.e.,   $ \phi_\la $ is a distributional solution of the Euler-Lagrange Equation \eqref{ELI}.
	For the minimizers of \eqref{minJ}, we proceed analogously to establish \eqref{ELJ}.
\end{proof}
\subsection{The Euler-Lagrange equations - second  order formulation}
\begin{proposition}
	\label{prop:30}
	\begin{itemize}
		\item For $ 1< p < 3 $, there exists a function $ \omega (\la)$ such that for all $\la>0$, the minimizer of the constrained minimization problem \eqref{minIp}  $ \phi_\la $ satisfies the Euler-Lagrange equation
		      \begin{equation}
			      \label{347}
			      \p_x^2\phi_\la +\p_x^{-2}\phi_\la+ \om(\la)\phi_\la +|\phi_\la|^p =0,
		      \end{equation}
		      where
		      \begin{equation}
			      \label{3477}
			     \om= \omega(\lambda, \phi_\la) = \frac{1}{\lambda}\int_\rone[|\p_x\phi_\lambda|^2 +|\p_x^{-1}\phi_\lambda|^2-|\phi_\lambda|^p\phi_\lambda] dx.
		      \end{equation}
		      In addition,  the linearized operator $ \cl_+ :=-\p_x^2 -\p_x^{-2} -\om(\la) - p|\phi_\la|^{p-2}\phi_\la$ satisfies $\cl_+|_{\{\phi_\la\}^\perp}\geq 0$. In fact, $\cl_+$  has exactly
		      one negative eigenvalue.
		\item For $ 1< p < 5 $, there is $ \omega (\la)$, such that for all $\la>0$, the minimizer of the constrained minimization problem \eqref{minJp}  $ \phi_\la $ satisfies\footnote{in a weak sense}  the Euler-Lagrange equation
		      \begin{equation}
			      \label{352}
			      \p_x^{2}\phi_\la +\p_x^{-2}\phi_\la+ \om(\la)\phi_\la +|\phi_\la|^{p-1}\phi_\la =0,
		      \end{equation}
		      where $ \om=\omega(\lambda, \phi_\la) = \frac{1}{\lambda}\int_\rone|\phi_\lambda'|^2 +|\p_x^{-1}\phi_\lambda|^2-|\phi_\lambda|^{p+1}dx $.  The operator $ \cl_+  =-\p_x^2 -\p_x^{-2} -\om(\la) - p|\phi_\la|^{p-1}$ has $\cl_+|_{\{\phi_\la\}^\perp}\geq 0$ and it possesses exactly
		      one negative eigenvalue.
	\end{itemize}
\end{proposition}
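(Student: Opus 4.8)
\emph{Strategy.} The proposition has two halves: the derivation of the second--order Euler--Lagrange equations \eqref{347}, \eqref{352} together with the multiplier formulas for $\om(\la)$, and the spectral statement about $\cl_+$. For the first half I would invoke Lemma~\ref{le:3}: a minimizer $\phi_\la$ of \eqref{minIp} (resp. \eqref{minJp}) is of the form $\phi_\la=\varphi_\la'$ with $\varphi_\la$ a minimizer of \eqref{minI} (resp. \eqref{minJ}), and by Proposition~\ref{prop:25} it solves $\varphi_\la''''+\om\varphi_\la''+\varphi_\la+\p_x(|\varphi_\la'|^p)=0$ (resp. with $|\varphi_\la'|^{p-1}\varphi_\la'$). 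Integrating this profile equation over $\rone$ — all terms except $\varphi_\la$ are exact derivatives of exponentially decaying functions by Proposition~\ref{prop:exp} — gives $\int_\rone\varphi_\la=0$, hence $\p_x^{-1}\varphi_\la\in L^2$, i.e. $\phi_\la\in\dot H^{-2}$, so that $\p_x^{-2}\phi_\la$ is meaningful; applying $\p_x$ and then $\p_x^{-2}$ converts the profile equation into \eqref{347} (resp. \eqref{352}). Formula \eqref{3477} then follows by pairing \eqref{347} with $\phi_\la$ and using $\langle\p_x^2\phi_\la,\phi_\la\rangle=-\norm{\p_x\phi_\la}^2$, $\langle\p_x^{-2}\phi_\la,\phi_\la\rangle=-\norm{\p_x^{-1}\phi_\la}^2$ and $\norm{\phi_\la}^2=\la$; the $J$-formula is identical. (One may also obtain \eqref{347} directly, by expanding $\ci[u_\de]$ for $u_\de=\sqrt\la\,(\phi_\la+\de h)\norm{\phi_\la+\de h}^{-1}$ exactly as in the proof of Proposition~\ref{prop:25}.)

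\emph{Spectral set-up.} The operator $\cl_+=-\p_x^2-\p_x^{-2}-\om-p|\phi_\la|^{p-2}\phi_\la$ (resp. with potential $-p|\phi_\la|^{p-1}$) is self-adjoint on $L^2(\rone)$ with domain $H^2\cap\dot H^{-2}$. Since the Fourier symbol $\xi^2+\xi^{-2}$ of $-\p_x^2-\p_x^{-2}$ satisfies $\xi^2+\xi^{-2}\ge 2$, and the potential term is bounded and exponentially decaying (Proposition~\ref{prop:exp}), Weyl's theorem gives $\sigma_{ess}(\cl_+)=[2-\om,\infty)$, which lies in $(0,\infty)$ because $\om(\la)<2$ (recorded in Theorem~\ref{theo:5}). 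Consequently the part of $\sigma(\cl_+)$ in $(-\infty,0]$ consists of finitely many eigenvalues of finite multiplicity, and it remains to count them; write $n(\cl_+)$ for the number of negative ones.

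\emph{The index count.} For the upper bound: take $h\perp\phi_\la$ in the form domain and set $g(\de):=\ci[u_\de]$ with $u_\de$ as above. Then $\norm{\phi_\la+\de h}^2=\la+\de^2\norm{h}^2$, so $u_\de=\phi_\la+\de h-\tfrac{\de^2}{2\la}\norm{h}^2\phi_\la+O(\de^3)$, and a Taylor expansion — using the Euler--Lagrange equation both to annihilate the first-order term and to evaluate the $O(\de^2)$ contribution of the normalization correction — yields $g''(0)=\langle\cl_+ h,h\rangle$. Minimality $g(\de)\ge g(0)$ forces $\langle\cl_+ h,h\rangle\ge 0$, so $\cl_+|_{\{\phi_\la\}^\perp}\ge 0$; since $\{\phi_\la\}^\perp$ has codimension one, the min--max principle gives $n(\cl_+)\le 1$. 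For the lower bound, compute using \eqref{347} that $\cl_+\phi_\la=-\p_x^2\phi_\la-\p_x^{-2}\phi_\la-\om\phi_\la-p|\phi_\la|^{p-2}\phi_\la\cdot\phi_\la=|\phi_\la|^p-p|\phi_\la|^p=(1-p)|\phi_\la|^p$, whence $\langle\cl_+\phi_\la,\phi_\la\rangle=(1-p)\int_\rone|\phi_\la|^p\phi_\la\,dx=(1-p)I_3<0$, since $p>1$ and $I_3>0$ by Lemma~\ref{nonv} together with the identification \eqref{312} of $\int_\rone|\phi_\la|^p\phi_\la\,dx$ with the limit $I_3$; in the $J$-case the same computation gives $\cl_+\phi_\la=(1-p)|\phi_\la|^{p-1}\phi_\la$ and $\langle\cl_+\phi_\la,\phi_\la\rangle=(1-p)J_3<0$. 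Hence $n(\cl_+)\ge 1$, and altogether $n(\cl_+)=1$.

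\emph{Main obstacle.} The delicate step is the second-variation computation: for $p\in(1,2)$ the nonlinearities $|u|^pu$ and $|u|^{p-1}u$ are only $C^1$, so the second-order expansion of $\de\mapsto\int_\rone|u_\de|^pu_\de\,dx$ is not automatic — it is justified via the boundedness and exponential decay of $\phi_\la$ (Proposition~\ref{prop:exp}) and dominated convergence — and one must check that the $O(\de^2)$ normalization correction feeds in exactly the $-\om\norm{h}^2$ term needed to assemble $\langle\cl_+ h,h\rangle$. A secondary point, if not already in hand, is the bound $\om(\la)<2$, which can be obtained by testing the constrained functional (or $\cl_+$ itself) against states whose Fourier transform concentrates near $\xi=\pm 1$, where $\xi^2+\xi^{-2}$ attains its minimum. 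The remaining ingredients — self-adjointness, Weyl's theorem, and the min--max counting — are routine.
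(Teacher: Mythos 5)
Your proposal is correct. The spectral half --- obtaining $\dpr{\cl_+ h}{h}\geq 0$ for $h\perp\phi_\la$ from the second variation of $\ci$ along the normalized perturbation $u_\de$, then computing $\cl_+\phi_\la=(1-p)|\phi_\la|^p$ from the Euler--Lagrange equation so that $\dpr{\cl_+\phi_\la}{\phi_\la}=-(p-1)\int_\rone|\phi_\la|^p\phi_\la\,dx<0$ --- is exactly the paper's argument, and your identification of $\int_\rone|\phi_\la|^p\phi_\la\,dx$ with the positive limit $I_3$ of Lemma \ref{nonv} via \eqref{312} is the right way to justify the sign, which the paper merely ``recalls.'' Where you genuinely diverge is the derivation of \eqref{347}: the paper extracts both the Euler--Lagrange equation and the second-variation inequality from a single Taylor expansion of $\ci[u_\de]$ to order $\de^2$, whereas your primary route passes through the fourth-order equation of Proposition \ref{prop:25} via Lemma \ref{le:3} (using integration of the profile equation plus the exponential decay of Proposition \ref{prop:exp} to make sense of $\p_x^{-2}\phi_\la$) and then recovers the multiplier $\om$ by pairing with $\phi_\la$. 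Both routes work, and you correctly note that the fourth-order detour does not spare you the second-order expansion, which you set up properly, including the $-\om\norm{h}^2$ contribution of the normalization correction. Your explicit location of the essential spectrum in $[2-\om,\infty)$ via Weyl's theorem tightens the count ``exactly one negative eigenvalue'' (the paper leaves the min--max step implicit); just note that in the paper $\om<2$ is recorded only afterwards, in Corollary \ref{cor:2}, from the Pohozaev identities and $m(\la)<\la$ --- both available once the Euler--Lagrange equation is in hand --- so there is no circularity, and your alternative test-function argument for $\om<2$ would serve equally well.
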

\begin{proof}
	The derivation of the Euler-Lagrange equations is  pretty similar to the one presented in the fourth order context, Proposition \ref{prop:25}.
	For an arbitrary test function $h$ and $\de\in \rone$, consider  $ u_\delta = \sqrt{\lambda} \frac{\phi_\lambda + \delta h}{\norm{\phi_\lambda + \delta h}}$. Since $u_\de$   satisfies the constraint $\|u_{\de}\|_{L^2}^2=\la$,   expand $I[u_\delta]$ in powers of $\delta$.   In principle, in this calculation, we need to only account for up to linear terms and collect all else in $O(\de^2)$. However, due to a future need to understand the precise form of terms quadratic in $\de$, we include them as well. We get
	\begin{align*}
	&	\ci[u_{\delta}]  =m_{\ci}(\lambda)                                                                                                                                                                                                                                                                                            \\
		                & +\delta\left(\int_{\rone}\phi_{\lambda}'h'dx+\int_{\rone}\partial_{x}^{-1}\phi_{\lambda}\partial_{x}^{-1}hdx-\int_{\rone}|\phi_{\lambda}|^{p}h-\frac{1}{\lambda}\int_{\rone}|\phi_{\lambda}'|^{2}+|\partial_{x}^{-1}\phi_{\lambda}|^{2}-|\phi_{\lambda}|^{p}\phi_{\lambda}dx\int_{\rone}\phi_{\la}hdx\right) \\
		                & +\frac{\delta^{2}}{2}\left(\int_{\rone}|h'|^{2}+|\partial_{x}^{-1}h|^{2}dx-p\int_{\rone}|\phi_{\lambda}|^{p-2}\phi_{\lambda}|h|^{2}dx\right)                                                                                                                                                                 \\
		                & -\frac{\delta^{2}}{2}\frac{1}{\lambda}\left(\int_{\rone}|\phi_{\la}'|^{2}+|\partial_{x}^{-1}\phi_{\lambda}|^{2}-|\phi_{\lambda}|^{p}\phi_{\la}dx\right)\int_{\rone}|h|^{2}dx                                                                                                                                 \\
		                & +\delta^{2}\frac{\int_{\rone}\phi_{\lambda}hdx}{\lambda}\left((p+1)\int_{\rone}|\phi_{\la}|^{p}hdx-2(\int_{\rone}\phi_{\lambda}'h'dx+\int_{\rone}\partial_{x}^{-1}\phi_{\lambda}\partial_{x}^{-1}hdx)\right)                                                                                                 \\
		                & +\delta^{2}\left(\frac{\int_{\rone}\phi_{\lambda}hdx}{\lambda}\right)^{2}\left(2\int_{\rone}|\phi_{\lambda}'|^{2}+|\partial_{x}^{-1}\phi_{\lambda}|^{2}dx-\frac{p+3}{2}\int|\phi_{\la}|^{p}\phi_{\lambda}dx\right)    + O(\de^3).                                                                          
	\end{align*}
	Since $ \ci[u_\delta] \geq m_\ci[\la]$ for all $ \de\in \rone $,     we  conclude that
	\begin{equation}
		\label{348}
		\left\langle \phi_{\lambda}''+\partial_{x}^{-2}\phi_{\la}+\omega \phi_{\lambda}+|\phi_{\lambda}|^{p},h\right\rangle = 0
	\end{equation}
	with $ \omega  = \frac{1}{\lambda}\int_\rone|\phi_\lambda'|^2 +|\p_x^{-1}\phi_\lambda|^2-|\phi_\lambda|^p\phi_\lambda dx $, holds for all $ h $.  That is    $ \phi_\la $ is a distributional solution of the Euler-Lagrange Equation. According to Proposition \ref{prop:nm}, this solution is in fact an element of $H^3$ and \eqref{347} is satisfied in the sense of $L^2$ functions. 

	The fact that $\phi_\la  $ is a minimizer also implies that the coefficient in front of $ \de^2 $ must be nonnegative. Choosing $ h $  orthogonal to $ \phi_\lambda $ with $ \norm{h} =1 $, we conclude that
	\begin{equation*}
		\left\langle - h'' - \p_x^{-2}h - p|\phi_\la|^{p-2}\phi_\la h-\om h   ,h  \right\rangle \geq 0,
	\end{equation*}
	i.e., the operator $ \cl_+ = -\partial_x^2 -\partial_x^{-2}-\om(\la) - p|\phi_\la|^{p-2}\phi_\la$, satisfies
	$ \left\langle  \cl_+ h,h  \right\rangle \geq 0$ for all $h $ orthogonal to $ \phi_\la $ with $ \norm{h}=1 $, which implies that it has at most one negative eigenvalue.  On the other hand, recalling that $\int_\rone |\phi_\la|^p\phi_\la dx>0$, we compute
	\begin{equation}
	\label{lplus}
		\left\langle  \cl_+  \phi_\la ,\phi_\la  \right\rangle = -(p-1)\int_\rone |\phi_\la|^p\phi_\la dx <0.
	\end{equation}
	So, $\cl_+$ has at least one negative eigenvalue.  Hence it has exactly one negative eigenvalue.
	The second part of the proposition is proven similarly expanding $ \cj[u_\de] $ in powers of $ \de $.
\end{proof}
The next corollary is a consequence of the Pohozaev's identities and the fact that our waves are minimizers\footnote{It is possible that the conclusions of Corollary \ref{cor:2} are valid, by just assuming that $\phi$ satisfies the elliptic profile equations, without being a constrained minimizer, but we leave this open at the present time} .

\begin{corollary}
	\label{cor:2}
	Let $\phi_\la$ be a minimizer for either one of \eqref{minI}, \eqref{minIp}, \eqref{minJ}, \eqref{minJp}.
	Then, for each $\la>0$,
	$ \om < 2 $.
\end{corollary}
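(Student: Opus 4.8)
The plan is to combine the Pohozaev identities (Lemma \ref{le:poh1} or Lemma \ref{le:poh2}) with the minimality of $\phi_\la$. Consider first a minimizer of \eqref{minI} (the other three cases being entirely analogous via Lemma \ref{le:3}). By Proposition \ref{prop:exp} the wave and its derivatives decay exponentially, so the identities \eqref{120} (or \eqref{140} for the $I$-nonlinearity) apply rigorously. From the second identity in \eqref{120} one reads off $\om$ in terms of the three positive-definite quantities $\int |\phi_\la|^2$, $\int|\phi_\la'|^2$ and $\int |\phi_\la'|^{p+1}$; in particular
$$
\om \int |\phi_\la'|^2 \,dx = 2\int |\phi_\la|^2\,dx - \tfrac{p+3}{2(p+1)}\int |\phi_\la'|^{p+1}\,dx,
$$
so $\om<2$ is equivalent to the quantitative inequality
$$
2\int |\phi_\la'|^2\,dx - 2\int|\phi_\la|^2\,dx + \tfrac{p+3}{2(p+1)}\int|\phi_\la'|^{p+1}\,dx > 0.
$$
Thus the task is to show this combination is strictly positive, and this is where minimality must enter — the Pohozaev identities alone give equalities, not the needed strict inequality.

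The mechanism I would use is the scaling $\phi_\la \mapsto \phi_\la^\sigma$ chosen so as to preserve the constraint $\|(\phi_\la)_x\|_{L^2}^2 = \la$, and then exploit that $\sigma = 1$ is a minimum of $\sigma \mapsto I[\phi_\la^\sigma]$, forcing the first derivative to vanish and the second derivative to be nonnegative at $\sigma=1$. Concretely, take the dilation $\phi^\sigma(x) := \phi_\la(x/\sigma)$ possibly combined with an amplitude factor, tuned so that $\|(\phi^\sigma)_x\|_{L^2}^2$ stays equal to $\la$; under $x\mapsto x/\sigma$ one has $\int |(\phi^\sigma)_{xx}|^2 = \sigma^{-3}\int|\phi_\la''|^2$, $\int |\phi^\sigma|^2 = \sigma \int|\phi_\la|^2$, $\int|(\phi^\sigma)_x|^2 = \sigma^{-1}\int|\phi_\la'|^2$, and $\int |(\phi^\sigma)_x|^p (\phi^\sigma)_x = \sigma^{1-p}\int|\phi_\la'|^p\phi_\la'$, and a compensating amplitude rescale absorbs the change in $\|(\phi^\sigma)_x\|_{L^2}^2$. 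Differentiating $I[\phi^\sigma]$ in $\sigma$ at $\sigma = 1$, setting the derivative to zero, and reconciling with the Euler–Lagrange relation $\om = \tfrac1\la\int[|\phi_\la''|^2 + |\phi_\la|^2 - |\phi_\la'|^p\phi_\la']\,dx$ from Proposition \ref{prop:25} should reproduce one Pohozaev identity; and then the key input is that the derivative of $I[\phi^\sigma]$ cannot stay nonnegative as $\sigma\to\infty$ unless $\om < 2$, i.e.\ the coefficient governing the large-$\sigma$ behaviour of $I[\phi^\sigma]$ is exactly (a positive multiple of) $2-\om$. Since $\phi_\la$ is a genuine minimizer and the competitor $\phi^\sigma$ is admissible, $I[\phi^\sigma] \ge m_I(\la) = I[\phi_\la]$ for all $\sigma>0$, and if $\om \ge 2$ one would produce an admissible competitor with strictly smaller energy for $\sigma$ large (or small), a contradiction.

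The main obstacle I anticipate is bookkeeping the constraint-preserving rescaling cleanly: the naive dilation $\phi_\la(x/\sigma)$ changes $\|(\phi_\la)_x\|_{L^2}^2$, so one needs the two-parameter family $c_\sigma\,\phi_\la(x/\sigma)$ with $c_\sigma$ forced by the constraint, and then carefully tracking how each term in $I$ scales with the single remaining parameter $\sigma$. Once the correct one-parameter admissible family is in hand, extracting $2-\om>0$ from "$\sigma=1$ is a minimum'' is a short computation: it is essentially the statement that the coefficient of the dominant power of $\sigma$ (as $\sigma\to\infty$, where the $\int|\phi_\la|^2$ term, carrying the factor $\om$ after using the EL equation, competes against $\int|\phi_\la'|^2$) must have the sign consistent with $I[\phi^\sigma]\ge I[\phi_\la]$. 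For the remaining three variational problems \eqref{minIp}, \eqref{minJ}, \eqref{minJp}, the identical argument runs with the corresponding Pohozaev identities \eqref{1201}, \eqref{140}, \eqref{1401} and the corresponding Euler–Lagrange $\om$; I would simply remark that the proofs are verbatim, changing $p+1\leftrightarrow p$ in the nonlinear exponent where appropriate and using $v=\p_x^{-1}\phi$ to pass between the second- and fourth-order formulations.
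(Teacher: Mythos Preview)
Your dilation family $\phi^\sigma(x)=\sigma^{1/2}\phi_\la(x/\sigma)$ (the unique constraint-preserving choice of $c_\sigma$) does not deliver what you claim. A direct computation gives
\[
I[\phi^\sigma]=\tfrac{\sigma^{-2}}{2}\!\int|\phi_\la''|^2+\tfrac{\sigma^{2}}{2}\!\int|\phi_\la|^2-\tfrac{\sigma^{(1-p)/2}}{p+1}\!\int|\phi_\la'|^p\phi_\la',
\]
so $I[\phi^\sigma]\to+\infty$ both as $\sigma\to 0^+$ and as $\sigma\to\infty$ (for $1<p<5$ the exponent $(1-p)/2>-2$, hence the $\sigma^{-2}$ term dominates at $0$; the $\sigma^{2}$ term dominates at $\infty$). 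There is no competitor with smaller energy at the endpoints, and the asserted identification of the large-$\sigma$ coefficient with ``a positive multiple of $2-\om$'' is simply false: $\om$ does not appear in the leading $\tfrac{\sigma^2}{2}\int|\phi_\la|^2$ term, nor does any substitution of the Euler--Lagrange relation change the sign of that leading power. The first-variation $\frac{d}{d\sigma}I[\phi^\sigma]\big|_{\sigma=1}=0$ merely reproduces the first Pohozaev identity, and the second-variation inequality at $\sigma=1$ does not isolate $2-\om$ either.

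What your argument is missing is the one genuinely nontrivial input the paper uses: the strict inequality $m_I(\la)<\la$, established in Lemma~\ref{nonv} by an explicit Fourier-side construction. Once you have $I[\phi_\la]=m_I(\la)<\la=\int|\phi_\la'|^2$, combining this with the Euler--Lagrange relation $\om\la=\int|\phi_\la''|^2+\int|\phi_\la|^2-\int|\phi_\la'|^p\phi_\la'$ (Proposition~\ref{prop:25}, equivalently \eqref{150}) yields
\[
\Big(\tfrac{\om}{2}-1\Big)\la\;<\;-\tfrac{p-1}{2(p+1)}\int|\phi_\la'|^p\phi_\la'\,,
\]
and since $\int|\phi_\la'|^p\phi_\la'>0$ for a minimizer (again Lemma~\ref{nonv}), the right-hand side is negative and $\om<2$ follows. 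The Pohozaev identities alone, or scaling comparisons of the type you propose, do not produce this strict bound; the work is in knowing that the infimum lies strictly below $\la$.
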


\begin{proof}
	Let $ \phi_\la $ be  a minimizer for \eqref{minI}, so in particular $\|\phi_\la'\|_{L^2}^2=\la$. Then,  we have
	$m(\la)<\la$, as established in the proof of Lemma \ref{nonv}.  Therefore
	$$
		I(\phi_\la)=\f{1}{2}\int_\rone \left| \phi_\la '' \right|^2 +|\phi_\la|^2dx - \frac{1}{p+1}\int_{\rone} |\phi_\la'|^p\phi_\la'dx<\la= \int_{\rone} |\phi_\la'|^2dx
	$$
	Rearranging terms yields
	\begin{equation}\label{om_leq_2_1}
		\frac{1}{2}\int_\rone \left| \phi_\la '' \right|^2 +|\phi_\la|^2dx  < \int_\rone |\phi_\la'|^2dx+\frac{1}{p+1}\int_{\rone} |\phi_\la'|^p\phi_\la'dx.
	\end{equation}
	Since $ \phi_\la $ also satisfies \eqref{120}, we get
	\begin{equation}\label{om_leq_2_2}
		\frac{1}{2}\int_\rone |\phi_\la''|^2  + |\phi_\la|^2dx  = \frac{\om}{2}
		\int_\rone |\phi_\la'|^2dx + \frac{1}{2}\int_\rone |\phi_\la'|^p\phi_\la'dx.
	\end{equation}
	Combining \eqref{om_leq_2_1} and \eqref{om_leq_2_2}, we have that
	$$
		\left(\f{\om }{2}-1\right) \int_\rone |\phi_\la'|^2dx =-\f{p-1}{2(p+1)} \int_\rone |\phi_\la'|^p\phi_\la'dx.
	$$
	Recalling again that $\int_\rone |\phi_\la'|^p\phi_\la'dx>0$,
	we conclude that $ \om <2 $. 	Similarly for the minimizers of the other three variational problems.
\end{proof}

\section{Stability of the waves}  
\label{sec:4} 
  We start this section by developing the necessary general tools, which imply stability of the waves under consideration. Most of the content in Section \ref{sec:4.1} is contained in some recent papers, but we summarize it for the reader's convenience. Note that our reasoning applies equally well to both the fourth order and the second order versions of the eigenvalue problems, see \eqref{318}, \eqref{319}. 
\subsection{Instability index theory}
\label{sec:4.1} 
In this section, we present the instability index count theory, specifically applied to eigenvalue problems in the form \eqref{319}. As we have previously discussed, the main issue is whether or nor, the eigenvalue problem
\eqref{319} has non-trivial solutions $(\mu, z)$. To that end, we mostly follow the theory  developed in \cite{LZ}, in the specific case when the self-adjoint operator is $\cj=\p_x$.  More specifically,  consider a (slightly more general)  eigenvalue problem
\begin{equation}
\label{325}
\p_x \cl z=\mu z,
\end{equation}
where we  requite the following - there is  Hilbert space $\cx$ over the reals, so that
\begin{itemize}
	\item $\cl: \cx\to \cx^*$ is a bounded and symmetric operator, in the sense that $(u,v)\to \dpr{\cl u}{v}$ is  a bounded symmetric form on $\cx\times \cx$.
	\item $dim(Ker[\cl])<\infty$ and moreover, there is an $\cl$ invariant decomposition
	$$
	\cx=\cx_-\oplus Ker[\cl]\oplus \cx_+, dim(\cx_-)<\infty,
	$$
	so that for some $\de>0$, $\cl_-|_{\cx_-}\leq -\de$,  $\cl_+|_{\cx_+}\geq \de$. That is,  for every $u_\pm \in \cx_\pm$, there is  $\dpr{\cl u_-}{u_-}\leq -\de \|u_-\|_{\cx_-}^2$ and $\dpr{\cl u_+}{u_+}\geq \de \|u_+\|_{\cx_+}^2$.
\end{itemize}
Introduce the Morse index $n^-(\cl):=dim(\cx_-)$, which is equivalent to the number of negative eigenvalues of the  operator $\cl$, counted with their respective multiplicities.  Consider the generalized eigenspace $E_0=\{u\in \cx: (\p_x \cl)^k u=0,k=1,2,\ldots\}$. Clearly $Ker[\cl]\subset E_0$, so consider the complement in $E_0$ of $Ker[\cl]$. That is,
$E_0=Ker[\cl]\oplus \tilde{E}_0$. Let
$$
k_0^{\leq 0}:=\max\{dim(Z): Z\  \textup{subspace of}\  \tilde{E}_0: \dpr{\cl z}{z}< 0, z\in Z\}.
$$
Theorem 2.3, \cite{LZ}) asserts that\footnote{ Theorem 2.3, \cite{LZ} is actually much more general,
	but we state this corollary, as it is enough for us} the number of solutions of \eqref{325}, $k_{unstable}$ is estimated by
\begin{equation}
\label{a:40}
k_{unstable}\leq n^-(\cl)- k_0^{\leq 0}(\cl).
\end{equation}
In particular, and this is what we use below, if $n^-(\cl)=1$ and $k_0^{\leq 0}(\cl)\geq 1$, the   problem \eqref{325} is spectrally  stable. At this point, we point out that the spectral problem \eqref{319} conforms to this framework, once we take the Hilbert space $\cx:=H^1(\rone)\cap \dot{H}^{-1}(\rone)$ over the reals.

Let us now derive the so-called Vakhitov-Kolokolov criteria for stability\footnote{Although the original criteria and his derivation was done, strictly speaking in the NLS context, it introduces an important  quantity, which turns out to be  relevant in wide class of  Hamiltonian stability problems.}. Assume that $\Psi$ is sufficiently smooth, $\Psi'\in Ker[\cl]$ and in addition, assume $\Psi\perp Ker[\cl]$. Then, we can identify $Q:=\cl^{-1}[\Psi]$ as element of $Ker[(\p_x \cl)^2]\setminus Ker[(\p_x \cl)]\subset \tilde{E}_0$. Indeed,
$\p_x \cl Q=\Psi'$, while $(\p_x \cl)^2 Q=\p_x \cl \Psi'=\p_x \cl \Psi'=0$.  Now, if $\dpr{\cl Q}{Q}<0$, we clearly can conclude that
$k_0^{\leq 0}(\cl)\geq 1$ (which together with $n^-(\cl)=1$ would imply stability by \eqref{a:40}). On the other hand,
$\dpr{\cl Q}{Q}=\dpr{\cl \cl^{-1} \Psi}{\cl^{-1} \Psi}=\dpr{\cl^{-1} \Psi}{\Psi}$. Note that in our specific case, the eigenvalue problem \eqref{319} satisfies $\cl[\phi'_\la]=0$, as long as $\phi_\la$ is a minimizer of \eqref{41}, \eqref{51} respectively.
Thus, we have proved the following
\begin{corollary}
	\label{cor:lo98}
	Suppose that the wave $\phi_\la$ satisfies
	\begin{enumerate}
		\item $n^-(\cl_+)=1$
		\item the wave $\phi_\la$ is weakly non-degenerate,  
		\item $\dpr{\cl_+^{-1} \phi_\la}{\phi_\la}<0$.
	\end{enumerate}
	Then, the wave is strongly spectrally stable, in the sense that the eigenvalue problem \eqref{319} does not have non-trivial solutions, and in fact $\si(\p_x \cl_+)\subset i \rone$.
\end{corollary}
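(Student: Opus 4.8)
The plan is to run the instability index bound \eqref{a:40} with $\cl=\cl_+$ and the real Hilbert space $\cx=H^1(\rone)\cap\dot H^{-1}(\rone)$, and to show that under hypotheses (1)--(3) the two terms on the right of \eqref{a:40} are both equal to $1$. First I would check that $\cl_+$ fits the abstract framework of Section \ref{sec:4.1}: the form $u\mapsto\dpr{\cl_+ u}{u}$ is bounded and symmetric on $\cx$; since $\om<2$ by Corollary \ref{cor:2}, the Fourier–multiplier part $-\p_x^2-\p_x^{-2}-\om$ is bounded below by $2-\om>0$, while the multiplication term ($-p|\phi_\la'|^{p-2}\phi_\la'$ in the $I$ case, $-p|\phi_\la|^{p-1}$ in the $J$ case) is exponentially localized by Proposition \ref{prop:exp}, hence a relatively compact perturbation. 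Thus $\cl_+$ has purely positive essential spectrum, finitely many negative eigenvalues, and $0$ is either not in $\si(\cl_+)$ or an isolated eigenvalue of finite multiplicity; this provides the required $\cl_+$-invariant splitting $\cx=\cx_-\oplus Ker[\cl_+]\oplus\cx_+$ with a spectral gap, and by hypothesis (1) (consistent with Proposition \ref{prop:30}) $n^-(\cl_+)=1$.

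Next I would produce a one-dimensional negative subspace inside $\tilde E_0$. Since $\phi_\la$ is weakly non-degenerate, $\phi_\la\perp Ker[\cl_+]$; as $\cl_+$ is self-adjoint with closed range $Ker[\cl_+]^\perp$, the element $Q:=\cl_+^{-1}\phi_\la$ is well-defined in $Ker[\cl_+]^\perp$. Differentiating the profile equation \eqref{347} (resp. \eqref{352}) in $x$ gives $\cl_+\phi_\la'=0$, i.e.\ $\phi_\la'\in Ker[\cl_+]$, and $\phi_\la'\neq0$ because a non-trivial $\phi_\la\in L^2$ cannot be constant. Hence $\p_x\cl_+ Q=\p_x\phi_\la=\phi_\la'\neq0$ while $(\p_x\cl_+)^2 Q=\p_x\cl_+\phi_\la'=0$, so $Q\in Ker[(\p_x\cl_+)^2]\setminus Ker[\p_x\cl_+]\subset\tilde E_0$. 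Finally $\dpr{\cl_+ Q}{Q}=\dpr{\phi_\la}{\cl_+^{-1}\phi_\la}=\dpr{\cl_+^{-1}\phi_\la}{\phi_\la}<0$ by hypothesis (3), so the line $span\{Q\}$ witnesses $k_0^{\leq0}(\cl_+)\geq1$. Plugging into \eqref{a:40} yields $k_{unstable}\leq n^-(\cl_+)-k_0^{\leq0}(\cl_+)\leq 1-1=0$, so \eqref{319} has no solution with $\Re\la>0$.

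To upgrade this to $\si(\p_x\cl_+)\subset i\rone$, I would first observe that $\p_x\cl_+$ is a relatively compact perturbation of the skew-adjoint Fourier multiplier with symbol $i\xi(\xi^2+\xi^{-2}-\om)$, whose spectrum lies in $i\rone$; hence $\si_{ess}(\p_x\cl_+)\subset i\rone$. For the discrete spectrum I would use the Hamiltonian structure: $\p_x\cl_+$ is the product $JS$ of the skew-adjoint $J=\p_x$ and the self-adjoint $S=\cl_+$, so its point spectrum is invariant under $\la\mapsto\bar\la$ (real coefficients) and under $\la\mapsto-\la$ (since $-\cl_+\p_x=(\p_x\cl_+)^*$ is conjugate, via $\cl_+$ on $Ker[\cl_+]^\perp$, to $-\p_x\cl_+$). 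Since the previous step rules out eigenvalues with $\Re\la>0$, and the symmetry then rules them out for $\Re\la<0$, all eigenvalues sit on $i\rone$; combined with the essential spectrum computation this gives $\si(\p_x\cl_+)\subset i\rone$. Alternatively this inclusion follows directly from the sharper form of Theorem 2.3 in \cite{LZ}.

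I expect the main obstacle to be the bookkeeping in the first paragraph together with the essential-spectrum claim: rigorously verifying that the nonlocal, exponentially localized perturbation indeed yields the invariant gap decomposition with $0$ isolated in $\si(\cl_+)$, a finite-dimensional generalized kernel $\tilde E_0$, and the relative compactness needed to identify $\si_{ess}(\p_x\cl_+)$. Once that functional-analytic setup is in place, the identification of $Q$ and the sign computation $\dpr{\cl_+ Q}{Q}<0$ are routine, and the index inequality closes the argument.
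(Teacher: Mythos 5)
Your proposal is correct and follows essentially the same route as the paper: the paper derives the corollary directly from the Lin--Zeng count \eqref{a:40} by exhibiting $Q=\cl_+^{-1}\phi_\la\in Ker[(\p_x\cl_+)^2]\setminus Ker[\p_x\cl_+]$ with $\dpr{\cl_+ Q}{Q}=\dpr{\cl_+^{-1}\phi_\la}{\phi_\la}<0$, so that $k_0^{\leq 0}\geq 1$ and $k_{unstable}\leq 1-1=0$. Your additional verification of the abstract framework and the spectral symmetry argument for $\si(\p_x\cl_+)\subset i\rone$ only fill in details the paper leaves implicit by citing Theorem 2.3 of \cite{LZ}.
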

\noindent This corollary is our main tool for establishing strong spectral stability for our waves $\phi_\la$.

Another useful observation is 
\begin{proposition}
	\label{prop:lk} 
	Assume weak non-degeneracy, i.e. $\phi\perp Ker[\cl_+]$. Then, the condition \eqref{nond}, that is $\dpr{\cl_+^{-1} \phi}{\phi}\neq 0$ guarantees that 
	the zero eigenvalue for $\p_x \cl$ has algebraic multiplicity two. 
\end{proposition}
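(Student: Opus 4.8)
The plan is to compute the generalized kernel of the operator $\p_x\cl_+$ from \eqref{319} explicitly. Write $E_0:=\bigcup_{k\ge 1}Ker[(\p_x\cl_+)^k]$ for the generalized eigenspace at $0$; I will show that $E_0=Ker[\cl_+]\oplus span[Q]$ with $Q:=\cl_+^{-1}\phi$, so that $\dim E_0=\dim Ker[\cl_+]+1$, which equals two once the zero eigenvalue of $\cl_+$ is geometrically simple (i.e. $Ker[\cl_+]=span[\phi']$, as for our non-degenerate waves). Throughout I use that, by Proposition \ref{prop:30}, $\cl_+$ is self-adjoint and bounded below on $L^2(\rone)$ with $0$ an isolated eigenvalue of finite multiplicity, so its range is closed and equals $Ker[\cl_+]^\perp$; that $\phi'\in Ker[\cl_+]$ by differentiating the profile equation; and that $\p_x$ is injective on $\cx=H^1(\rone)\cap\dot{H}^{-1}(\rone)\subset L^2(\rone)$, whence $Ker[\p_x\cl_+]=Ker[\cl_+]$ and the geometric multiplicity of the zero eigenvalue of $\p_x\cl_+$ is $\dim Ker[\cl_+]$.

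First I would build the length-two Jordan chain. By weak non-degeneracy $\phi\perp Ker[\cl_+]$, so $\phi$ lies in the (closed) range of $\cl_+$ and $Q:=\cl_+^{-1}\phi$, taken orthogonal to $Ker[\cl_+]$, is well defined; the bootstrapping of Proposition \ref{prop:nm} gives $Q\in H^2(\rone)\cap\dot{H}^{-2}(\rone)\subset\cx$. Then $\p_x\cl_+ Q=\p_x\phi=\phi'$, hence $(\p_x\cl_+)^2 Q=\p_x(\cl_+\phi')=0$, while $\p_x\cl_+ Q=\phi'\ne 0$; so $Q\in E_0\setminus Ker[\p_x\cl_+]$ and $Ker[\cl_+]\oplus span[Q]\subseteq E_0$, giving algebraic multiplicity at least $\dim Ker[\cl_+]+1$.

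The heart of the matter is that the chain stops there, and this is exactly where \eqref{nond} enters. Suppose to the contrary that $E_0$ were strictly larger; normalizing a Jordan chain of length $\ge 3$ (using $Ker[\p_x\cl_+]=Ker[\cl_+]=span[\phi']$ together with $\p_x\cl_+ Q=\phi'$) one produces $w\in\cx$ with $\p_x\cl_+ w=Q$. Applying $\p_x^{-1}$, which is legitimate because $Q\in\dot{H}^{-1}(\rone)$ and an additive constant of integration acts trivially on $\cx$, gives $\cl_+ w=\p_x^{-1}Q$; solvability of this equation for the self-adjoint Fredholm operator $\cl_+$ forces $\p_x^{-1}Q\perp Ker[\cl_+]$, in particular $\dpr{\p_x^{-1}Q}{\phi'}=0$. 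But integrating by parts and using $\p_x^{-1}\phi'=\phi$ (valid since $\phi$ vanishes at $\pm\infty$) gives $\dpr{\p_x^{-1}Q}{\phi'}=-\dpr{Q}{\phi}=-\dpr{\cl_+^{-1}\phi}{\phi}$, which is nonzero by \eqref{nond} --- a contradiction. Hence $E_0=Ker[\cl_+]\oplus span[Q]$, and the algebraic multiplicity of the zero eigenvalue of $\p_x\cl_+$ is $\dim Ker[\cl_+]+1=2$.

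I expect the only real difficulty to be bookkeeping rather than conceptual: carefully justifying the manipulations with $\p_x^{-1}$ on the weighted spaces $H^1\cap\dot{H}^{-1}$ and $H^2\cap\dot{H}^{-2}$ (the membership $Q\in\dot{H}^{-1}$, the vanishing of the boundary terms, and the fact that constants represent the zero functional on $\cx$), and the reduction of an arbitrary higher Jordan chain to the normal form $\p_x\cl_+ w=Q$. The clean identity $\dpr{\cl_+^{-1}\phi}{\phi}=-\dpr{\p_x^{-1}Q}{\phi'}$, which turns the hypothesis \eqref{nond} into the failure of the Fredholm solvability condition for $\cl_+ w=\p_x^{-1}Q$, is the pivot of the whole argument.
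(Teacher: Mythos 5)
Your argument is correct and is essentially the paper's own proof: both construct the length-two chain $\phi'$, $\cl_+^{-1}\phi$, then rule out a third chain element $\zeta$ with $\p_x\cl_+\zeta=\cl_+^{-1}\phi$ by writing $\cl_+^{-1}\phi=q'$ and showing the solvability of $\cl_+\zeta=q$ forces $0=\dpr{q}{\phi'}=-\dpr{\cl_+^{-1}\phi}{\phi}$, contradicting \eqref{nond}. Your added remark that the count "exactly two" also uses $Ker[\cl_+]=span[\phi']$ is a fair point of bookkeeping that the paper leaves implicit, but it does not change the argument.
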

\begin{proof}
	We have already discussed that the zero eigenvalue has algebraic multiplicity  two, since $\p_x \cl_+[\phi']=0$, $(\p_x \cl_+)^2  [\cl_+^{-1} \phi]= \p_x \cl_+[\phi']=0$. Assume that there is a third element in the chain, say $\zeta$. It must be that 
	$\p_x \cl_+ \zeta=\cl_+^{-1} \phi$. This last equation is not solvable, unless $\cl_+^{-1} \phi=q'$ for some $q$. Assuming that, we must have $\cl_+ \zeta =q$. Taking dot product with $\phi'$, it follows that $q\perp \phi'$. Then, 
	$$
	0=\dpr{q}{\phi'}=-\dpr{q'}{\phi}=-\dpr{\cl_+^{-1} \phi}{\phi},
	$$
	a contradiction with \eqref{nond}. 
\end{proof}

\subsection{Weak non-degeneracy of the waves} 
Our first order of business is to show that $\phi_\la\perp Ker[\cl_+]$. Let us work with the second order version, for which $\cl_+=-\p_x^2-\p_x^{-2}- \om-p|\phi_\la|^{p-1}$, the other one being similar.  Take any element $\Psi\in Ker[\cl_+], \|\Psi\|_{L^2}=1$.  Note that by Proposition \ref{prop:30}, we have that $\cl_+|_{\{\phi_\la\}^\perp}\geq 0$. It follows that 
$\Psi-\la^{-1} \dpr{\Psi}{\phi_\la} \phi_\la\perp \phi_\la$, since by construction $\|\phi_\la\|^2=\la$. Thus, 
\begin{eqnarray*}
0\leq \dpr{\cl_+[\Psi-\la^{-1} \dpr{\Psi}{\phi_\la} \phi_\la]}{\Psi-\la^{-1} \dpr{\Psi}{\phi_\la} \phi_\la}=\la^{-2} \dpr{\Psi}{\phi_\la}^2 \dpr{\cl_+ \phi_\la}{\phi_\la}\leq 0,
\end{eqnarray*} 
where in the last inequality we have used \eqref{lplus}. We conclude that $\dpr{\Psi}{\phi_\la}=0$, otherwise we have a contradiction in the above chain of inequalities.  

\subsection{General spectral stability lemma}
We have the following general result. 
\begin{lemma}
	\label{stab:10} 
	Let $\ch$ be a self-adjoint operator on a Hilbert space $H$, so that $\ch|_{\{\xi_0\}^\perp}\geq 0$ for some vector $\xi_0: \xi_0\perp Ker[\ch], \|\xi_0\|=1$. Assume that $\dpr{\ch \xi_0}{\xi_0}\leq 0$. Then, 
	$$
	\dpr{\ch^{-1} \xi_0}{\xi_0}\leq 0.
	$$  
\end{lemma}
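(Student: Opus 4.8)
The plan is to exhibit an explicit test vector lying in $\{\xi_0\}^\perp$ on which the quadratic form of $\ch$ is negative \emph{unless} the claimed inequality already holds, and then invoke the hypothesis $\ch|_{\{\xi_0\}^\perp}\ge 0$ to rule that out. First I would make sense of $\eta:=\ch^{-1}\xi_0$. Since $\xi_0\perp Ker[\ch]$ and, in the situations we apply this to (namely $\ch=\cl_+$ as in Proposition \ref{prop:30}), $0$ is an isolated point of the spectrum with finite-dimensional kernel, the operator $\ch$ is boundedly invertible on $(Ker[\ch])^\perp$; thus $\xi_0$ lies in its range and $\eta\in D(\ch)$ satisfies $\ch\eta=\xi_0$. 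I would then record the identities that make the computation go through, all immediate from self-adjointness: $\dpr{\ch^{-1}\xi_0}{\xi_0}=\dpr{\eta}{\xi_0}=\dpr{\ch\eta}{\eta}$, $\dpr{\ch\xi_0}{\eta}=\dpr{\xi_0}{\ch\eta}=\|\xi_0\|^2=1$, together with the hypothesis $\dpr{\ch\xi_0}{\xi_0}\le 0$.

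Next I would argue by contradiction: assume $c:=\dpr{\ch^{-1}\xi_0}{\xi_0}>0$ and put $u:=\xi_0-c^{-1}\eta\in D(\ch)$. The first identity gives $\dpr{u}{\xi_0}=1-c^{-1}\dpr{\eta}{\xi_0}=0$, so $u\in\{\xi_0\}^\perp$ and hence $\dpr{\ch u}{u}\ge 0$. On the other hand, expanding the bilinear form and substituting $\ch\eta=\xi_0$ together with the identities above yields, after the routine cancellation, $\dpr{\ch u}{u}=\dpr{\ch\xi_0}{\xi_0}-c^{-1}$, which is strictly negative since $\dpr{\ch\xi_0}{\xi_0}\le 0$ and $c>0$. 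This contradiction forces $c\le 0$, i.e. $\dpr{\ch^{-1}\xi_0}{\xi_0}\le 0$, as claimed.

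The only genuinely delicate point is the well-definedness of $\ch^{-1}\xi_0$: one needs $\xi_0$ in the \emph{range} of $\ch$, not merely orthogonal to $Ker[\ch]$. In our applications this is free, since the bottom of $\si(\cl_+)$ consists of finitely many eigenvalues and $0$ is isolated among them; for a fully general statement one could add ``$0$ isolated in $\si(\ch)$'' to the hypotheses, or avoid the issue altogether by replacing the explicit $u$ with a minimizer of $w\mapsto\dpr{\ch w}{w}$ over $\{\dpr{w}{\xi_0}=1\}$, using $w=\xi_0$ as a competitor to see the minimum is $\le\dpr{\ch\xi_0}{\xi_0}\le 0$ and reading off $c$ from the associated Euler--Lagrange relation. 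I would nonetheless prefer the explicit-test-vector route, as it sidesteps any discussion of attainment of the infimum.
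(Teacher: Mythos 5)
Your proof is correct and takes essentially the same route as the paper: the paper tests the quadratic form on $\eta_0=\ch^{-1}\xi_0-\dpr{\ch^{-1}\xi_0}{\xi_0}\,\xi_0$, which is just $-c^{-1}$ times your vector $u$, and obtains $0\leq \dpr{\ch\eta_0}{\eta_0}\leq -\dpr{\ch^{-1}\xi_0}{\xi_0}$ directly rather than by contradiction (thereby avoiding the division by $c$). Your caveat about the well-definedness of $\ch^{-1}\xi_0$ (range versus orthogonality to the kernel) is a fair point that the paper only addresses in a passing remark.
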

{\bf Remark:} Note that the condition $\xi_0\perp Ker[\ch]$ guarantees that $\ch^{-1} \xi_0$ is well-defined. 
\begin{proof}
Due to the self-adjointness, $\ch^{-1}: Ker[\ch]^\perp \to Ker[\ch]^\perp$. Consider 
$$
\eta_0:=\ch^{-1}\xi_0 - \dpr{\ch^{-1} \xi_0}{\xi_0} \xi_0\perp \xi_0.
$$
 Note  that $\dpr{\ch^{-1} \xi_0}{\xi_0}$ is a real. Thus, 
\begin{eqnarray*}
	0 &\leq & \dpr{\ch\eta_0}{\eta_0}=\dpr{\ch[\ch^{-1}\xi_0 - \dpr{\ch^{-1} \xi_0}{\xi_0} \xi_0]}{\ch^{-1}\xi_0 - \dpr{\ch^{-1} \xi_0}{\xi_0} \xi_0}=\\
	&=& \dpr{ \xi_0 - \dpr{\ch^{-1} \xi_0}{\xi_0} \ch \xi_0]}{\ch^{-1}\xi_0 - 
		\dpr{\ch^{-1} \xi_0}{\xi_0} \xi_0}= \\
	&=& -\dpr{\ch^{-1} \xi_0}{\xi_0} + \dpr{\ch^{-1} \xi_0}{\xi_0}^2 \dpr{\ch \xi_0}{\xi_0} \leq -\dpr{\ch^{-1} \xi_0}{\xi_0}. 
\end{eqnarray*} 
	Thus, $\dpr{\ch^{-1} \xi_0}{\xi_0}\leq 0$.

\end{proof}

\subsection{The proof of Theorem \ref{theo:10}} 
The proof of Theorem \ref{theo:10} consists of applying Lemma \ref{stab:10} to $\ch=\cl_+$ and $\xi_0:=\la^{-1/2} \phi_\la$. We have shown that $\cl_+|_{\{\phi_\la\}^\perp}\geq 0$ and \eqref{lplus} establishes $\dpr{\cl_+  \phi_\la}{\phi_\la}\leq 0$.  This verifies all the assumptions in Lemma \ref{stab:10}, which implies $\dpr{\cl_+^{-1} \phi}{\phi}\leq 0$.   Assuming \eqref{nond}, means that $\dpr{\cl_+^{-1} \phi}{\phi}<0$, which then feeds into Corollary \ref{cor:lo98} to imply that the stability.

\end{document}